\newtheorem{thm}{Theorem}[section]
\newtheorem{cor}[thm]{Corollary}
\newtheorem{lem}[thm]{Lemma}
\newtheorem{prop}[thm]{Proposition}
\theoremstyle{definition}
\newtheorem{defn}[thm]{Definition}
\theoremstyle{remark}
\numberwithin{equation}{section}
\newcommand{\A}{\mathcal{A}}
\begin{document}

\title[]{Some $s$-numbers of an integral operator of Hardy type in Banach function spaces}%

\author{ David Edmunds, Amiran Gogatishvili,  Tengiz Kopaliani and Nino Samashvili}

\address{ David Edmunds\\
University of Sussex\\
Department of Mathematics\\
Pevensey 2, North-South Road\\
Brighton BN1 9QH,
United Kingdom}
\email{davideedmunds@aol.com}

\address{Amiran Gogatishvili \\
Institute of Mathematics of the Academy of Sciences of the Czech Republic \\
\'Zitna 25 \\
115 67 Prague 1, Czech Republic}
 \email{gogatish@math.cas.cz}

\address{Tengiz Kopaliani \\
Faculty of Exact and Natural Sciences\\
I. Javakhishvili Tbilisi State University\\
 University St. 2\\
 0143 Tbilisi, Georgia}
\email{tengiz.kopaliani@tsu.ge}

\address{ Nino Samashvili\\
Faculty of Exact and Natural Sciences\\
I. Javakhishvili Tbilisi State University\\
 University St. 2\\
 0143 Tbilisi, Georgia}
\email{n.samashvili@gmail.com}

\thanks{ The research was in part supported by the grants no.~31/48 and   no.~DI/9/5-100/13 of the Shota Rustaveli National Science Foundation.
 The research of A.Gogatishvili was partially supported by the grant P201/13/14743S of the Grant agency of the Czech Republic and RVO: 67985840}%
\subjclass[2000]{35P30, 46E30, 46E35, 47A75 47B06, 47B10, 47B40, 47G10}%
\keywords{Hardy type operators, Banach function spaces, s-numbers, compact linear operators}%

\begin{abstract}

Let $s_{n}(T)$ denote the $n$th approximation, isomorphism, Gelfand, Kolmogorov or Bernstein number
of the Hardy-type integral operator $T$ given by
 $$
 Tf(x)=v(x)\int_{a}^{x}u(t)f(t)dt,\,\,\,x\in(a,b)\,\,(-\infty<a<b<+\infty)
 $$
 and mapping a Banach function space  $E$ to itself. 
 We investigate some geometrical properties of $E$  for which
 $$
C_{1}\int_{a}^{b}u(x)v(x)dx \leq\limsup\limits_{n\rightarrow\infty}ns_{n}(T)
 \leq \limsup\limits_{n\rightarrow\infty}ns_{n}(T)\leq  C_{2}\int_{a}^{b}u(x)v(x)dx
$$
under appropriate conditions on $u$ and $v.$  The constants $C_{1},C_{2}>0$ depend only on the space $E.$
\end{abstract}
\maketitle

   \section{Introduction}
	
	The  $s$-numbers such as approximation, isomorphism, Bernstein, Gelfand and Kolmogorov numbers  $s_n(T)$  of a compact linear map $T$ acting between Banach spaces have proved to give a very useful measure of ‘how compact’ the map is.  For a fine survey of these numbers and their interactions with various parts of mathematics we refer to the monumental book \cite{P} by Pietsch.  The wealth of applications of these ideas has naturally led to the detailed study of $s$-numbers of particular maps, prominent among which are  the weighted Hardy-type operators $ T$, 	for which sharp upper and lower estimates of the approximation numbers  in  $L^p(a,b)$ spaces, $(1\le p\le \infty)$ are investigated  in \cite{EEH1} \cite {EEH2}, \cite{EHL1},  \cite{EHL2} and \cite{NS}.  For  various other $s$-numbers see  \cite{EL1} and \cite{EL2} and the recent book \cite{LE}. When $v=u=1$  (i.e. the non-weighted case) the
problem of the estimation of approximation numbers  for the Hardy operator  acting between  variable exponent Lebesgue spaces   $L^{p(\cdot)}(a,b)$   was  considered  in \cite{ELN}: see the   recent books \cite{LE} and  \cite{ELM}. In Banach function spaces, estimates of approximation numbers were considered in \cite{LS}.
	
	Our purpose in this paper is to  study  $s$-numbers for a weighted Hardy-type  operator $T$ acting in a Banach function space $E$.   Under  some geometrical assumptions on $E$, and on the weights $u$,  $v$, we obtain two-sided estimates for its approximation,   isomorphism,  Bernstein, Gelfand and Kolmogorov numbers.  Our methods of proof are similar to those of \cite{LE} and are   based on the  extension of the estimates  of the function $\A$ (see Section 4) to Banach function spaces under certain geometrical assumptions.

	The paper is organized as follows. Section \ref{s2} contains notation, preliminaries  and formulation of the main results, while in Section \ref{s3}
we  present an application to Lebesgue spaces with variable exponent and in Section \ref{s4}  properties of  the function  $\A$ are established. Estimates of $s$-numbers of the operator are 
 given in Section \ref{s5}. Finally, asymptotic estimates  and the proof of the main result  are  given in Section~\ref{s6}.

	\section{Notation, definitions and Preliminaries}	\label{s2}

 Let $L(I)$ be the space of all Lebesgue-measurable real functions on $I=(a,b),$ where $-\infty<a<b<+\infty.$
 A Banach subspace $E$ of $L(I)$ is said to be a Banach function space (BFS)  if:

1) the norm $\|f\|_{E}$ is defined for every measurable function
$f$ and $f\in E$ if and only if $\|f\|_{E}<\infty:\,\|f\|_{E}=0$
if and only if $f=0$ a.e.;

2) $\||f|\|_{E}=\|f\|_{E}$ for all $f\in E$;

3) if $0\leq f\leq g$ a.e., then $\|f\|_{E}\leq\|g\|_{E}$;

4) if $0\leq f_{n}\uparrow f$ a.e.,then $\|f_{n}\|_{E}\uparrow
\|f\|_{E}$;

5) $ L^{\infty}(I)\subset E\subset L^{1}(I).$



Let $J$ be an arbitrary interval of $I.$ By $E(J)$ we denote the "restriction" of the space $E$
to $J;$ $E(J)=\{f\chi_{J}:\,\,f\in E\}$, with the norm $\|f\|_{E(J)}=\|f\chi_{J}\|_{E}.$

 Given a Banach function space $E,$ its associate space $E'$ consists of those $g\in S$ such that $f\cdot g
\in L^{1}$ for every $f\in E$ with  norm $\|g\|_{E'}=\sup \left\{\|f \cdot
g\|_{L^{1}}:\,\,\|f\|_{E}\leq1\,\, \right\}$. $E'$ is a BFS on $I$ and a closed norm fundamental subspace of the conjugate space
$E^{\ast}$.

We say that the space $E$ has absolutely continuous norm (AC-norm) if for all $f\in E,$
$\|f\chi_{X_{n}}\|_{E}\rightarrow0$ for every sequence of measurable  sets
$\{X_{n}\}\subset I$ such that
$\chi_{X_{n}}\rightarrow0$ a.e. Note that the H\"{o}lder inequality
$$
\int_{I}f(x)g(x)dx\leq\|f\|_{E}\|g\|_{E'}
$$
holds for all $f\in E$ and $g\in E'$ and is sharp (for more details we refer to \cite{BS}).

Let $E$ be a Banach space with dual $E^*$;  the value of  $x^*$ at $x \in E$ is denoted by $(x,x^*)_X $
or $(x,x^*)$ . 

We recall that $E$  is said to be strictly convex if whenever $x,y \in E$ are such that  $x 
\not = y$  and $\|x\| = \|y\| = 1$, and $\lambda \in  (0,1)$, then $\|\lambda x+(1−\lambda )y\| < 1.$
This simply means that the unit sphere in $E$ does not contain any line segment.

 By $\Pi$ we denote the family of all  sequences $\mathcal{Q}=\{I_{i}\}$ of disjoint  intervals  in
$I$ such that
$I=\cup_{I_{i}\in\mathcal{Q}}I_{i}.$ We ignore the
difference in notation caused by a null set.

 Everywhere in the sequel by $l_{\mathcal{Q}},\,\,(\mathcal{Q}\in \Pi)$ we denote  a Banach sequence
space (BSS) (indexed by a partition $\mathcal{Q}=\{I_{i}\}$ of $I$),
 meaning that axioms 1)-4) are satisfied with respect
to the counting measure, and let $\{e_{I_{i}}\}$ denote the standard unit vectors
in $l_{\mathcal{Q}}.$

Throughout the paper we denote by $C,C_1, C_2$ various positive constants
independent of appropriate quantities and not necessarily the
same at each occurrence. By $A\approx B$ we mean that 
$0<C_1\leq A/B\leq C_2<\infty$ for some $C_1,C_2$.

\begin{defn} \label{def1.1}
Let $l=\{l_{\mathcal{Q}}\}_{\mathcal{Q}\in\Pi}$ be a family of BSSs. A BFS E is said to satisfy a uniform upper (lower)
$l-$estimate if there exists a constant $C>0$ such that for every $f\in E$ and $\mathcal{Q}\in\Pi$ we have
$$
\|f\|_{E}\leq
C\|\sum_{I_{i}\in\mathcal{Q}}\|f\chi_{I_{i}}\|_{E}\cdot
e_{I_{i}}\|_{l_{\mathcal{Q}}}\,\,
\left(\|\sum_{I_{i}\in\mathcal{Q}}\|f\chi_{I_{i}}\|_{E}\cdot
e_{I_{i}}\|_{l_{\mathcal{Q}}}\leq C\|f\|_{E}\right).
$$
\end{defn}

Definition \ref{def1.1} was introduced  in \cite{KO}. The idea behind it is simply to generalize the following property of the Lebesgue norm:
$$
\|f\|_{L^{p}}^{p}=\sum_{i}\|f\chi_{\Omega_{i}}\|_{L^{p}}^{p}
$$
for a partition of $\mathbb{R}^{n}$ into measurable sets $\Omega_{i}.$  The notions of uniform upper (lower) $l-$estimates, when
$l_{\mathcal{Q}_{1}}=l_{\mathcal{Q}_{2}}$ for all $\mathcal{Q}_{1},\mathcal{Q}_{2}\in\Pi$, were introduced by
Berezhnoi in \cite{BE}.

 Note that if a BFS  $E$ simultaneously satisfies upper and lower $l=\{l_{\mathcal{Q}}\}_{\mathcal{Q}\in\Pi}$ estimates, then there exists a constant $C>0$ such that, for any $f\in E$ and
 $\mathcal{Q}\in\Pi$,
\begin{equation}\label{min}
\frac{1}{C}\|f\|_{E}\leq\left\|\sum_{I_{i}\in\mathcal{Q}}
\frac{\|f\chi_{I_{i}}\|_{E}}{\|\chi_{I_{i}}\|_{E}} \cdot
\chi_{I_{i}}\right\|_{E}\leq C \|f\|_{E}.
\end{equation}
Note also that if $E$ simultaneously satisfies upper and lower $l=\{l_{\mathcal{Q}}\}_{\mathcal{Q}\in\Pi}$ estimates then $E'$
simultaneously satisfies upper and lower $l'=\{l'_{\mathcal{Q}}\}_{\mathcal{Q}\in\Pi}$ estimates (see \cite{KO}).

We  investigate properties of the Hardy-type operator  of the form
$$
Tf(x)=T_{a,I,u,v}f(x)=v(x)\int_{a}^{x}f(t)u(t)dt,
$$
where $u$ and $v$ are given real valued nonnegative  functions with $|\{x:\,u(x)=0\}|=|\{x:\,v(x)=0\}|=0$  as  a mapping between BFS (by $|\cdot|$ we denote Lebesgue measure).   This operator appears naturally in the theory of differential  equations and it is important to establish when operators of this kind  have properties such as boundedness, compactness, and to estimate their eigenvalues, or their approximation numbers.  We shall assume that  
\begin{align}\label{1.1}
u\chi_{(a,x)}\in E' \\
\intertext{and}
 \chi_{(x,b)}\in E \label{1.2}
\end{align}
 whenever $a<x<b.$

In \cite{KO} the following was proved.
\begin{thm} \label{t1.1}
Let  $E$ and $F$ be BFSs  with the following property:  there exists a family of BSS $l=\{l_{\mathcal{Q}}\}_{\mathcal{Q}\in\Pi}$ such that $E$ satisfies a uniform lower $l$-estimate and $F$ a uniform upper $l$-estimate. Suppose that \eqref{1.1} and \eqref{1.2} hold.  Then $T$ is a bounded   operator from $E$ into $F$ if and only if
$$
\sup_{a<t<b}A(t)=\sup_{a<t<b}\|v\chi_{(t,b)}\|_{F}\|u\chi_{(a,t)}\|_{E'}<\infty.
$$
\end{thm}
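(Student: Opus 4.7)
The plan is to treat the two implications of the iff separately: necessity is a short test-function argument, while sufficiency is the substantive half and relies essentially on the $l$-estimates together with a geometric (dyadic-in-$\Phi$) partition of $I$.

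\textbf{Necessity.} Assume $T \colon E \to F$ is bounded with norm $M$. Fix $t \in (a,b)$ and let $f \in E$ be nonnegative with $\operatorname{supp} f \subseteq (a,t)$. For every $x \in (t,b)$ one has $Tf(x) = v(x)\int_a^t f u$, i.e.\ $(Tf)\chi_{(t,b)} = \bigl(\int_a^t f u\bigr)\,v\chi_{(t,b)}$. The lattice property of $F$ yields
$$
\Bigl(\int_a^t f u\Bigr)\,\|v\chi_{(t,b)}\|_F \le \|Tf\|_F \le M\,\|f\|_E.
$$
Taking the supremum over $f$ subject to $\|f\|_E \le 1$ and $\operatorname{supp} f \subseteq (a,t)$, and invoking the sharpness of H\"older's inequality in a BFS, the left-hand supremum equals $\|u\chi_{(a,t)}\|_{E'}\,\|v\chi_{(t,b)}\|_F = A(t)$, so $\sup_t A(t) \le M$.

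\textbf{Sufficiency.} Set $K = \sup_t A(t) < \infty$ and $\Phi(t) = \|u\chi_{(a,t)}\|_{E'}$. By \eqref{1.1} together with the absolute continuity of the integral, $\Phi$ is continuous and non-decreasing, so one can build a partition $\mathcal{Q} = \{I_k\} \in \Pi$ with $I_k = (x_{k-1},x_k)$ and $\Phi(x_k) = 2^k \Phi_0$ (the index range is finite or bi-infinite depending on the behaviour of $\Phi$ at the endpoints). The upper $l$-estimate on $F$ gives
$$
\|Tf\|_F \le C_F \Bigl\|\sum_k \|Tf\chi_{I_k}\|_F\,e_{I_k}\Bigr\|_{l_{\mathcal{Q}}}.
$$
For $x \in I_k$ split $Tf(x) = v(x) S_{k-1} + v(x)\int_{x_{k-1}}^{x} fu$ with $S_{k-1} := \int_a^{x_{k-1}} fu$. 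Applying H\"older interval-by-interval gives $|S_{k-1}| \le \sum_{j<k} \|f\chi_{I_j}\|_E \|u\chi_{I_j}\|_{E'}$, while the local piece satisfies $|v(x)\int_{x_{k-1}}^x fu| \le v(x)\|f\chi_{I_k}\|_E \|u\chi_{I_k}\|_{E'}$. The geometric choice of $\mathcal{Q}$ yields $\|u\chi_{I_j}\|_{E'} \asymp 2^j \Phi_0$, and the $A$-condition at $x_{k-1}$ gives $\|v\chi_{I_k}\|_F \le K/\Phi(x_{k-1}) \asymp K \cdot 2^{-k}/\Phi_0$, whence
$$
\|Tf\chi_{I_k}\|_F \lesssim K \sum_{j \le k} 2^{j-k}\,\|f\chi_{I_j}\|_E.
$$

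The remaining step is to push this per-$k$ bound through the $l_{\mathcal{Q}}$-norm and then to invoke the lower $l$-estimate on $E$. Writing $b_j = \|f\chi_{I_j}\|_E$, the required inequality is
$$
\Bigl\|\sum_k \Bigl(\sum_{j \le k} 2^{j-k} b_j\Bigr) e_{I_k}\Bigr\|_{l_{\mathcal{Q}}} \le C\,\Bigl\|\sum_j b_j e_{I_j}\Bigr\|_{l_{\mathcal{Q}}},
$$
i.e.\ a discrete Hardy-type inequality on the abstract BSS $l_{\mathcal{Q}}$. The kernel $2^{j-k}\mathbf{1}_{j \le k}$ has row- and column-sums bounded by $2$, so the bound holds on $\ell^1$ and $\ell^\infty$; transferring it to $l_{\mathcal{Q}}$ must be done via the lattice/ideal axioms of a BSS (e.g.\ dominating the geometric convolution by a running maximum $\max_{j \le k} b_j$ and appealing to a maximal-function bound in $l_{\mathcal{Q}}$). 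This last estimate, for a sequence space that is not a priori shift-invariant, is the main technical obstacle of the proof. Granting it, the lower $l$-estimate on $E$ supplies $\|\sum_j b_j e_{I_j}\|_{l_{\mathcal{Q}}} \le C_E \|f\|_E$, and chaining the inequalities produces $\|Tf\|_F \le C\,K\,\|f\|_E$, completing the proof.
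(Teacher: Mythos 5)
First, a point of reference: the paper does not prove this statement at all --- it is quoted from \cite{KO} --- so there is no internal proof to compare against and your argument must stand on its own. Your necessity half does: it is the standard test-function argument, and the ``sharpness of H\"older'' you invoke is nothing more than the definition of the associate norm, so $\sup_t A(t)\le\|T\|$ is correct (and, as you note implicitly, uses neither $l$-estimate).

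The sufficiency half has a genuine gap, and it is precisely the step you flag and then ``grant''. Your $f$-independent partition by dyadic levels of $\Phi(t)=\|u\chi_{(a,t)}\|_{E'}$ forces you to carry the full history $\int_a^{x_{k-1}}fu$ as a sum over all earlier blocks, which leads to the discrete convolution inequality
$$
\Bigl\|\sum_k\Bigl(\sum_{j\le k}2^{j-k}b_j\Bigr)e_{I_k}\Bigr\|_{l_{\mathcal{Q}}}\le C\Bigl\|\sum_jb_je_{I_j}\Bigr\|_{l_{\mathcal{Q}}}.
$$
This is not a technicality: for a general BSS it is false (take $l_{\mathcal{Q}}=\ell^1(w)$ with $w_k=3^k$ and $b=e_0$; the left side diverges), and the running-maximum reduction fails on the same example, since nothing in the axioms of a BSS gives shift-invariance or a Schur-type transference. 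Nothing in the hypotheses of the theorem obviously rules such $l_{\mathcal{Q}}$ out, so the argument cannot be completed as written. The way proofs of this type actually close (Berezhnoi's scheme, which \cite{KO} follows) is to choose the partition \emph{adapted to $f$}: define $x_k$ by $\int_a^{x_k}|f|u=2^k c_0$. Then for $x\in I_k$ the entire history satisfies $\int_a^x|f|u\le 4\int_{I_{k-1}}|f|u\le 4\|f\chi_{I_{k-1}}\|_E\|u\chi_{(a,x_{k-1})}\|_{E'}$, so $A(x_{k-1})\le K$ gives $\|Tf\chi_{I_k}\|_F\le 4K\|f\chi_{I_{k-1}}\|_E$ --- an interaction between \emph{adjacent} blocks only. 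The residual index shift is then removed by splitting $Tf$ into the even- and odd-indexed pieces and applying the upper and lower $l$-estimates to the coarser partitions obtained by merging each $I_{k-1}\cup I_k$; this is exactly where the uniformity of the estimates over all $\mathcal{Q}\in\Pi$ is used. Your fixed partition produces an all-blocks interaction that no finite merging can align, which is why the obstacle you identify is intrinsic to your route rather than removable. (Secondary issues --- $\Phi$ is only left-continuous without an AC-norm hypothesis on $E'$, and $\Phi(a+)$ need not vanish --- are repairable; the convolution step is not.)
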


We observe that similar results hold when we replace $v$ and $u$ by $v\chi_{J}$ and $u\chi_{J}$ respectively, where $J$ is any
subinterval of $I.$ Note that in \cite{KO} the verification  of the above conditions is carried only  for $I.$ However, the methods of
proof work equally well for arbitrary intervals $J \subset I.$

\begin{thm} \label{t1.2}
Let $J=(c,d)$\label{boundinterval} be any interval of $I$; let  $E$ and $F$ be BFS for which there exists a family of BSS
$l=\{l_{\mathcal{Q}}\}_{\mathcal{Q}\in\Pi}$ such that $E$ satisfies a uniform lower $l$-estimate and $F$ a uniform upper $l$-estimate.
Then the operator
$$
T_{J}f(x)=v(x)\chi_{J}(x)\int_{a}^{x}u(t)\chi_{J}(x)f(t)dt
$$
 is bounded from $E$ into $F$ if and only if
$$
A_{J}=\sup_{t\in J}A_{J}(t)=\sup_{t\in
J}\|v\chi_{J}\chi_{(t,d)}\|_{F}\|u\chi_{J}\chi_{(c,t)}\|_{E'}<\infty.
$$
Moreover $A_{J}\leq\|T_{J}\|\leq K\cdot A_{J},$ where $K\geq1$ is a constant independent of $J.$
\end{thm}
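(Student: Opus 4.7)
The plan is to adapt the proof of Theorem~\ref{t1.1} directly to the subinterval $J$, verifying along the way that every constant that appears depends only on $E$, $F$ and the family $\{l_{\mathcal{Q}}\}$, not on the endpoints $c,d$.

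For the necessity and the lower bound $A_J\leq\|T_J\|$, I would fix $t\in J$ and a nonnegative $f\in E$ supported in $(c,t)$. For $x\in(t,d)$ one has
$$
T_J f(x)=v(x)\int_c^t u(\tau)f(\tau)\,d\tau,
$$
so $\|T_Jf\|_F\geq \|v\chi_J\chi_{(t,d)}\|_F\int_c^t u(\tau)f(\tau)\,d\tau$. Taking the supremum over such $f$ with $\|f\|_E\leq 1$ and invoking the sharpness of the H\"older inequality in the BFS $E$ yields $\|T_J\|\geq A_J(t)$, whence $\|T_J\|\geq A_J$.

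For the sufficiency, I would follow the scheme used in \cite{KO} for Theorem~\ref{t1.1}: build a partition $\mathcal{Q}=\{J_i\}$ of $J$ on which the local quantities $A_{J_i}$ are comparable to $A_J$ up to an absolute constant; use this to bound $\|T_Jf\,\chi_{J_i}\|_F$ by $C\,A_J\,\|f\chi_{(c,\sup J_i)}\|_E$, splitting $\int_c^x uf$ into $\int_c^{\inf J_i} uf$ plus a local contribution on $J_i$; and combine using the uniform upper $l$-estimate on $F$ and the uniform lower $l$-estimate on $E$. The essential point is that any partition $\mathcal{Q}$ of $J$ extends to a partition $\widetilde{\mathcal{Q}}\in\Pi$ of $I$ by adjoining the (at most two) intervals of $I\setminus J$. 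Since $T_Jf$ and $f$ both vanish outside $J$, these extra blocks contribute zero coefficients to the sums $\sum_i\|T_Jf\chi_{J_i}\|_F\, e_{J_i}$ and $\sum_i\|f\chi_{J_i}\|_E\, e_{J_i}$, so the $l_{\widetilde{\mathcal{Q}}}$-norms collapse to the corresponding sums over $\mathcal{Q}$ and the $l$-estimates apply with their original universal constants.

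The main obstacle is precisely this uniformity of $K$ in $J$. Once one verifies that (i) the scheme of \cite{KO} producing the partition with $A_{J_i}$ comparable to $A_J$ uses only the structure of $E'$ and $F$ restricted to subintervals, and therefore has an absolute constant, and (ii) the $l$-estimates transfer from $I$ to $J$ by the extension trick above, the argument gives $\|T_J\|\leq K\cdot A_J$ with $K$ equal to the product of the upper and lower $l$-estimate constants and the local scheme constant -- none of which depend on $c$ or $d$.
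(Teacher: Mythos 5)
Your proposal is correct and follows exactly the route the paper intends: the paper offers no proof of Theorem~\ref{t1.2} beyond the remark that the methods of \cite{KO} used for Theorem~\ref{t1.1} ``work equally well for arbitrary intervals $J\subset I$,'' and your argument is precisely a careful implementation of that remark. In particular, you correctly isolate the only genuinely new point --- uniformity of $K$ in $J$ --- and resolve it by the observation that $T_Jf=T_J(f\chi_J)$ and that any partition of $J$ extends to a partition of $I$ in $\Pi$ with the extra blocks contributing zero, so the $l$-estimate constants for $I$ apply unchanged.
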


In \cite{EGP} the authors establish a general criterion for $T$ to be compact from $E$ to $F$ when $T:\,E\rightarrow F$ is bounded.
Indeed the following theorem is valid.
\begin{thm}  \label{t1.3}
Let $T:E\rightarrow F$ be bounded, where $E,\,F$ are BFS with  AC- norms.
Then $T$ is compact from $E$ to $E$ if and only if the following two statements are satisfied:
$$
\lim\limits_{x\rightarrow a+}\sup\limits_{a<r<x}\|v\chi_{(r,x)}\|_{F}\|u\chi_{(a,r)}\|_{E'}=0,
$$
and
 $$
\lim\limits_{x\rightarrow b-}\sup\limits_{x<r<b}\|v\chi_{(r,b)}\|_{F}\|u\chi_{(x,r)}\|_{F'}=0.
$$
\end{thm}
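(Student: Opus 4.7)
The plan is to prove the two implications separately, relying on the interval decomposition supplied by Theorem~\ref{t1.2}. The hypotheses say exactly that for each $\eps>0$ there exist $c$ close to $a$ and $d$ close to $b$ with $A_{(a,c)}<\eps$ and $A_{(d,b)}<\eps$, so by Theorem~\ref{t1.2}, $\|T_{(a,c)}\|,\|T_{(d,b)}\|\leq K\eps$. Splitting
$$
Tf(x)=T_{(a,c)}f(x)+T_{(c,d)}f(x)+T_{(d,b)}f(x)+v(x)\chi_{(c,b)}(x)\int_a^c uf+v(x)\chi_{(d,b)}(x)\int_c^d uf,
$$
the last two terms are rank one and hence compact, while $\|T_{(a,c)}\|+\|T_{(d,b)}\|\leq 2K\eps$. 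Therefore $T$ lies within $2K\eps$ of a compact operator provided $T_{(c,d)}$ is itself compact, and letting $\eps\to 0$ gives sufficiency.

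Compactness of $T_{(c,d)}$ is the crux of this direction. I would approximate it in operator norm by the finite-rank operators
$$
T_Nf(x)=v(x)\sum_{i=1}^N\chi_{(t_{i-1},t_i]}(x)\int_a^{t_{i-1}}u(t)f(t)\,dt
$$
associated with partitions $c=t_0<\dots<t_N=d$. On each $(t_{i-1},t_i]$ the difference $T_{(c,d)}-T_N$ is a Hardy-type operator on that subinterval, whose norm Theorem~\ref{t1.2} bounds by a constant multiple of $\|v\chi_{(t_{i-1},t_i]}\|_F\|u\chi_{(t_{i-1},t_i]}\|_{E'}$. Refining the partition drives the maximum over $i$ to zero via the AC-norm property of $E'$ (equivalently $E$) and of $F$, so $T_{(c,d)}$ is compact as a norm-limit of finite-rank operators.

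For necessity I argue by contradiction. Assume $T$ is compact but the first limit fails; then there exist $\delta>0$, $x_n\downarrow a$, and $r_n\in(a,x_n)$ with $\|v\chi_{(r_n,x_n)}\|_F\|u\chi_{(a,r_n)}\|_{E'}\geq\delta$. Sharpness of H\"older's inequality supplies $f_n\geq 0$ supported in $(a,r_n)$ with $\|f_n\|_E\leq 1$ and $c_n:=\int_a^{r_n}uf_n\geq\tfrac12\|u\chi_{(a,r_n)}\|_{E'}$. Since $f_n$ vanishes to the right of $r_n$, one has $Tf_n(x)=c_n v(x)$ for $x\in(r_n,x_n)$, so $\|\chi_{(r_n,x_n)}Tf_n\|_F\geq\delta/2$. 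Compactness of $T$ yields a subsequence $Tf_{n_k}\to g$ in $F$, but the AC-norm of $F$ forces $\|\chi_{(r_{n_k},x_{n_k})}g\|_F\to 0$ since $(r_{n_k},x_{n_k})$ shrinks to $\{a\}$; combined with $\|\chi_{(r_{n_k},x_{n_k})}(Tf_{n_k}-g)\|_F\leq\|Tf_{n_k}-g\|_F\to 0$, this contradicts the uniform lower bound $\delta/2$. The failure of the second limit is handled symmetrically near $b$.

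The main obstacle is the compactness of $T_{(c,d)}$: even though this middle interval is bounded away from both singular endpoints, compactness in an abstract BFS is not automatic and hinges on controlling the mesh-refinement errors uniformly through the AC-norm property. A subtler point on the necessity side is that $Tf_n$ does not in general decay away from $a$---it carries a tail $c_n v(x)$ on the whole of $(r_n,b)$---so the contradiction must be extracted from the shrinking window $(r_n,x_n)$ rather than from any global norm estimate on $Tf_n$.
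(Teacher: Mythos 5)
The paper itself offers no proof of Theorem~\ref{t1.3}: the result is quoted from \cite{EGP}. So what follows assesses your argument on its own terms rather than against a proof in the text. Your necessity half is correct and self-contained: the operator $f\mapsto v\chi_{(r_n,x_n)}\int_a^{r_n}uf$ is rank one with norm exactly $\|v\chi_{(r_n,x_n)}\|_{F}\|u\chi_{(a,r_n)}\|_{E'}$, the near-optimal nonnegative $f_n$ exist because $E'$ is norm-fundamental in $E^{*}$, and the AC-norm of $F$ applied to the single limit function $g$ kills the contribution of the shrinking windows, yielding the contradiction.

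The sufficiency half has a genuine gap. You deduce $\|T_{(a,c)}\|\le K\eps$ from $A_{(a,c)}<\eps$ ``by Theorem~\ref{t1.2}'', but Theorem~\ref{t1.2} carries the hypothesis that $E$ satisfies a uniform lower $l$-estimate and $F$ a uniform upper $l$-estimate, and Theorem~\ref{t1.3} assumes neither --- only that $T$ is bounded and that $E,F$ have AC-norms. In a general pair of BFS the upper bound $\|T_J\|\le K A_J$ is false; this is precisely why Theorems~\ref{t1.1} and~\ref{t1.2} need the $l$-estimates. The usual level-set decomposition of $(a,c)$ into pieces on which $\|u\chi_{(a,\cdot)}\|_{E'}$ doubles produces countably many disjointly supported contributions, each of size comparable to $A_{(a,c)}$, and without a lower $\ell_p$-type estimate their sum cannot be controlled. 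The same objection touches your mesh-refinement bound for $T_{(c,d)}$, where in addition the number of partition intervals grows as the mesh shrinks; that particular step can be repaired by the pointwise estimate $|(T_{(c,d)}-T_N)f(x)|\le v(x)\max_i\|u\chi_{(t_{i-1},t_i]}\|_{E'}\|f\|_E$, which avoids summing over the partition altogether, but it then requires absolute continuity of the norm of $u\chi_{(c,d)}$ in $E'$, again not among the stated hypotheses. As written, your argument proves sufficiency only under the additional $l$-estimate assumptions --- which do hold where Theorem~\ref{t1.3} is applied in this paper, since there $E\in\M$, but which are not part of the statement you set out to prove.
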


Note that if $E$ and $F$ have  AC-norms  and $u\in E',\,v\in F$ then $T:\,E\rightarrow F$ is compact.

More detailed information about the compactness properties of $T$ is provided by the approximation, isomorphism, Bernstein, Gelfand and Kolmogorov numbers and we next recall the definition of those quantities.

 $B(E,F)$ will denote the space of all bounded linear maps of $E$ to $F.$ Given a closed linear subspace $M$ of $E,$  the embedding map of $M$ into $E$ will be denoted by $J_{M}^{E}$ and the canonical map of $E$ onto the quotient space $E/M$ by $Q_{M}^{E}.$ Let $S\in B(E,E).$ Then the  modulus of injectivity of $T$ is
 $$
 j(S)=\sup\{\rho\geq0:\,\,\|Sx\|_{E}\geq\rho\|x\|_{E}\,\,\,\mbox{for all} \quad  x\in E\}.
 $$

\begin{defn}
Let $S \in  B(E,E)$  and $n\in \mathbb{N}.$  Then the $n$th approximation, isomorphism, Gelfand, Bernstein and Kolmogorov numbers
of $S$ are defined by
$$
a_{n}(S)=\inf\{\|S-P\|:\,\,P\in B(E,E),\,\,\mbox{rank}(P)<n\};
$$
$$
i_{n}(S)=\sup\{\|A\|^{-1}\|B\|^{-1}\},
$$
where the supremum is taken over all possible Banach spaces $G$ with $\mbox{dim} G\geq n$ and maps $A\in B(E,G),\,\,B\in
B(G,E)$ such that $ASB$ is the identity on $G;$

$$
c_{n}(S)=\inf\{\|SJ_{M}^{E}\|:\,\,\mbox{codim}(M)<n\};
$$

$$
b_{n}(S)=\sup\{j(SJ_{M}^{E});\,\,\mbox{dim}(M)\geq n\};
$$

$$
d_{n}(S)=\inf\{\|Q_{M}^{E} S\|:\,\,\mbox{dim}(M)<n\}.
$$
respectively.
\end{defn}

Below $s_{n}(S)$ denotes any of the $n$th approximation, isomorphism, Gelfand, Kolmogorov or Bernstein numbers of the operator $S.$
We summaries some of the facts concerning the numbers $s_{n}(S)$  in the following theorem (see \cite{LE}):
\begin{thm} \label{t1.4}
Let $S\in B(E,E)$ and $n\in \mathbb{N}.$ Then
$$
a_{n}(S)\geq c_{n}(S)\geq b_{n}(S)\geq i_{n}(S)
$$
and
$$
a_{n}(S)\geq d_{n}(S)\geq i_{n}(S).
$$
\end{thm}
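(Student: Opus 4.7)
The plan is to verify each inequality directly from the definitions by standard subspace/quotient manipulations; these are classical facts but the proofs are short.

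For $a_n(S)\ge c_n(S)$, given $P\in B(E,E)$ with $\operatorname{rank}(P)<n$ I set $M=\ker P$; then $\operatorname{codim} M\le\operatorname{rank}(P)<n$, and since $P|_M=0$ we have $SJ_M^E=(S-P)J_M^E$, so $\|SJ_M^E\|\le\|S-P\|$, and taking the infimum over $P$ gives $c_n(S)\le a_n(S)$. The inequality $a_n(S)\ge d_n(S)$ is dual in spirit: take $M=P(E)$, note $\dim M<n$ and $Q_M^E P=0$, whence $Q_M^ES=Q_M^E(S-P)$ and $\|Q_M^ES\|\le\|Q_M^E\|\,\|S-P\|\le\|S-P\|$.

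For $c_n(S)\ge b_n(S)$, given $M\subset E$ with $\dim M\ge n$ and $N\subset E$ with $\operatorname{codim} N<n$, a dimension count produces a nonzero $x\in M\cap N$; then $\|Sx\|\le\|SJ_N^E\|\,\|x\|$ while $\|Sx\|\ge j(SJ_M^E)\,\|x\|$, so $j(SJ_M^E)\le\|SJ_N^E\|$, and taking sup over $M$ and inf over $N$ yields the claim. For $b_n(S)\ge i_n(S)$, given a factorization $ASB=I_G$ with $\dim G\ge n$ set $M=B(G)$; since $ASB=I_G$ forces $B$ injective, $\dim M\ge n$. For $x=Bg\in M$,
$$\|x\|\le\|B\|\,\|g\|=\|B\|\,\|ASx\|\le\|A\|\,\|B\|\,\|Sx\|,$$
so $j(SJ_M^E)\ge\|A\|^{-1}\|B\|^{-1}$ and the conclusion follows by taking the supremum over factorizations.

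The main obstacle is the last inequality $d_n(S)\ge i_n(S)$: a factorization $ASB=I_G$ with $\dim G\ge n$ and a subspace $M\subset E$ of dimension $<n$ do not interact through any canonical map, since $Q_M^E$ is not defined on $G$. The key idea is to work on the $G$-side: $A(M)\subset G$ is a proper subspace because $\dim A(M)\le\dim M<n\le\dim G$, so by Riesz's lemma, for each $\varepsilon>0$ there exists $g\in G$ with $\|g\|=1$ and $\operatorname{dist}(g,A(M))\ge1-\varepsilon$. Applying $A$ to $SBg-m$ yields $g-Am$, hence for every $m\in M$,
$$\|A\|\,\|SBg-m\|\ge\|g-Am\|\ge1-\varepsilon.$$
Taking the infimum over $m\in M$ gives $\|Q_M^E SBg\|\ge(1-\varepsilon)/\|A\|$, while $\|Bg\|\le\|B\|$, so $\|Q_M^E S\|\ge(1-\varepsilon)/(\|A\|\,\|B\|)$. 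Letting $\varepsilon\to 0$, then taking the infimum over admissible $M$ and the supremum over factorizations, produces $d_n(S)\ge i_n(S)$.
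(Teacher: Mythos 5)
Your proof is correct in every step. Note, however, that the paper itself offers no proof of this theorem: it is stated as a summary of known facts with a citation to the book of Lang and Edmunds \cite{LE}, so there is no argument in the paper to compare yours against. What you have written is essentially the classical derivation of the $s$-number hierarchy found in that reference and in Pietsch: the kernel/range tricks for $a_n\geq c_n$ and $a_n\geq d_n$, the dimension-count intersection argument for $c_n\geq b_n$, the injectivity of $B$ and the identity $g=ASBg$ for $b_n\geq i_n$, and the Riesz-lemma argument on the $G$-side for $d_n\geq i_n$ (where, since $A(M)$ is finite-dimensional and hence closed and proper in $G$, one could even take $\operatorname{dist}(g,A(M))=1$ exactly, dispensing with the $\varepsilon$). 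The only points worth making explicit are the two small facts you use implicitly: $\operatorname{codim}\ker P=\operatorname{rank}P$ for the first inequality, and $\|Q_M^E\|\leq 1$ for the second; both are immediate. Your argument is a complete and self-contained proof of the quoted statement.
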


 The behavior of the $s$-numbers of the Hardy-type operator $T$ is
 reasonably well understood in case $E=F=L^{p}(a,b).$
\begin{thm}  \label{t1.5}
Suppose that $1<p<\infty,\,\,v\in L^{p}(a,b),\,u\in L^{q}(a,b)$ where $1/p+1/q=1.$ Then for $T:\,L^{p}(a,b)\rightarrow L^{p}(a,b)$ we have
$$
\lim\limits_{n\rightarrow\infty}ns_{n}(T)=\frac{1}{2}\gamma_{p}\int_{a}^{b}u(x)v(x)dx,
$$
where $\gamma_{p}=\pi^{-1}p^{1/q}q^{1/p}\sin(\pi/p).$
\end{thm}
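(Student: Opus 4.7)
My plan is to exploit Theorem~\ref{t1.4}, which squeezes all five $s$-numbers under consideration between $a_n(T)$ (the largest) and $i_n(T)$ (the smallest). It therefore suffices to establish the two one-sided bounds
\begin{equation*}
\limsup_{n\to\infty} n\,a_n(T)\ \le\ \tfrac{1}{2}\gamma_p\int_a^b u(x)v(x)\,dx\ \le\ \liminf_{n\to\infty} n\,i_n(T),
\end{equation*}
from which the common limit follows for every $s_n$ in the chain. Both bounds reduce to a sharp asymptotic for the singular values of a Hardy-type operator on a single small sub-interval.

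For the upper bound I would use a block decomposition. Given $N\in\mathbb{N}$, partition $(a,b)$ into sub-intervals $I_j=(\alpha_{j-1},\alpha_j)$, $j=1,\dots,N$, of equal $uv$-integral $N^{-1}\int_a^b uv$. For $x\in I_j$,
\begin{equation*}
Tf(x)\ =\ v(x)\sum_{k<j}\int_{I_k} u(t)f(t)\,dt\ +\ v(x)\int_{\alpha_{j-1}}^{x} u(t)f(t)\,dt;
\end{equation*}
the first term defines a rank $\le N-1$ operator (its range lies in the span of $\{v\chi_{I_j}\}_{j\ge 2}$), while the second is the block-diagonal operator whose $j$th block is the local Hardy operator $T_{I_j}$ of Theorem~\ref{t1.2}. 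Approximating each $T_{I_j}$ by its best rank-$K$ operator, with error $a_{K+1}(T_{I_j})$, produces an approximation of $T$ of rank $\le(N-1)+NK$ and error $\max_j a_{K+1}(T_{I_j})$. The sharp local asymptotic described below then gives $a_{NK+N-1}(T)\le(1+o(1))\tfrac{\gamma_p}{2(K+1)N}\int_a^b uv$; multiplying by $NK+N-1$ and letting first $K\to\infty$, then $N\to\infty$, yields the desired upper bound.

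For the lower bound on $i_n(T)$ I would use a Bernstein-type construction. Partition $(a,b)$ into $n$ sub-intervals of equal $uv$-integral, and on each choose an almost-extremal function $f_j$ for the norm of $T_{I_j}$, so that $\|T_{I_j}f_j\|_p\ge(1-\eps)\tfrac{1}{2}\gamma_p\int_{I_j}uv\cdot\|f_j\|_p$; existence and quality of these extremisers come from the same local asymptotic in its dual form. Since the $f_j$ have disjoint supports, their span $G_n$ is isometric to $\ell_n^p$ and $T|_{G_n}$ acts block-diagonally. The injection $B:\ell_n^p\to L^p(a,b)$ together with a suitable coordinate map $A:L^p(a,b)\to\ell_n^p$ then satisfies $ASB=\mathrm{id}_{\ell_n^p}$ with $\|A\|^{-1}\|B\|^{-1}\ge(1-\eps)\tfrac{\gamma_p}{2n}\int_a^b uv$, giving the required lower bound.

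The main obstacle, and the source of the explicit constant, is the sharp asymptotic
\begin{equation*}
\lim_{|I|\to 0}\; m\,a_m(T_I)\,\Big/\,\tfrac{1}{2}\gamma_p\int_I u(x)v(x)\,dx\ =\ 1,
\end{equation*}
with control uniform in $m$ and in the location of $I$. One proves it by freezing the continuous positive weights $u,v$ at an interior point of a small $I$, reducing the problem to the unweighted Volterra operator $Hf(x)=\int_0^\ell f(t)\,dt$ on $L^p(0,\ell)$. Its singular values can be computed explicitly via the Euler--Lagrange equation of the associated extremal problem, whose solutions are generalised $p$-trigonometric functions; their period yields the $\pi^{-1}\sin(\pi/p)$ factor in $\gamma_p$, while the ground state produces the factor $\tfrac{1}{2}$. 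Everything else in the argument is structural bookkeeping around this asymptotic.
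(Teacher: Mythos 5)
First, a point of reference: the paper does not prove Theorem \ref{t1.5} at all. It is quoted from the literature (the case $p=2$ from \cite{EEH2}, see also \cite{NS}; general $1<p<\infty$ from \cite{EHL2}), so there is no internal proof to compare with. Judged on its own terms, your outline follows the standard architecture (squeeze all five $s$-numbers via Theorem \ref{t1.4}, finite-rank upper bound by partition, Bernstein/isomorphism-type lower bound, constant from a one-interval extremal problem solved by $p$-trigonometric functions), but the lower bound has a genuine gap. You claim that because the $f_j$ have disjoint supports, $T$ acts block-diagonally on their span. That is false for a Volterra-type operator: $Tf_j(x)=v(x)\int_a^x u f_j$ is in general nonzero for every $x$ to the right of $\mbox{supp}\,f_j$, so each $f_j$ leaves a tail $v(x)\int_{I_j}uf_j$ on all later intervals and the blocks interact. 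To kill the tails you must impose $\int_{I_j}uf_j=0$, and under that constraint the correct local quantity is no longer $\|T_{I_j}\|$ but $\sup_f\inf_\alpha\|T_{I_j}f-\alpha v\|/\|f\|$, i.e.\ the function $\A$ of Section \ref{s4}. Already for $p=2$, $u=v=1$ on $(0,1)$ these differ by a factor of $2$: $\|H\|=2/\pi=\gamma_2$ while $\A=1/\pi=\tfrac12\gamma_2$, and this is precisely where the $\tfrac12$ in the theorem comes from. Making the construction rigorous requires test functions glued from a negative and a positive piece straddling adjacent partition points, normalised so that the primitive vanishes at the right endpoint (the content of Lemma \ref{upper estimate} in this paper and of the corresponding lemma in \cite{LE}); it is not merely structural bookkeeping.

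Second, the ``sharp local asymptotic'' to which you defer the constant is essentially the theorem itself, and as stated it is false: uniformity in $m$ cannot hold, since for $m=1$ the left side is $\|T_I\|\approx\gamma_p\int_Iuv$, not $\tfrac12\gamma_p\int_Iuv$. What is true (and what the cited proofs use) is an exact computation for constant weights of the quantity $\A$ on an interval, via the Euler--Lagrange equation and $p$-trigonometric functions --- for $p\neq2$ there are no singular values in the spectral sense, so ``singular values of the Volterra operator'' is the wrong object. Moreover the hypotheses give only $u\in L^q$, $v\in L^p$, so ``freezing the continuous positive weights'' is unavailable; the reduction to piecewise-constant weights must go through norm approximation by step functions together with Lipschitz-type perturbation estimates for $\A$ and for $\|T_J\|$ (the analogues of Lemmas \ref{approx1} and \ref{approx2}, which is exactly how Theorem \ref{epslon  equivalent} proceeds). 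Your upper-bound bookkeeping (rank $\le N-1+NK$, error $\max_j a_{K+1}(T_{I_j})$, then $K\to\infty$ followed by $N\to\infty$) is sound once that local estimate is actually established.
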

 
When $p=2$  and the $s_n$  are  approximation numbers this was first established in \cite{EEH2},  see also \cite{NS}.
The general case, namely that when $1 < p <\infty$, was proved in \cite{EHL2}, where it appears as a special case of results for trees.   
 When $ p = 2$, for “nice”  $u$ and $v$  these results were improved in   \cite{EKL} and more recently extended for
$1 < p <\infty $ in   \cite{L}.

 We say that a space $E$ fulfills the Muckenhoupt condition if for some constant $C>0$ and for all intervals $J\subset I$ we have
 $$
 \frac{1}{|J|}\|\chi_{J}\|_{E}\|\chi_{J}\|_{E'}\leq C.
 $$

 Note that if $E$ fulfills the Muckenhoupt condition, then using  H\"{o}lders inequality  we obtain
 $$
 \frac{1}{|J|}\int_{J}|f(x)|dx\leq C\frac{\|f\chi_{J}\|_{E}}{\|\chi_{J}\|_{E}},
 $$
 and if additionally   $E$ simultaneously satisfies upper and lower
$l=\{l_{\mathcal{Q}}\}_{\mathcal{Q}\in\Pi}$ estimates,  then from \eqref{min}
we obtain
$$
\left\|\sum_{I_{i}\in\mathcal{Q}}
\frac{1}{|I_{i}|}\int_{I_{i}}|f(x)|dx\right\|_{E}\leq C_{1} \|f\|_{E},
$$
where $C_{1}>0$ is an absolute constant independent of the partition $\mathcal{Q}$ of $I.$ If for a space $E$ we have the Muckenhoupt condition and \eqref{min}, we denote this by writing $E\in \mathcal{M}.$  Note that in the case of a reflexive  variable exponent Lebesgue space the condition
$L^{p(\cdot)}\in \mathcal{M}$ implies the boundedness of the Hardy-Littlwood maximal operator in $L^{p(\cdot)}$ (see \cite{CUF}, \cite{DHHR}).

 The main result of this paper is the following theorem.
\begin{thm}\label{main result}
Let $E$ be BFS  belong to the class $\mathcal{M}.$ Let  the spaces $E,\,E^{\ast}$ be strictly convex and  assume that $E$ and $E'$  have AC- norms.  Suppose $u\in E'$, $ v\in E.$ Then there exists constants $C_{1}=C_{1}(E),C_{2}=C_{2}(E)>0$ such that, for the map  $T:E\to E$
$$
C_{1}\int_{a}^{b}u(x)v(x)dx \leq\limsup\limits_{n\rightarrow\infty}ns_{n}(T)
 \leq \limsup\limits_{n\rightarrow\infty}ns_{n}(T)\leq  C_{2}\int_{a}^{b}u(x)v(x)dx.
$$
\end{thm}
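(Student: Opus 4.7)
The plan is to adapt the Edmunds--Lang strategy for the $L^p$ case to general BFSs in $\mathcal{M}$, using the geometric hypotheses (strict convexity of $E$ and $E^{\ast}$, AC-norm on $E$ and $E'$, Muckenhoupt condition) as replacements for the concrete structure of $L^p$. The central object is
$$
\mathcal{A}(J)=\sup_{t\in J}\|v\chi_{J\cap(t,b)}\|_{E}\,\|u\chi_{J\cap(a,t)}\|_{E'},
$$
which by Theorem~\ref{t1.2} satisfies $\mathcal{A}(J)\approx\|T_{J}\|$ uniformly in $J$. Under the AC-norm hypothesis $\mathcal{A}$ depends continuously on the endpoints of $J$, so for every $\eta>0$ one can partition $I=\bigsqcup_{i=1}^{N(\eta)}J_{i}$ with $\mathcal{A}(J_{i})=\eta$ for $i<N(\eta)$ and $\mathcal{A}(J_{N(\eta)})\le\eta$. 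The asymptotic then rests on a counting lemma asserting
$$
\lim_{\eta\to 0^{+}}\eta\,N(\eta)=K(E)\int_{a}^{b}u(x)v(x)\,dx,
$$
whose heuristic is that on a short interval $J_{i}$ near a Lebesgue point $\bar x_{i}$ of $uv$ one has $\mathcal{A}(J_{i})\approx u(\bar x_{i})v(\bar x_{i})\|\chi_{J_{i}}\|_{E}\|\chi_{J_{i}}\|_{E'}\approx u(\bar x_{i})v(\bar x_{i})|J_{i}|$, where the last equivalence combines the Muckenhoupt condition with H\"older's inequality; summing the equalities $\mathcal{A}(J_{i})=\eta$ recovers the integral.

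For the upper estimate, writing $a_{i}$ for the left endpoint of $J_{i}$, I would decompose
$$
Tf(x)=\sum_{i=1}^{N(\eta)}\chi_{J_{i}}(x)v(x)\int_{a_{i}}^{x}u(t)f(t)\,dt\;+\;\sum_{i=1}^{N(\eta)}\chi_{J_{i}}(x)v(x)\int_{a}^{a_{i}}u(t)f(t)\,dt.
$$
The second sum is finite rank of rank at most $N(\eta)-1$. On each $J_{i}$, reflexivity of $E$ (which follows from AC-norm on both $E$ and $E'$) together with strict convexity of $E$ and $E^{\ast}$ yields unique norming element and functional for $T_{J_{i}}$, hence an optimal rank-one approximation $P_{i}$ with $\|T_{J_{i}}-P_{i}\|\le C(E)\mathcal{A}(J_{i})=C(E)\eta$. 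Summing, $T$ is approximated to within $C(E)\eta$ by an operator of rank at most $2N(\eta)-1$, so $a_{2N(\eta)}(T)\le C(E)\eta$. Combined with the counting lemma this gives $\limsup_{n}n\,a_{n}(T)\le C_{2}\int_{a}^{b}u(x)v(x)\,dx$; by Theorem~\ref{t1.4} the same bound passes to all five $s$-numbers.

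For the lower estimate I would construct, for each $J_{i}$, a normalised test function $f_{i}\in E$ supported in $J_{i}$ and a functional $\psi_{i}\in E^{\ast}$ supported in $J_{i}$ with $(Tf_{i},\psi_{i})\ge c(E)\mathcal{A}(J_{i})=c(E)\eta$; strict convexity of $E^{\ast}$ secures existence and uniqueness of the extremal pair. Retaining only every other $J_{i}$ kills the off-diagonal residuals $v(x)\int_{a}^{a_{j}}u(t)f_{i}(t)\,dt$ outside $J_{i}$, producing maps $A\colon E\to\mathbb{R}^{n}$ and $B\colon\mathbb{R}^{n}\to E$ with $n\gtrsim N(\eta)/2$ for which $ATB$ is the identity and $\|A\|\|B\|\le 1/(c(E)\eta)$. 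Therefore $i_{n}(T)\ge c(E)\eta$, and by Theorem~\ref{t1.4} every $s$-number considered is bounded below by the same quantity. The counting lemma then yields $\liminf_{n}n\,s_{n}(T)\ge C_{1}\int_{a}^{b}u(x)v(x)\,dx$, completing the proof.

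The main obstacle is the counting lemma. The rank-one and Bernstein constructions plug in almost formally once the continuity, monotonicity and additivity properties of $\mathcal{A}$ are set up in Section~\ref{s4}; the delicate part is a pointwise differentiation theorem for $\|u\chi_{J}\|_{E'}$ and $\|v\chi_{J}\|_{E}$ as $|J|\to 0$, together with the Muckenhoupt equivalence $\|\chi_{J}\|_{E}\|\chi_{J}\|_{E'}\approx|J|$ and control over the exceptional set on which the pointwise limit fails. This is precisely where the hypothesis $E\in\mathcal{M}$, combined with AC-norm on both $E$ and $E'$, becomes indispensable.
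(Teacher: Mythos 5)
The architecture you propose (partition $I$ into intervals of equal local ``size'' $\eta$, a rank-$N(\eta)$ cutoff for the upper bound, an $A,B$-factorisation for the isomorphism numbers from below, and a counting lemma relating $\eta N(\eta)$ to $\int_a^b uv$) is the same as the paper's, but two of your steps have genuine gaps.

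The serious one is the lower bound. You take $f_i$ and $\psi_i$ supported in $J_i$ and claim that retaining only every other $J_i$ ``kills the off-diagonal residuals.'' It does not: for $j<i$ the function $Tf_j$ equals $v(x)\int_{J_j}uf_j\,dt$ on \emph{every} interval to the right of $J_j$, so the matrix of $ATB$ is lower triangular with uncontrolled entries on all lower rows, not just adjacent ones, and a large diagonal alone does not bound $\|(ATB)^{-1}\|$. The paper's fix is the whole point of its Section \ref{s4}: it works not with your $A_J\approx\|T_J\|$ but with $\A(J)=\sup_f\inf_{\alpha}\|T_{c,J}f-\alpha v\|_{E(J)}/\|f\|_{E(J)}$, proves $\A(J)\approx\|T_{e,J}\|$ for the balance point $e$ of $J$ (Lemma \ref{strictlyestimate}, which is where strict convexity of $E$ actually enters, via Lemmas \ref{strictly}--\ref{strictlycont}), and in Lemma \ref{upper estimate} builds test functions $g_i$ straddling consecutive balance points, with signs and coefficients chosen so that $\int_{J_i}ug_i\,dt=0$; this forces $\mathrm{supp}(Tg_i)\subset J_i$ and makes $BTA$ genuinely the identity. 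Without this mean-zero normalisation --- and without the fact that $T$ retains norm $\gtrsim\A(J_i)$ on such functions, which is exactly what the $\inf_\alpha$ in $\A$ encodes --- your factorisation does not close.

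Second, your counting lemma is both stronger than what is proved or needed (you assert a limit with an exact constant $K(E)$, whereas the paper's Theorem \ref{epslon  equivalent} only gives two-sided bounds for the $\limsup$) and rests on a pointwise differentiation theorem for $\|u\chi_J\|_{E'}$ and $\|v\chi_J\|_{E}$ as $|J|\to0$ that is not available in a general Banach function space. The paper avoids this entirely: it approximates $u$ and $v$ by step functions in the $E'$ and $E$ norms, uses the Lipschitz-type estimates $|\A(J,u_1,v)-\A(J,u_2,v)|\le\|u_1-u_2\|_{E'(J)}\|v\|_{E(J)}$ and its analogue in $v$ (Lemmas \ref{approx1}--\ref{approx2}), and the local estimate $\A(J,u,v)\approx uv|J|$ for constant weights (Lemma \ref{localestimate}, where $E\in\mathcal{M}$ is used). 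Your upper-bound half is essentially sound once these repairs are made.
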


\section{Variable exponent Lebesgue spaces} \label{s3}

Given a measurable function
$p(\cdot):(a,b)\rightarrow[1,+\infty),\,\,L^{p(\cdot)}(a,b)$ denotes the set of measurable functions $f$ on $(a,b)$
such that for some $\lambda>0$,
$$
\int_{(a,b)}\left(\frac{|f(x)|}{\lambda}\right)^{p(x)}dx<\infty.
$$
This set becomes a Banach function space when equipped with the
norm
$$
\|f\|_{p(\cdot)}=\inf\left\{\lambda>0:\,\,
\int_{(a,b)}\left(\frac{|f(x)|}{\lambda}\right)^{p(x)}dx\leq1\right\}.
$$

 These spaces and corresponding variable Sobolev spaces $W^{k,p(\cdot)}$ are of interest in their own right, and
 also have applications to partial differential equations and the  calculus of variations.(For more details of results about
 variable exponent Lebesgue spaces we refer to \cite{CUF} and \cite{DHHR}).

 We say that a function $p: (a,b)\to(1,\infty)$ is  log-H\"older continuous if there exists $C> 0$ such that
$$|p(x)-p(y)|\le \frac{C}{\log(e + 1/|x- y|)} \quad\text{for all}\quad  x, y\in (a,b) \quad  \text{and}\quad x\not =y. $$
 Denote by $\mathcal{P}_{\log}$ the set of all log-H\"older continuous exponents that satisfy
$$
p_{-}=\mbox{ess inf}_{x\in(a,b)}p(x)>1,\,\,\,\,  p_{+}=\mbox{ess sup}_{x\in (a,b)}p(x)<\infty.
$$
 Note
that the log-H\"older continuous condition is in fact optimal in the sense of  the modulus of continuity, for boundedness of the Hardy-Littlewood maximal
operator in variable Lebesgue spaces (see \cite{CUF}, \cite{DHHR}).

 We say that a exponent $p(\cdot)\in\mathcal{P}_{\log}$ is strongly log-H\"{o}lder continuous (and write $p(\cdot)\in\mathcal{SP}_{\log}$)
if there is an increasing continuous function  defined on $[0,b-a]$ such that $\lim\limits_{t\rightarrow0+}\psi(t)=0$ and
$$
-|p(x)-p(y)|\ln|x-y|\leq\psi(|x-y|)\,\,\,\,\mbox{for all}\,\,x,y\in(a,b)\,\,\,\mbox{with}\,\,0<|x-y|<1/2.
$$

In \cite{KO} the following was proved.
\begin{prop}\label{minexample}
 Let $ p(\cdot)\in \mathcal{P}_{\log}.$  Then $L^{p(\cdot)}(a,b)\in \mathcal{M}.$
\end{prop}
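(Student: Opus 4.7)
The plan is to verify separately the two properties defining membership in $\mathcal{M}$: the Muckenhoupt condition, and the existence of a family of BSSs for which $L^{p(\cdot)}$ satisfies both upper and lower $l$-estimates, i.e.\ the two-sided bound \eqref{min}. For the latter, the natural choice is to define $l_{\mathcal{Q}}$ by
$$\left\|\sum_i c_i e_{I_i}\right\|_{l_{\mathcal{Q}}} := \left\|\sum_i \frac{c_i}{\|\chi_{I_i}\|_{p(\cdot)}}\chi_{I_i}\right\|_{p(\cdot)},$$
so that the two $l$-estimates reduce verbatim to the two inequalities in \eqref{min}. Thus the whole proposition reduces to showing the Muckenhoupt condition and \eqref{min} for $L^{p(\cdot)}$.

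For the Muckenhoupt condition I would invoke the standard norm estimate for characteristic functions in variable Lebesgue spaces with log-Hölder continuous exponent: for every interval $J\subset I$,
$$\|\chi_J\|_{p(\cdot)} \approx |J|^{1/p_J},$$
where $p_J$ is (say) the essential infimum or arithmetic mean of $p$ over $J$. The log-Hölder condition guarantees that $|J|^{1/p_J^{-}}$ and $|J|^{1/p_J^{+}}$ are comparable on short intervals, while on long intervals uniform boundedness is automatic from $1<p_-\leq p_+<\infty$. Since $p'(\cdot)\in\mathcal{P}_{\log}$ as well and $(L^{p(\cdot)})'=L^{p'(\cdot)}$ with $1/p_J+1/p'_J=1$, multiplying the two estimates gives $\|\chi_J\|_{p(\cdot)}\|\chi_J\|_{p'(\cdot)}\approx |J|$, which is exactly the Muckenhoupt condition.

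For \eqref{min} I would prove the right (upper) inequality directly and obtain the left (lower) inequality by duality. For the upper inequality, the key ingredient -- and where the log-Hölder regularity does its work via a ``freeze the exponent'' argument on each $I_i$ -- is a pointwise comparison of the form
$$\frac{\|f\chi_{I_i}\|_{p(\cdot)}}{\|\chi_{I_i}\|_{p(\cdot)}} \leq C\bigl(Mf(x)+h(x)\bigr),\qquad x\in I_i,$$
with a harmless correction term $h$, after which the boundedness of the Hardy--Littlewood maximal operator $M$ on $L^{p(\cdot)}$ (known for $p(\cdot)\in\mathcal{P}_{\log}$, see \cite{CUF}, \cite{DHHR}) closes the estimate. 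For the lower inequality, test $\|f\|_{p(\cdot)}$ against $\phi$ with $\|\phi\|_{p'(\cdot)}\leq 1$ via
$$\int f\phi \leq \sum_i \|f\chi_{I_i}\|_{p(\cdot)}\|\phi\chi_{I_i}\|_{p'(\cdot)},$$
insert the Muckenhoupt bound to rewrite the sum as the integral of the product of two piecewise-constant averages, and apply Hölder's inequality together with the upper $l$-estimate (already established in the same way) applied to $p'(\cdot)\in\mathcal{P}_{\log}$.

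The main obstacle is the pointwise comparison between $\|f\chi_{I_i}\|_{p(\cdot)}/\|\chi_{I_i}\|_{p(\cdot)}$ and a constant multiple of the maximal function. This is precisely the step where the log-Hölder regularity is needed in earnest, since it allows one to replace the variable exponent on each $I_i$ by a constant $p_i$ up to multiplicative errors that remain controllable uniformly in the partition $\mathcal{Q}$. Once this local comparison is in hand, the Muckenhoupt step, the duality/Hölder manipulation, and the invocation of the maximal operator bound are all routine tools in the variable-exponent calculus.
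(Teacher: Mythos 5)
The paper itself offers no proof of this proposition --- it is imported verbatim from \cite{KO} --- so your argument has to stand on its own. Most of your architecture is sound and matches the standard treatment: reducing membership in $\mathcal{M}$ to the Muckenhoupt condition plus \eqref{min}, choosing $l_{\mathcal{Q}}$ so that the two $l$-estimates become \eqref{min} verbatim, deriving the Muckenhoupt condition from $\|\chi_J\|_{p(\cdot)}\approx |J|^{1/p_J}$ and duality with $p'(\cdot)\in\mathcal{P}_{\log}$, and obtaining the left-hand inequality of \eqref{min} by testing against $\phi\in L^{p'(\cdot)}$, inserting the Muckenhoupt bound and using the right-hand inequality for $p'(\cdot)$. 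All of that is correct.

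The gap is in your key step for the right-hand inequality of \eqref{min}. The claimed pointwise comparison
\[
\frac{\|f\chi_{I_i}\|_{p(\cdot)}}{\|\chi_{I_i}\|_{p(\cdot)}}\leq C\bigl(Mf(x)+h(x)\bigr),\qquad x\in I_i,
\]
is false already for a \emph{constant} exponent $p>1$: there the left-hand side equals $\bigl(|I_i|^{-1}\int_{I_i}|f|^p\,dt\bigr)^{1/p}$, an $L^p$-average, and $L^p$-averages dominate --- rather than being dominated by --- $L^1$-averages. Concretely, for $I_i=(0,1)$ and $f=\chi_{(0,\delta)}$ the left side is $\delta^{1/p}$ while $Mf(x)\approx\min(1,\delta/x)$, so no correction term that deserves the name ``harmless'' can close the gap as $\delta\to0$; and replacing $Mf$ by $\bigl(M(|f|^{p(\cdot)})\bigr)^{1/p(\cdot)}$ would require boundedness of $M$ on $L^1$, which fails. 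What H\"older's inequality plus the Muckenhoupt condition actually yield is the \emph{reverse} bound $|I_i|^{-1}\int_{I_i}|f|\,dt\leq C\,\|f\chi_{I_i}\|_{p(\cdot)}/\|\chi_{I_i}\|_{p(\cdot)}$, which is of no use here. The correct proof of this inequality (and it is the genuinely hard half, i.e.\ the actual content of the theorem in \cite{KO}) is a modular computation: normalize $\|f\|_{p(\cdot)}\leq1$, set $t_i=\|f\chi_{I_i}\|_{p(\cdot)}/\|\chi_{I_i}\|_{p(\cdot)}$, and show
\[
\int_{I_i}(t_i/C)^{p(x)}\,dx\;\lesssim\;\int_{I_i}|f(x)|^{p(x)}\,dx+|I_i|^{1+\sigma}
\]
by freezing the exponent at $p_{I_i}$; log-H\"older continuity enters through $t_i^{|p(x)-p_{I_i}|}\leq C$, valid because $t_i$ is trapped between fixed powers of $|I_i|$ on the relevant set of indices (the remaining indices contribute only the summable error $|I_i|^{1+\sigma}$). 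Summing over $i$ and using $\sum_i\int_{I_i}|f|^{p(x)}dx\leq1$ finishes the argument with no appeal to the maximal operator at all. So your plan is salvageable, but its central inequality must be replaced by this modular, frozen-exponent estimate.
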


Note that there  exists another classes  of exponents giving rise to property \eqref{min}.  Indeed,
 let $p(\cdot):[0,1]\rightarrow [1,+\infty)$ be log-H\"older continuous,
  $w(t)=\int_{a}^{t}l(u)du,\,t\in (a,b),
\,\,w(b)=1,\,\, l(u)>0\,\,(u\in (a,b))$. Then $L^{p((w(\cdot))}(a,b)$ has property \eqref{min})  (see \cite{K5}).

  From Theorem  \ref{main result} and Proposition \ref{minexample} we obtain
  \begin{cor} \label{c2.2}
  Let $ p(\cdot)\in\mathcal{P}_{\log}$  and $v\in L^{p(\cdot)}(a,b),\,u\in L^{q(\cdot)}(a,b)\,\,(1/p(x)+1/q(x)=1,\,x\in (a,b)).$ Then $T$ acts from the variable exponent space 
	$ L^{p(\cdot)}(a,b)$  	to itself and 
  $$
 C_{1}\int_{(a,b)}u(x)v(x)dx\leq
 \limsup\limits_{n\rightarrow\infty}ns_{n}(T)
 \leq \limsup\limits_{n\rightarrow\infty}ns_{n}(T)\leq  C_{2}\int_{(a,b)}u(x)v(x)dx.
 $$
  \end{cor}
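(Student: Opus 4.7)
The plan is to verify that every hypothesis of Theorem \ref{main result} holds for $E=L^{p(\cdot)}(a,b)$ under the assumption $p(\cdot)\in\mathcal{P}_{\log}$, and then to invoke Theorem \ref{main result} directly. Since the conclusion of the corollary is literally the conclusion of Theorem \ref{main result} applied to $E=L^{p(\cdot)}(a,b)$, the only work is checking the list of structural hypotheses one by one.

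First I would recall that $L^{p(\cdot)}(a,b)$ is a BFS on $I=(a,b)$ (this is standard, see the references \cite{CUF}, \cite{DHHR}). The membership condition $L^{p(\cdot)}(a,b)\in\mathcal{M}$ is exactly the content of Proposition \ref{minexample}, so it is immediate from $p(\cdot)\in\mathcal{P}_{\log}$. For the associate space identification, the condition $p_{-}>1$ and $p_{+}<\infty$ guarantees that the associate space of $L^{p(\cdot)}(a,b)$ is (equivalent to) $L^{q(\cdot)}(a,b)$ with $1/p(x)+1/q(x)=1$, so the hypotheses $v\in L^{p(\cdot)}(a,b)=E$ and $u\in L^{q(\cdot)}(a,b)=E'$ translate into the $u\in E'$, $v\in E$ assumption of the main theorem.

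Next I would handle the strict convexity of $E$ and of $E^{\ast}$. Because $1<p_{-}\leq p_{+}<\infty$, the variable exponent space $L^{p(\cdot)}(a,b)$ is uniformly convex (standard result in the variable-exponent literature, e.g.\ \cite{DHHR}), hence in particular strictly convex and reflexive. Reflexivity identifies $E^{\ast}$ with the associate space $L^{q(\cdot)}(a,b)$ (up to equivalence of norms); and since $1<q_{-}\leq q_{+}<\infty$ as well, the same theorem gives the strict convexity of $L^{q(\cdot)}(a,b)$, and therefore of $E^{\ast}$. Strict convexity is preserved under passage to an equivalent norm only on the sphere, but here the norms actually coincide (or one works with the associate-space norm, which is what the paper already uses in Section \ref{s2}), so no subtlety arises.

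For the absolutely continuous norm conditions on both $E$ and $E'$, I would use the standard fact that $L^{r(\cdot)}$ has AC-norm whenever $r_{+}<\infty$: given $f\in L^{r(\cdot)}$ and sets $X_n$ with $\chi_{X_n}\to 0$ a.e., the modular $\int|f\chi_{X_n}|^{r(x)}\,dx\to0$ by dominated convergence, and the finiteness of $r_{+}$ lets one pass from modular convergence to norm convergence. Applied with $r=p(\cdot)$ and then with $r=q(\cdot)$ (using $q_{+}=p_{-}/(p_{-}-1)<\infty$), this gives AC-norm for both $E$ and $E'$. Once these checks are in place, Theorem \ref{main result} applies and yields the two-sided asymptotic estimate stated in the corollary, with constants $C_1,C_2$ depending only on $L^{p(\cdot)}(a,b)$. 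The main (and only) point to be careful about is the identification of the dual and associate spaces together with the transfer of strict convexity through that identification; everything else is a direct citation of known properties of variable Lebesgue spaces.
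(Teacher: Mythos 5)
Your proposal is correct and follows exactly the route the paper takes: the paper derives the corollary by combining Theorem \ref{main result} with Proposition \ref{minexample}, leaving the remaining hypotheses (strict convexity of $E$ and $E^{\ast}$, AC-norms of $E$ and $E'$, identification of $E'$ with $L^{q(\cdot)}$) as standard facts from the variable-exponent literature, which you have simply spelled out. No discrepancy.
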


An analogue of Theorem \ref{main result}  in the setting of spaces with variable exponent when $u=v=1$ was investigated in \cite{ELN}, where the following theorem was proved.
\begin{thm}
Let $p(\cdot)\in\mathcal{SP}_{\log}$ and $u=v=1.$ Then $T$ acts from the variable exponent space 
	$ L^{p(\cdot)}(a,b)$  	to itself and 
$$
\lim\limits_{n\rightarrow\infty}ns'_{n}(T)=\frac{1}{2\pi}\int_{I}(q(x)p(x)^{p(x)-1})^{1/p(x0}\sin(\pi/p(x))dx,
$$
where $s'_{n}(T)$ stands for any of the $n$-th approximation, Gelfand, Kolmogorov and Bernstein numbers of $T.$
\end{thm}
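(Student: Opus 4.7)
The plan is to leverage the sharp constant-exponent asymptotic of Theorem~\ref{t1.5} via a localization argument, with the strong log-Hölder hypothesis $p(\cdot) \in \mathcal{SP}_{\log}$ ensuring that the reduction to constant exponent is asymptotically lossless (up to multiplicative errors that vanish in the appropriate iterated limit). The idea is: partition $I$ into short subintervals on each of which $L^{p(\cdot)}$ is essentially a constant-exponent space, apply Theorem~\ref{t1.5} piece by piece, and then combine via subadditivity of $a_n$ and superadditivity of $b_n$.

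Fix $\delta > 0$ and partition $I = (a,b)$ into disjoint subintervals $J_i = (t_{i-1}, t_i)$, $i = 1, \dots, N$, with $\max_i |J_i| \le \delta$, choosing a representative value $p_i$ of $p$ on $J_i$. For $x \in J_i$ the splitting $Tf(x) = \int_a^{t_{i-1}} f(t)\,dt + \int_{t_{i-1}}^x f(t)\,dt$ yields $T = \sum_{i=1}^N T_i + F$, where $T_i = \chi_{J_i} T \chi_{J_i}$ is the local Hardy operator on $J_i$ and $F$ has rank at most $N - 1$. The bound $|J_i|^{|p(x) - p_i|} \le e^{\psi(\delta)}$ supplied by $\mathcal{SP}_{\log}$ gives, via a direct modular estimate,
$$
(1 - \eta(\delta))\|f\|_{L^{p_i}(J_i)} \le \|f\|_{L^{p(\cdot)}(J_i)} \le (1 + \eta(\delta))\|f\|_{L^{p_i}(J_i)}
$$
for every $f$ supported in $J_i$, with $\eta(\delta) \to 0$. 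Thus the $s$-numbers of $T_i$ computed in $L^{p(\cdot)}(J_i)$ and in $L^{p_i}(J_i)$ agree up to a factor $1 + o_\delta(1)$, and Theorem~\ref{t1.5} applied to $T_i$ on $L^{p_i}(J_i)$ with $u=v=1$ delivers $\lim_n n\,s_n(T_i) = \tfrac{1}{2}\gamma_{p_i}|J_i|(1 + o_\delta(1))$, where $\gamma_{p_i} = \tfrac{1}{\pi}(q_i p_i^{p_i - 1})^{1/p_i}\sin(\pi/p_i)$.

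By Theorem~\ref{t1.4} it suffices to establish the matching bounds $\limsup_n n\,a_n(T) \le C$ and $\liminf_n n\,b_n(T) \ge C$, with $C$ equal to the target integral. The upper bound follows from the subadditivity $a_{n_1 + \cdots + n_N + N}(T) \le \sum_i a_{n_i}(T_i)$ (absorbing $F$) combined with the choice of $n_i$ that equalizes the rescaled quantities $n_i\,s_{n_i}(T_i)/(|J_i|\gamma_{p_i})$; a standard counting argument gives $\limsup_n n\,a_n(T) \le \sum_i \tfrac{1}{2}\gamma_{p_i}|J_i|(1+o_\delta(1))$. For the lower bound I would take Bernstein-optimal subspaces $M_{m}^{(i)} \subset \chi_{J_i}L^{p_i}(J_i)$ of dimension $m$ on which $\|T_i x\| \ge (1-o_\delta(1))\tfrac{1}{2m}\gamma_{p_i}|J_i|\|x\|$, assemble them into $M_n = \bigoplus_i M_{m_i}^{(i)}$ inside $L^{p(\cdot)}(I)$, and use the $\mathcal{M}$-property of $L^{p(\cdot)}$ (Proposition~\ref{minexample}) to control the disjointly-supported sum norm from below by the sum of the local norms. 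Optimizing over $m_i$ gives $\liminf_n n\,b_n(T) \ge \sum_i \tfrac{1}{2}\gamma_{p_i}|J_i|(1-o_\delta(1))$. Letting $\delta \to 0$, continuity of $p$ converts the Riemann sums $\sum_i \gamma_{p_i}|J_i|$ into $\int_I \gamma_{p(x)}\,dx$, producing the claimed identity.

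The main obstacle is the lower bound: one must glue local Bernstein subspaces into a global subspace of $L^{p(\cdot)}(I)$ without losing the leading constant. This requires that the variable-exponent norm on a disjoint sum $\sum f_i$ (with $\mathrm{supp}\,f_i \subset J_i$) is controlled from below by the natural sequence-space sum of the norms $\|f_i\|_{L^{p(\cdot)}(J_i)}$, up to a factor $1 + o_\delta(1)$. The $\mathcal{M}$-class property supplies exactly the quasi-additivity needed, but to preserve the \emph{leading} constant under refinement one genuinely needs the stronger hypothesis $p(\cdot) \in \mathcal{SP}_{\log}$: it forces the oscillation of $p$ on $J_i$ to decay fast enough (via the modulus $\psi$) that both the pinch error $\eta(\delta)$ and the assembly loss vanish before the iterated limit $\lim_{\delta\to 0}\limsup_n$ is taken. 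Plain log-Hölder regularity, as in Corollary~\ref{c2.2}, produces only two-sided bounds with an uncontrolled gap — matching Theorem~\ref{main result} rather than this sharper asymptotic.
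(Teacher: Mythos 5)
First, a point of orientation: the paper does not prove this statement. It is quoted verbatim as a known result of \cite{ELN} (for context and comparison with Theorem \ref{main result}), so there is no in-paper proof to match your argument against; your proposal has to stand on its own. Its skeleton --- localize to short intervals, compare with the constant-exponent asymptotic of Theorem \ref{t1.5}, reassemble, and let the mesh tend to zero --- is indeed the strategy used in \cite{ELN}. But as written it has a fatal step.

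The gap is the claimed two-sided estimate $(1-\eta(\delta))\|f\|_{L^{p_i}(J_i)}\le\|f\|_{L^{p(\cdot)}(J_i)}\le(1+\eta(\delta))\|f\|_{L^{p_i}(J_i)}$ for \emph{every} $f$ supported in $J_i$. This is false: for finite-measure sets one has $L^{p(\cdot)}(J_i)\hookrightarrow L^{q(\cdot)}(J_i)$ if and only if $q\le p$ a.e., so whenever $p$ is non-constant on $J_i$ at least one of the two inclusions $L^{p(\cdot)}(J_i)\subset L^{p_i}(J_i)$, $L^{p_i}(J_i)\subset L^{p(\cdot)}(J_i)$ fails --- one of your inequalities does not hold with \emph{any} finite constant, let alone $1+\eta(\delta)$. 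Your heuristic $|J_i|^{|p(x)-p_i|}\le e^{\psi(\delta)}$ controls functions spread over all of $J_i$ (e.g.\ characteristic functions of $J_i$, or the extremal functions for $\mathcal{A}(J_i)$), but not functions concentrated on a subset $A\subset J_i$ with $|A|\ll|J_i|$, for which the relevant factor is $|A|^{-|p(x)-p_i|}$ and is unbounded for fixed $\delta$. The repair is to use the one-sided embeddings $L^{p(\cdot)}(J_i)\hookrightarrow L^{p^-_{J_i}}(J_i)$ and $L^{p^+_{J_i}}(J_i)\hookrightarrow L^{p(\cdot)}(J_i)$, whose constants genuinely tend to $1$ under $\mathcal{SP}_{\log}$, and then squeeze using the continuity of $p\mapsto\gamma_p$; this is what the cited proof does. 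A second, related problem: in both your upper-bound assembly ($\max$ over disjointly supported pieces) and your lower-bound gluing you invoke the $\mathcal{M}$-property / uniform $\ell$-estimates, but these supply only a fixed constant $C>1$ independent of the partition, not $1+o_\delta(1)$; that is enough for the two-sided bounds of Theorem \ref{main result} and Corollary \ref{c2.2} but destroys the sharp constant you are trying to prove. To keep the leading constant one must exploit the exact additivity of the modular over disjoint supports rather than the Banach-lattice estimates.
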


 \section{Properties of $\A$} \label{s4}

Here we establish properties of the function $\A$ which we shall need in the next section.

\begin{defn}\label{d3.1}
Let  $E$  be  a BFS,  $J$ be a subinterval of  $I=(a,b),$  $c\in [a,\,b]$, and suppose that   $u\in E'(J)$ and   $v\in E(J)$.
  We define
$$
\A(J)=\A(J,u,v)=\sup\limits_{f\in E,\,f\neq 0}\inf\limits_{\alpha\in\mathbb{R}}\frac{\|T_{c,J}f-\alpha v\|_{E(J)}}{\|f\|_{E(J)}},
$$
where
$$
T_{c,J}f(x)=v(x)\chi_{J}(x)\int_{c}^{x}f(t)u(t)\chi_{J}(t)dt.
$$
\end{defn}

 We prove some basic properties of  $\A(J).$  Choosing $\alpha=0$  we immediately obtain
$$
\A(J)\leq\|T_{c,J}\|\leq K\cdot A_{J}.
$$
 Note that for $d\in [a,b]$,
$$
T_{d,J}f(x)=T_{c,J}f(x)+v(x)\chi_{J}(x)\int_{d}^{c}f(t)u(t)\chi_{J}(t)dt
$$
and the number $\A(J,u,v)$ is independent of $c\in [a,b].$

\begin{lem}\label{normequal}
Let    $E$ be  a BFS,  $J$  be a subinterval of  $I$, and suppose that  $u\in E'(J)$  and $v\in E(J).$  Set
$$
\widetilde{\A}(J)=\sup\limits_{\|f\|_{E(J)}=1}\inf\limits_{|\alpha|\leq2\|u\|_{E'(J)}}\|T_{c,J}f-\alpha v\|_{E(J)}.
$$
Then $\A(J)=\widetilde{\A}(J).$
\end{lem}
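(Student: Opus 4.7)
The plan is to show the two inequalities separately, with the easy direction being one-line and the real content lying in bounding the size of the optimizing $\alpha$.

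\textbf{Easy direction.} By homogeneity, we can rewrite
\[ \A(J) = \sup_{\|f\|_{E(J)}=1} \inf_{\alpha\in\mathbb{R}} \|T_{c,J}f-\alpha v\|_{E(J)}.\]
Since the infimum in $\widetilde{\A}(J)$ is taken over a subset of $\mathbb{R}$, it is at least as large as the unrestricted infimum, and taking supremum over the same set of $f$ yields $\widetilde{\A}(J) \geq \A(J)$ immediately.

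\textbf{Hard direction.} I would fix $f$ with $\|f\|_{E(J)}=1$ and show that the map $g(\alpha)=\|T_{c,J}f-\alpha v\|_{E(J)}$ attains its infimum on $\mathbb{R}$ at some point $\alpha^{*}$ with $|\alpha^{*}|\leq 2\|u\|_{E'(J)}$; then
\[ \inf_{|\alpha|\leq 2\|u\|_{E'(J)}}\|T_{c,J}f-\alpha v\|_{E(J)} \leq g(\alpha^{*}) = \inf_{\alpha\in\mathbb{R}}\|T_{c,J}f-\alpha v\|_{E(J)},\]
which gives $\widetilde{\A}(J)\leq \A(J)$. Observe first that $g$ is $\|v\|_{E(J)}$-Lipschitz (via the triangle inequality), hence continuous, and that $g(\alpha)\to\infty$ as $|\alpha|\to\infty$ (since $g(\alpha)\geq |\alpha|\|v\|_{E(J)}-\|T_{c,J}f\|_{E(J)}$), so the infimum is attained. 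The pointwise estimate
\[ \Bigl|\int_{c}^{x}f(t)u(t)\chi_{J}(t)\,dt\Bigr| \leq \int_{J}|f u|\,dt \leq \|f\|_{E(J)}\|u\|_{E'(J)} = \|u\|_{E'(J)}\]
from H\"older's inequality (Section 2) combined with axiom 3 of a BFS yields $\|T_{c,J}f\|_{E(J)} \leq \|u\|_{E'(J)}\|v\|_{E(J)}$. At any minimizer $\alpha^{*}$ one has $g(\alpha^{*})\leq g(0)=\|T_{c,J}f\|_{E(J)}$, so the triangle inequality gives
\[ |\alpha^{*}|\,\|v\|_{E(J)} = \|\alpha^{*}v\|_{E(J)} \leq \|T_{c,J}f-\alpha^{*}v\|_{E(J)} + \|T_{c,J}f\|_{E(J)} \leq 2\|T_{c,J}f\|_{E(J)} \leq 2\|u\|_{E'(J)}\|v\|_{E(J)},\]
and dividing by $\|v\|_{E(J)}$ (the degenerate cases $\|v\|_{E(J)}=0$ or $\|u\|_{E'(J)}=0$ force $T_{c,J}\equiv 0$ and both quantities to be $0$) yields $|\alpha^{*}|\leq 2\|u\|_{E'(J)}$, as required.

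\textbf{Main obstacle.} There isn't a serious one: the whole argument is a straightforward triangle-inequality/H\"older computation. The only point that needs mild care is justifying the existence of a minimizer (rather than just a minimizing sequence) so that the infimum over the compact interval $[-2\|u\|_{E'(J)},2\|u\|_{E'(J)}]$ is directly comparable to the unrestricted infimum; this is handled by continuity plus coercivity of $g$.
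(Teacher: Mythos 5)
Your proof is correct and follows essentially the same route as the paper: both arguments rest on the H\"older bound $\|T_{c,J}f\|_{E(J)}\leq \|u\|_{E'(J)}\|v\|_{E(J)}\|f\|_{E(J)}$ together with the triangle inequality to show that any $\alpha$ with $|\alpha|>2\|u\|_{E'(J)}$ does worse than $\alpha=0$, so restricting the infimum changes nothing. The only cosmetic difference is that you package this via an attained minimizer $\alpha^{*}$ (using continuity and coercivity of $g$), whereas the paper simply compares the infimum over $|\alpha|>2\|u\|_{E'(J)}$ with the value at $\alpha=0$ directly, which makes the existence of a minimizer unnecessary.
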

\begin{proof}
 H\"{o}lder's inequality yields
$$
\|T_{c,J}\|\leq\|u\chi_{J}\|_{E'(J)}\|v\chi_{J}\|_{E(J)}.
$$
Let $\|f\|_{E(J)}=1$ and $ |\alpha|>2\|u\|_{E'(J)}.$ Then $|\alpha|>\frac{2\|T_{c,J}\|}{\|v\|_{E(J)}}$ and using the triangle inequality
we obtain
\begin{align*}
\|\alpha v-T_{c,J}f\|_{E(J)}&\geq |\alpha|\|v\|_{E(J)}-\|T_{c,J}\|\|f\|_{E(J)}\\
&>2\|T_{c,J}\|-\|T_{c,J}\|\\
&=\|T_{c,J}\|.
\end{align*}
 We have
\begin{align*}
 \|&T_{c,J}\|\\
&\geq \A(J)\\
&=\sup\limits_{\|f\|_{E(J)}=1}\min\{\inf\limits_{|\alpha|\leq2\|u\|_{E'(J)}}\|T_{c,J}f-\alpha v\|_{E(J)},
 \inf\limits_{|\alpha|>2\|u\|_{E'(J)}}\|T_{c,J}f-\alpha v\|_{E(J)}\}\\
&=\sup\limits_{\|f\|_{E(J)}=1}
 \inf\limits_{|\alpha|\le 2\|u\|_{E'(J)}}\|T_{c,I}f-\alpha v\|_{E(J)}=\widetilde{\A}(J).
\end{align*}
\end{proof}

Note that using the same arguments we may prove that
$$
\A(J)=\sup\limits_{\|f\|_{E(J)}\leq1}\inf\limits_{|\alpha|\leq2\|u\|_{E'(J)}}\|T_{c,J}f-\alpha v\|_{E(J)}.
$$
\begin{lem}
Let    $E$ be a  BFS and  the dual  $E^*$ of the space $E$  has AC- norm.
Let $J$  be a subinterval of  $I$, and suppose that  $u\in E'(J)$ and $v\in E(J).$  Then:

{\rm 1. } The function $\A(x,d)$ is non-increasing and continuous on $(c,d).$

{\rm 2.}   The function $\A(c,x)$ is non-decreasing and continuous on $(c,d).$

{\rm 3.}  $\lim\limits_{x\rightarrow c-}\A(c,x)=\lim\limits_{x\rightarrow d+}\A(x,d)=0.$

\end{lem}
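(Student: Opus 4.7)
The plan is to handle the three parts in order. Monotonicity in parts 1 and 2 follows by extension by zero: given nested intervals $J_1\subset J_2$ sharing the base point $c$, any $f\in E(J_1)$ extends to $\tilde f\in E(J_2)$ with $\|\tilde f\|_{E(J_2)}=\|f\|_{E(J_1)}$ by the BFS axioms, and on $J_1$ the two operators agree: $(T_{c,J_2}\tilde f)\chi_{J_1}=T_{c,J_1}f$. Consequently $\|T_{c,J_2}\tilde f-\alpha v\|_{E(J_2)}\ge\|T_{c,J_1}f-\alpha v\|_{E(J_1)}$, and taking $\inf_\alpha$ then $\sup_f$ yields $\A(J_1)\le\A(J_2)$. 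Specializing to $(c,x_1)\subset(c,x_2)$ and to $(x_2,d)\subset(x_1,d)$ gives both monotonicity statements. For the boundary vanishing in part 3, from $\alpha=0$ in the definition and Theorem \ref{t1.2}, $\A(J)\le K A_J\le K\|v\chi_J\|_E\|u\chi_J\|_{E'}$; for $J=(c,x)$ the first factor is bounded while the AC-norm hypothesis forces $\|u\chi_{(c,x)}\|_{E'}\to 0$ as $x\to c^+$. The analogous limit $\A(x,d)\to 0$ as $x\to d^-$ is symmetric.

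The substantive step is continuity. By monotonicity, one-sided limits of $x\mapsto\A(c,x)$ exist; the task is to match them to $\A(c,x_0)$ at each interior $x_0$. For right-continuity, take $x_0<x$ close and a test function $f$ on $(c,x)$ with $\|f\|_{E(c,x)}=1$, decompose $f=f\chi_{(c,x_0)}+f\chi_{(x_0,x)}$, and use the identity $(T_{c,(c,x)}f)\chi_{(c,x_0)}=T_{c,(c,x_0)}(f\chi_{(c,x_0)})$. The triangle inequality then yields
\[
\inf_\alpha\|T_{c,(c,x)}f-\alpha v\|_{E(c,x)}\le\A(c,x_0)+R(x_0,x),
\]
where Lemma \ref{normequal} confines $\alpha$ to $|\alpha|\le 2\|u\|_{E'(c,x)}$, bounding the remainder $R$ by a constant multiple of $\|v\chi_{(x_0,x)}\|_E\bigl(1+\|u\chi_{(x_0,x)}\|_{E'}\bigr)$. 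Left-continuity uses the mirror argument, zero-extending a near-extremizer from $(c,x')$ to $(c,x_0)$ for $x'<x_0$; continuity of $x\mapsto\A(x,d)$ is treated analogously.

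The main obstacle lies in that thin-strip estimate. The AC-norm hypothesis directly gives $\|u\chi_{(x_0,x)}\|_{E'}\to 0$, but one also needs to dispose of $\|v\chi_{(x_0,x)}\|_E$, which on its face asks for AC-norm of $E$ itself. I plan to route this through duality: write $\|v\chi_{(x_0,x)}\|_E=\sup\{\int v h\,\chi_{(x_0,x)}\,dx:\|h\|_{E'}\le 1\}$ and exploit the concentration of the extremizing $h$ in the thin strip to transfer the vanishing onto an $E'$-side quantity that the hypothesis does control. This duality step, together with the bound $|\alpha|\le 2\|u\|_{E'(c,x)}$ supplied by Lemma \ref{normequal}, is the technical heart of the argument.
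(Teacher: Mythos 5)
Your treatment of monotonicity (extension by zero) and of part 3 (H\"older's inequality plus absolute continuity of the norm of $u$ in $E'$, with $\|v\chi_J\|_E$ merely bounded) is sound and agrees in substance with the paper, which dismisses monotonicity as easy and treats part 3 the same way as part 1. The problem is in the continuity step, precisely at the point you yourself flag as the technical heart. Your decomposition leaves a remainder of size $C\|v\chi_{(x_0,x)}\|_{E}\bigl(1+\|u\chi_{(x_0,x)}\|_{E'}\bigr)$, so you need $\|v\chi_{(x_0,x)}\|_{E}\to 0$ as $x\to x_0$. That is exactly the statement that $v$ has absolutely continuous norm \emph{in $E$}, and it cannot be extracted from the AC-norm hypothesis on $E^{*}$ (equivalently on $E'$) by duality: in $\|v\chi_{(x_0,x)}\|_{E}=\sup\{\int v h\chi_{(x_0,x)}\,dt:\ \|h\|_{E'}\le 1\}$ the near-maximizing $h$ changes with the strip, so the vanishing of $\|h\chi_{(x_0,x)}\|_{E'}$ for each \emph{fixed} $h$ says nothing about the supremum. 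A concrete obstruction: for $E=L^{\infty}$ and $v\equiv 1$ one has $\|v\chi_{(x_0,x)}\|_{E}\equiv 1$ for every $x>x_0$, while $E'=L^{1}$ has absolutely continuous norm. So the route you propose for killing the remainder fails.

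The paper's proof is arranged so that every remainder term carries the small factor $\|u\chi_{\mathrm{thin}}\|_{E'}$ rather than $\|v\chi_{\mathrm{thin}}\|_{E}$. Comparing $\A(y-h,d)$ with $\A(y,d)$, it first fixes $h_0$ with $\|u\|_{E'(y-h,y)}<\varepsilon$ for $0<h\le h_0$; it then bounds the contribution of $T$ on the strip $(y-h,y)$ by the operator norm $\|T_{y-h,(y-h,y)}\|\le\|u\|_{E'(y-h,y)}\|v\|_{E(y-h,y)}\le\varepsilon\|v\|_{E(y-h,y)}$, and the carry-over term $v(x)\chi_{(y,d)}(x)\int_{y-h}^{y}fu\,dt$ by $\|u\|_{E'(y-h,y)}\|v\|_{E(y,d)}\le\varepsilon\|v\|_{E(y,d)}$, while Lemma \ref{normequal} keeps $\alpha$ in a fixed compact set throughout; the $v$-norms are only required to be bounded, never small. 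If you want to salvage your version you must either redo the splitting in this fashion, or add the hypothesis that $E$ itself (or at least the particular function $v$) has absolutely continuous norm --- which is what the main theorem assumes, but is not something the duality argument can deliver.
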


\begin{proof} That $\A(x,d)$ is non-increasing is easy to see.   Fix $y,c<y<d.$ Let $\varepsilon>0.$
Fix $h_{0}>0$ such that  $y-h_{0}>0$ and
$ \|u\|_{E'(y-h,y)}<\varepsilon $ for  $0<h\leq h_{0}.$

Let $D_{h}=\|u\|_{E'(y-h,d)}\,\,\,(0\leq h\leq h_{0})$ and  $w(y)=\chi_{(y,d)}\int_{y-h}^{y}f(t)u(t)dt.$

We have
\begin{align*}
\A(y,d)&\leq\A(y-h,d)\\
&=\sup\limits_{\|f\|_{E(y-h,d)}=1}\inf\limits_{\alpha\in\mathbb{R}}\|\alpha v-T_{y-h,(y-h,d)}f\|_{E((y-h,d))}&\\
&=\sup\limits_{\|f\|_{E(y-h,d)}=1}\inf\limits_{|\alpha|\leq2D_{h}}\{\|(\alpha v-T_{y-h,(y-h,d)}f)\chi_{(y-h,y)}\|_{E((y-h,y))}\\
&\hskip+1cm+\|(\alpha v-T_{y-h,(y-h,d)}f)\chi_{(y,d)}\|_{E((y,d))}\}\\
&\leq\sup\limits_{\|f\|_{E(y-h,d)}=1}\inf\limits_{|\alpha|\leq2D_{h}}\{\|T_{y-h,(y-h,y)}|E((y-h,y))\rightarrow E((y-h,y))\|\times\\
&\hskip+1cm\times \|f\|_{E((y-h,y))}+\|(\alpha v-T_{y,(y-h,d)}f-vw)\chi_{(y,d)}\|_{E((y,d))}\}\\
&\leq\sup\limits_{\|f\|_{E(y-h,d)}=1}\inf\limits_{|\alpha|\leq2D_{h}}\left\{\|u\|_{E'((y-h,y))}\|v\|_{E((y-h,y))}\|f\|_{E((y-h,y))}\right.\\
&\hskip+1cm+\left. \|v\|_{E((y,d))}\|u\|_{E'((y-h,y))}\|f\|_{E((y-h,y))}\right. \\
&\hskip+1cm +\left. \|(\alpha v-T_{y,(y,d)}f)\chi_{(y,d)}\|_{E((y,d))}\right\}\\
&\leq\|v\|_{E((y-h,y))}\varepsilon+\|v\|_{E((y,d))}\varepsilon\\
&\hskip+1cm +\sup\limits_{\|f\|_{E(y-h,d)}=1}\inf\limits_{|\alpha|\leq2D_{h}}\|T_{y,(y,d)}f-\alpha v\|_{E((y,d))}.
\end{align*}
Since $D_{0}\leq D_{h}\leq D_{h_{0}}$
we have
\begin{align*}
\sup\limits_{\|f\|_{E((y-h,d))}=1}&\inf\limits_{|\alpha|\leq2D_{h}}\|T_{y,(y,d)}f-\alpha v\|_{E((y,d))}\\
&\leq\sup\limits_{\|f\|_{E((y-h,d))}=1}\inf\limits_{|\alpha|\leq2D_{0}}\|T_{y,(y,d)}f-\alpha v\|_{E((y,d))}\\
&=\sup\limits_{\|f\|_{E((y,d))}\leq1}\inf\limits_{|\alpha|\leq2D_{0}}\|T_{y,(y,d)}f-\alpha v\|_{E((y,d))}\\
&=\A(y,d)
\end{align*}
and thus
$$
\A(y,d)\leq\A(y-h,d)\leq\|v\|_{E((y-h,y))}\varepsilon+\|v\|_{E((y,d))}\varepsilon+\A(y,d),
$$
which proves that
$$
\lim\limits_{h\rightarrow0+}\A(y-h,d)=\A(y,d).
$$
Analogously
$$
\lim\limits_{h\rightarrow0+}\A(y+h,d)=\A(y,d).
$$

In the same way we prove 2 and 3,
which finishes the proof of the lemma.
\end{proof}

\begin{lem}\label{equivalencenorms}
Let  $E $  be a BFS  satisfying the condition \eqref {min} and suppose that $E'$ has AC-norm. Let $J=(c,d)$  be a subinterval of  $I$,   and suppose that $u\in E'(J)$ and $v\in E(J)$. Then
\begin{equation}\label{3.1}
\A(J)\leq\inf\limits_{x\in J}\|T_{x,J}\,|E(J)\rightarrow E(J)\|.
\end{equation}
 The norms $\|T_{x,J}\|,$ $\|T_{x,(c,x)}\|,$ $\|T_{x,(x,d)}\|$  of the operators $T_{x,J}$ $T_{x,(c,x)},$ $T_{x,(x,d)},$ from $E(J)$ to $E(J),$
are continuous in $x\in (c,d)$ and there exists $e\in J$ such that
\begin{equation}\label{3.2}
\|T_{e,(c,e)}\|=\|T_{e,(e,d)}\|.
\end{equation}
For any $x\in{J}$
\begin{equation}\label{3.3}
\|T_{x,J}\|\approx\max\{\|T_{x,(c,x)}\|,\,\,\|T_{x,(x,d)}\|\},
\end{equation}
and
\begin{equation}\label{3.4}
\min\limits_{x\in J}\|T_{x,J}\|\approx\|T_{e,J}\|.
\end{equation}
\end{lem}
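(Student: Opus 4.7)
My plan is to prove the five assertions in sequence, relying on two main techniques: (i) the identity that shifting the base point of $T$ produces a rank-one perturbation along $v$, and (ii) decomposing $J=(c,d)$ as the disjoint union $(c,x)\cup(x,d)$ relative to $T_{x,J}$. For \eqref{3.1}, I exploit (i): for any $x_0\in J$,
\[
T_{c,J}f-\alpha v \;=\; T_{x_0,J}f + \Bigl(\int_c^{x_0}fu\,dt - \alpha\Bigr) v\chi_J.
\]
Choosing $\alpha=\int_c^{x_0}fu\,dt$ and passing to operator norms gives $\inf_\alpha\|T_{c,J}f-\alpha v\|_{E(J)}\le\|T_{x_0,J}\|\,\|f\|_{E(J)}$; taking the supremum over $f$ with $\|f\|_{E(J)}=1$ and the infimum over $x_0\in J$ yields \eqref{3.1}.

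For the continuity statements, the same identity at two base points gives $(T_{x_1,J}-T_{x_2,J})f = v\chi_J\int_{x_1}^{x_2}fu\,dt$, hence by H\"older's inequality $\|T_{x_1,J}-T_{x_2,J}\|\le\|v\chi_J\|_E\,\|u\chi_{(x_1,x_2)}\|_{E'}$, which tends to $0$ as $|x_1-x_2|\to 0$ by absolute continuity of the $E'$-norm. For $\phi_1(x):=\|T_{x,(c,x)}\|$ and $\phi_2(x):=\|T_{x,(x,d)}\|$, whose natural domains vary with $x$, I recast them as operator norms on the fixed space $E(J)$ via extension by zero; the resulting difference then splits into a rank-one piece and a truncation piece, each controlled by $\|v\chi_J\|_E\,\|u\chi_{(x_1,x_2)}\|_{E'}$, again vanishing by the AC-norm of $E'$. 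Extension by zero also makes $\phi_1$ nondecreasing and $\phi_2$ nonincreasing, with $\phi_1(c^+)=\phi_2(d^-)=0$ (from $\|T_{x,(c,x)}\|\le\|v\|_E\|u\chi_{(c,x)}\|_{E'}\to 0$). The existence of $e\in J$ with $\phi_1(e)=\phi_2(e)$, i.e.\ \eqref{3.2}, then follows from the intermediate value theorem applied to the continuous function $\phi_1-\phi_2$.

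For \eqref{3.3}, I use technique (ii): writing $f_1=f\chi_{(c,x)}$ and $f_2=f\chi_{(x,d)}$, a direct calculation gives $T_{x,J}f=T_{x,(c,x)}f_1+T_{x,(x,d)}f_2$ with disjoint supports $(c,x)$ and $(x,d)$. Monotonicity of the BFS norm yields $\|T_{x,J}f\|_E\ge\max(\|T_{x,(c,x)}f_1\|_E,\|T_{x,(x,d)}f_2\|_E)$, the triangle inequality yields $\|T_{x,J}f\|_E\le\|T_{x,(c,x)}f_1\|_E+\|T_{x,(x,d)}f_2\|_E$, and combining with $\|f_i\|_E\le\|f\|_E$ produces $\max(\phi_1(x),\phi_2(x))\le\|T_{x,J}\|\le 2\max(\phi_1(x),\phi_2(x))$. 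Finally, \eqref{3.4} follows from the elementary fact that, for a nondecreasing $\phi_1$ and a nonincreasing $\phi_2$ meeting at $e$, $\min_{x\in J}\max(\phi_1(x),\phi_2(x))=\phi_1(e)=\phi_2(e)$; combining this with \eqref{3.3} applied at a general $x$ and at $e$ gives $\min_{x\in J}\|T_{x,J}\|\asymp\|T_{e,J}\|$.

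The main obstacle I anticipate is the continuity of $\phi_1$ and $\phi_2$, since these operators act on the sliding space $E(c,x)$ and a naive subtraction at different values of $x$ is ill-posed. The extension-by-zero reformulation recasts the question on the fixed space $E(J)$, after which the argument parallels that for $\|T_{x,J}\|$ and depends only on the AC-norm of $E'$.
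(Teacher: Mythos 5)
Your proof is correct and follows essentially the same route as the paper's: the rank-one base-point-shift identity for \eqref{3.1}, continuity of the norms via H\"older's inequality together with the AC-norm of $E'$, monotonicity plus the intermediate value theorem for the existence of $e$ in \eqref{3.2}, and the disjoint-support splitting $f=f\chi_{(c,x)}+f\chi_{(x,d)}$ for \eqref{3.3} and \eqref{3.4}. The only cosmetic difference is that for the upper bound in \eqref{3.3} you use the triangle inequality (with constant $2$) where the paper invokes condition \eqref{min}; both are valid, and you are in fact slightly more explicit than the paper about the endpoint limits $\|T_{x,(c,x)}\|\to 0$ as $x\to c^{+}$ needed for the intermediate value argument.
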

\begin{proof}
For any $x\in (c,d),$
$$
\A(J)\leq\sup\{\|T_{x,J}f\|_{E(J)}:\,\|f\|_{E(J)}=1\}=\|T_{x,J}|\,E(J)\rightarrow E(J)\|,
$$
and consequently we have \eqref{3.1}.

To prove the continuity of $\|T_{x,(x,d)}\|,$  we first note that for $z,y\in(c,d),z<y,$
\begin{align*}
T_{z,(z,d)}f(x)-T_{y,(y,d)}f(x)&=v(x)\chi_{(y,d)}(x)\int_{z}^{y}f(t)u(t)dt\\
&\hskip+0,5cm
+v(x)\chi_{(z,y)}(x)\int_{z}^{y}f(t)u(t)dt.
\end{align*}
Hence, applying H\"{o}lder's inequality,
$$
\|T_{z,(z,d)}-T_{y,(y,d)}\|\leq\|v\|_{E((y,d))}\|u\|_{E'((z,y))}+\|v\|_{E((z,y))}\|u\|_{E'((z,y))}
$$
and so
$$
\left|\|T_{z,(z,d)}\|-\|T_{y,(y,d)}\|\right|\leq\|T_{z,(z,d)}-T_{y,(y,d)}\|
\leq2\|u\|_{E'((z,y)}\|v\|_{E((z,d))},
$$
which yields the continuity of $\|T_{x,(x,d)}\|.$  Similarly we obtain  the continuity
 for $\|T_{x,(c,x)}\|$ and $\|T_{x,J}\|.$

If $\mbox{supp}f\subset (y,d)$ then for $z<y,$
$$
T_{z,(z,d)}f(x)=T_{y,(y,d)}f(x).
$$
Consequently
$$
\|T_{y,(y,d)}\|\leq \|T_{z,(z,d)}\|
$$
and similarly
$$
\|T_{z,(c,z)}\|\leq \|T_{y,(c,y)}\|.
$$

The identity \eqref{3.2} follows from these inequalities and the continuity of the norms $\|T_{x,(c,x)}\|,\|T_{x,(x,d)}\|.$

Let $f\in E(J)$ and set $f_{1}=f\chi_{(c,x)},\,\,f_{2}=f\chi_{(x,d)}.$ Then
$$
(T_{x,J}f)(t)=(T_{x,(c,x)}f_{1})(t)+(T_{x,(x,d)}f_{2})(t).
$$
We have
\begin{align*}
\|T_{x,J}f\|_{E(J)}& \approx \max \{\|T_{x,(c,x)}f_{1}\|_{E((c,x))},\|T_{x,(x,d)}f_{2}\|_{E((x,d))}\}\\
&\leq C \max\{\|T_{x,(c,x)}\|,\|T_{x,(x,d)}\|\}\max \{\|f_{1}\|_{E((c,x))},\|f_{2}\|_{E((x,d))}\}\\
&\leq C\max\{\|T_{x,(c,x)}\|,\|T_{x,(x,d)}\|\}\|f\|_{E(J)}.
\end{align*}
Consequently
$$
\|T_{x,J}\|\leq C\max\{\|T_{x,(c,x)}\|,\|T_{x,(x,d)}\|\}.
$$
The reverse inequality is obvious and \eqref{3.3} is proved.  From \eqref{3.2}, \eqref{3.3} and the above analysis,
we have \eqref{3.4}.
\end{proof}

\begin{defn}
Let  $E $  be a BFS satisfying the condition \eqref{min} and suppose that $E'$ has AC-norm.  Let $J=(c,d)$ be a subinterval of  $I$,  and suppose that $u\in E'(J)$  and  $v\in E(J).$ Define
$$
\widehat{\A}(J)=\|T_{e,(c,e)}\|
$$
where $e\in J$ defined by \ref{3.2}.
\end{defn}

\begin{lem}\label{strictly decreasing and increasing}
Let   $E $ be a BFS satisfying the condition \eqref{min} and suppose that $E'$ has AC-norm; let  $J$  be a subinterval of  $I$,   and suppose that  $u\in E'(J)$ and $v\in E(J)$.
 Then

{\rm 1,}  $\|T_{x,(c,x)}\|$ is strictly increasing  and $\|T_{x,(x,d)}\|$ is strictly decreasing on $(c,d).$

{\rm 2.}   $\widehat{\A}(c,x)$ is strictly increasing  and $\widehat{\A}(x,d)$ is strictly decreasing on $(c,d).$

\end{lem}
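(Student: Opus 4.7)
The plan is to establish statement 1 first and derive statement 2 from it.

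\textbf{For statement 1}, I would focus on $\|T_{x,(c,x)}\|$ being strictly increasing; the strict decrease of $\|T_{x,(x,d)}\|$ follows by the symmetric argument with the roles of the two endpoints interchanged. The non-strict monotonicity has already been recorded in the proof of Lemma \ref{equivalencenorms}: if $c<z<y<d$ and $f\in E((c,z))$ is extended by zero to $\tilde f\in E((c,y))$, then $T_{y,(c,y)}\tilde f = T_{z,(c,z)}f$ on $(c,z)$ and vanishes on $(z,y)$, so $\|T_{z,(c,z)}\|\le\|T_{y,(c,y)}\|$. To upgrade this to a strict inequality, I would argue by contradiction: assume equality and construct a specific $\tilde f\in E((c,y))$ that beats the common norm.

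Since the operator and the $E$-norm are both monotone in $|f|$, the supremum defining $\|T_{z,(c,z)}\|$ is approximated to within $\eta$ by some $f_0\ge 0$ with $\|f_0\|_{E((c,z))}=1$. Set $\tilde f:=f_0+\lambda\chi_{(z,y)}$ for $\lambda>0$ small. A direct computation gives, for $s\in(c,z)$,
\[
T_{y,(c,y)}\tilde f(s)=T_{z,(c,z)}f_0(s)-\lambda v(s)\int_z^y u(t)\,dt.
\]
Because $f_0,u,v\ge 0$, the first summand equals $-v(s)\int_s^z f_0u\le 0$ on $(c,z)$, so both summands have the same non-positive sign and their absolute values add. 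Using $v>0$ a.e.\ and $\int_z^y u>0$ (consequences of $|\{u=0\}|=|\{v=0\}|=0$), the modulus $|T_{y,(c,y)}\tilde f|$ on $(c,z)$ strictly exceeds $|T_{z,(c,z)}f_0|$ on a set of full measure; passing to $E$-norms and choosing $\lambda$ small enough to control the growth of $\|\tilde f\|_E$ against this gain yields $\|T_{y,(c,y)}\tilde f\|_E/\|\tilde f\|_E>\|T_{z,(c,z)}\|$, contradicting the assumed equality.

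\textbf{Statement 2} would then be deduced from statement 1. By definition $\widehat{\A}(c,x)=\|T_{e_x,(c,e_x)}\|$, where $e_x\in(c,x)$ is the balance point characterised by $\|T_{e_x,(c,e_x)}\|=\|T_{e_x,(e_x,x)}\|$, obtained from \eqref{3.2}. Statement 1 in both orientations tells me that for a fixed interval the first norm is strictly increasing and the second strictly decreasing in the interior argument, while enlarging the right endpoint $x$ strictly increases $\|T_{t,(t,x)}\|$ for fixed $t$. These monotonicities together force $e_x$ to shift strictly to the right as $x$ increases, and another application of statement 1 produces $\widehat{\A}(c,x')>\widehat{\A}(c,x)$ whenever $x<x'$. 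The strict decrease of $\widehat{\A}(x,d)$ is obtained symmetrically.

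\textbf{The main obstacle} is the passage from the pointwise strict inequality $|T_{y,(c,y)}\tilde f|>|T_{z,(c,z)}f_0|$ on $(c,z)$ to a strict inequality of $E$-norms. Strict monotonicity of a BFS norm under non-trivial additions is not automatic in general, so one has to exploit the hypothesis $E\in\mathcal{M}$ together with the AC-norm of $E'$; the Muckenhoupt-type condition \eqref{min} effectively allows one to compare local averages and thereby rule out any degenerate cancellation that would preserve $\||T_{z,(c,z)}f_0|\|_E$ while absorbing the positive perturbation $\lambda v\int_z^y u$. Balancing the size of $\lambda$ against the resulting increase in $\|\tilde f\|_E$ is a routine but delicate step and must be performed with some care.
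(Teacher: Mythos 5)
Your overall strategy (perturb a near\nobreakdash-extremal nonnegative $f_{0}$ for $T_{z,(c,z)}$ by $\lambda\chi_{(z,y)}$ and use that the two contributions to $T_{y,(c,y)}\tilde f$ on $(c,z)$ have the same sign) is a reasonable one, and it cannot really be compared with the paper, which disposes of this lemma in one sentence by asserting that strictness "follows from $|\{u=0\}|=|\{v=0\}|=0$". But your argument has a genuine unfilled hole, and it sits exactly where you flag it. Writing $c_{0}=\int_{z}^{y}u>0$ and $M=\|u\|_{E'((c,z))}$, the best pointwise information you get on $(c,z)$ is $|T_{y,(c,y)}\tilde f|=v(s)\bigl(\int_{s}^{z}f_{0}u+\lambda c_{0}\bigr)\ge(1+\lambda c_{0}/M)\,|T_{z,(c,z)}f_{0}|$, so the numerator gains at most a factor $1+\lambda c_{0}/M$, i.e. an increment of order $\lambda$. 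Meanwhile $\|\tilde f\|_{E}\le 1+\lambda\|\chi_{(z,y)}\|_{E}$ also grows at first order in $\lambda$, and by H\"older $c_{0}/M\le\|u\|_{E'((z,y))}\|\chi_{(z,y)}\|_{E}/\|u\|_{E'((c,z))}$, which can be strictly smaller than $\|\chi_{(z,y)}\|_{E}$. So the gain need not beat the loss for any $\lambda$, and no choice of "small $\lambda$" rescues this: both effects are linear. The reason the analogous argument works in $L^{p}$, $p>1$, is that disjointness of supports gives $\|f_{0}+\lambda\chi_{(z,y)}\|_{p}=(1+\lambda^{p}\|\chi_{(z,y)}\|_{p}^{p})^{1/p}=1+O(\lambda^{p})$, i.e. the denominator grows only at order $\lambda^{p}$; a general BFS has no such sublinearity (in $L^{1}$ the growth is exactly linear, and there $\|T_{z,(c,z)}\|=\operatorname{ess\,sup}_{t<z}u(t)\int_{c}^{t}v$ can be attained at an interior point, so the function is genuinely not strictly increasing). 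This shows the missing step is not "routine but delicate": some hypothesis beyond positivity of $u,v$ must enter, presumably the AC\nobreakdash-norm of $E'$ (which excludes $L^{1}$) — and your proof never uses it. Your appeal to $E\in\mathcal{M}$ is also off target, since this lemma assumes only \eqref{min}, not the Muckenhoupt condition.

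The reduction of statement 2 to statement 1 is sound in outline (monotone increasing versus monotone decreasing functions of $e$ force the balance point $e_{x}$ to move right as $x$ grows), but note that to get \emph{strict} increase of $\widehat{\A}(c,x)$ you additionally need strict monotonicity of $\|T_{t,(t,x)}\|$ in the \emph{outer} endpoint $x$ for fixed $t$; this is a third monotonicity statement with exactly the same unproven core as above, so statement 2 inherits the gap rather than avoiding it.
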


\begin{proof}
 The strictly monotonic properties  of the functions  $\|T_{x,(c,x)}\|$ and $\widehat{\A}(c,x)$   follow from the condition  $|\{x:\,u(x)=0\}|=|\{x:\,v(x)=0\||=0.$  If we use arguments analogous to those in the proof  of Lemma \ref{equivalencenorms} we  may prove  continuity of  $\widehat{\A}(c,x).$ 
\end{proof}

\begin{lem}\label{strictly} Let $E$  be a strictly  convex BFS.  Then  given any $f,\,e\in E$, $e\neq 0$
there is a unique scalar $c_{f}$ such that
$$
\|f-c_{f}e\|_{E}=\inf\limits_{c\in\mathbb{R}}\|f-ce\|_{E}.
$$
\end{lem}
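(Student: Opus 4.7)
The plan is to establish existence via a standard compactness/coercivity argument on the real-valued function $\phi(c)=\|f-ce\|_E$, and then deduce uniqueness from the strict convexity of $E$ by the usual midpoint trick.

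For existence, I would first observe that $\phi\colon\mathbb{R}\to\mathbb{R}$ is continuous (from the triangle inequality, $|\phi(c)-\phi(c')|\le|c-c'|\,\|e\|_E$) and coercive, since
\[
\phi(c)=\|f-ce\|_E\ge|c|\,\|e\|_E-\|f\|_E\longrightarrow\infty\quad(|c|\to\infty).
\]
Hence any minimizing sequence $\{c_n\}$ is bounded, admits a convergent subsequence $c_n\to c_f$, and by continuity $\phi(c_f)=\inf_{c\in\mathbb{R}}\phi(c)=:m$.

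For uniqueness, suppose $c_1\neq c_2$ both attain the infimum $m$. First handle the trivial case $m=0$: then $f=c_1e=c_2e$ forces $(c_1-c_2)e=0$, contradicting $e\neq 0$. So we may assume $m>0$ and set
\[
x=\frac{f-c_1e}{m},\qquad y=\frac{f-c_2e}{m},
\]
which satisfy $\|x\|_E=\|y\|_E=1$, and moreover $x\neq y$ because $x-y=(c_2-c_1)e/m\neq 0$. On the one hand,
\[
\Bigl\|\tfrac{1}{2}x+\tfrac{1}{2}y\Bigr\|_E=\frac{1}{m}\Bigl\|f-\tfrac{c_1+c_2}{2}e\Bigr\|_E\ge\frac{m}{m}=1,
\]
by the definition of $m$. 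On the other hand, the strict convexity of $E$ (applied with $\lambda=1/2$) gives $\|\tfrac12 x+\tfrac12 y\|_E<1$, a contradiction. Hence $c_f$ is unique.

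The only even mildly subtle point is to record why $m=0$ is not a separate obstacle and to check $x\neq y$; once those are noted, the argument is essentially routine, and no property of $E$ beyond continuity, completeness, and strict convexity is used.
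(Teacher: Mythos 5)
Your proof is correct and takes essentially the same approach as the paper, whose own argument is only a two-line sketch: existence of $c_f$ from the continuity and coercivity of $c\mapsto\|f-ce\|_E$ plus the local compactness of $\mathbb{R}$, and uniqueness from strict convexity. You simply fill in the details the paper leaves implicit (the midpoint contradiction and the separate treatment of the case $m=0$).
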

\begin{proof}
 Since $\|f-ce\|_{E}$  is continuous in $c$ and tends to $\infty$ as $c\rightarrow\infty$, the
existence of $c_{f}$ is guaranteed by the local compactness of $\mathbb{R}$. The uniqueness of $c_{f}$ follows
from the strict  convexity
of $E.$ 
\end{proof}

\begin{lem}\label{strictlycont}
Let $E$ be a strictly convex BFS and given $f\in E,$ let $c_{f}$ be the unique scalar such that $
\|f-c_{f}e\|_{E}=\inf\limits_{c\in\mathbb{R}}\|f-ce\|_{E},
$ for $e\neq0, \,e\in E.$  Then the map $f\mapsto c_{f}$ is continuous.
\end{lem}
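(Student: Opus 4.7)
The plan is to prove sequential continuity. Fix $e \neq 0$ in $E$ and suppose $f_n \to f$ in $E$; I want to show $c_{f_n} \to c_f$.

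First I would establish that the scalars $c_{f_n}$ form a bounded sequence. By definition of $c_{f_n}$ as a minimizer, taking $c=0$ in the infimum gives
\[
\|f_n - c_{f_n} e\|_E \leq \|f_n\|_E.
\]
Hence by the reverse triangle inequality,
\[
|c_{f_n}|\,\|e\|_E \leq \|f_n\|_E + \|f_n - c_{f_n} e\|_E \leq 2\|f_n\|_E,
\]
and since $\|f_n\|_E$ converges (hence is bounded), so is the sequence $\{c_{f_n}\}$.

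Next I would run a subsequence argument: it suffices to show that every convergent subsequence of $\{c_{f_n}\}$ has limit $c_f$. Pass to any subsequence $\{c_{f_{n_k}}\}$ converging to some $c^* \in \mathbb{R}$. By continuity of the norm and $f_{n_k} \to f$,
\[
\|f_{n_k} - c_{f_{n_k}} e\|_E \longrightarrow \|f - c^* e\|_E.
\]
On the other hand, by the minimizing property of $c_{f_{n_k}}$ applied with the test scalar $c_f$,
\[
\|f_{n_k} - c_{f_{n_k}} e\|_E \leq \|f_{n_k} - c_f e\|_E \longrightarrow \|f - c_f e\|_E.
\]
Combining these yields $\|f - c^* e\|_E \leq \|f - c_f e\|_E$, so $c^*$ is also a minimizer. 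By Lemma \ref{strictly}, the minimizer is unique, forcing $c^* = c_f$.

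Since $\{c_{f_n}\}$ is bounded in $\mathbb{R}$ and every convergent subsequence has the same limit $c_f$, the full sequence converges to $c_f$. The only subtlety, which is not really an obstacle but does need to be invoked cleanly, is the appeal to strict convexity through Lemma \ref{strictly} at the uniqueness step; the rest is a standard compactness-plus-uniqueness argument.
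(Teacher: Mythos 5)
Your proof is correct and follows essentially the same route as the paper: pass to a convergent subsequence of the bounded scalars, use the minimizing property with the test scalar $c_f$, take limits, and invoke the uniqueness from Lemma \ref{strictly}. You actually fill in two details the paper leaves implicit --- the boundedness of $\{c_{f_n}\}$ via the reverse triangle inequality, and the ``every convergent subsequence has the same limit'' step needed to upgrade from subsequential to full convergence --- so your write-up is, if anything, more complete.
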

\begin{proof}
Suppose $\|f_{n}-f\|_{E}\rightarrow0.$ Since $c_{f_{n}}$ is bounded, we may suppose that $c_{f_{n}}\rightarrow c.$  Then
$$
\|f_{n}-c_{f}e\|_{E}\geq\|f_{n}-c_{f_{n}}e\|_{E}
$$
and so
$$
\|f-c_{f}e\|_{E}\geq\|f-ce\|_{E}
$$
which gives $c=c_f.$ 
\end{proof}

\begin{lem}\label{strictlyestimate} Let   $E $  be a strictly convex  BFS satisfying  the  condition \eqref{min} and suppose that $E'$ has AC-norm.
 Let $J=(c,d)$  be a subinterval of  $I$,   and suppose that $u\in E'(J)$ and $v\in E(J)$. Then
\begin{equation}\label{3.5}
\A(J)\approx\min\limits_{x\in J}\|T_{x,J}|E(J)\rightarrow E(J)\|\approx\|T_{e,J}|E(J)\rightarrow E(J)\|,
\end{equation}
where $e\in I$ defined by \eqref{3.2}.
\end{lem}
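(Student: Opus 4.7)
My plan is to establish the two halves of the chain in \eqref{3.5} separately. The upper half is essentially free: equation \eqref{3.1} gives $\A(J) \le \inf_{x\in J}\|T_{x,J}\|$, while \eqref{3.4} shows that this infimum is comparable to $\|T_{e,J}\|$, and \eqref{3.3} combined with \eqref{3.2} gives $\|T_{e,J}\| \approx \|T_{e,(c,e)}\| = \|T_{e,(e,d)}\|$. So it remains to prove the reverse bound $\A(J) \gtrsim \|T_{e,(c,e)}\|$.

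The key identity is that for $f$ supported in $(c,e)$ one has
\[
 T_{c,J}f = T_{e,(c,e)}f + S_f\, v, \qquad S_f := \int_{c}^{e} f(t) u(t)\,dt,
\]
valid on all of $J$, since $T_{e,(c,e)}f$ vanishes on $(e,d)$ and $T_{c,J}f$ reduces to $S_f\,v$ there. The reparametrisation $\gamma := S_f-\alpha$ therefore yields
\[
 \inf_{\alpha\in\mathbb{R}} \|T_{c,J}f-\alpha v\|_{E(J)} = \inf_{\gamma\in\mathbb{R}}\|T_{e,(c,e)}f + \gamma v\|_{E(J)}.
\]
Splitting the $E(J)$-norm over the two pieces of $J$ and using that $T_{e,(c,e)}f$ is supported in $(c,e)$, I obtain both
\[
 \|T_{e,(c,e)}f + \gamma v\|_{E(J)} \ge |\gamma|\,\|v\|_{E((e,d))}
\]
and
\[
 \|T_{e,(c,e)}f + \gamma v\|_{E(J)} \ge \|T_{e,(c,e)}f\|_{E((c,e))} - |\gamma|\,\|v\|_{E((c,e))},
\]
and minimising the maximum of the two right-hand sides in $\gamma$ produces
\[
 \inf_\gamma \|T_{e,(c,e)}f + \gamma v\|_{E(J)} \ge \|T_{e,(c,e)}f\|_{E((c,e))}\cdot \frac{\|v\|_{E((e,d))}}{\|v\|_{E((c,e))}+\|v\|_{E((e,d))}}.
\]

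To finish, I split into two cases according to which of $\|v\|_{E((c,e))}$, $\|v\|_{E((e,d))}$ is larger. If $\|v\|_{E((c,e))}\le\|v\|_{E((e,d))}$ the ratio above is at least $1/2$, and taking the supremum over $f$ supported in $(c,e)$ with $\|f\|_{E(J)}=1$ gives $\A(J)\ge\frac12\|T_{e,(c,e)}\|$. In the opposite case $\|v\|_{E((e,d))}<\|v\|_{E((c,e))}$, I run the mirror-image argument: for $f$ supported in $(e,d)$ the analogous identity $T_{c,J}f = T_{e,(e,d)}f$ holds and a symmetric splitting yields $\A(J)\ge\frac12 \|T_{e,(e,d)}\|$. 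Since $\|T_{e,(c,e)}\|=\|T_{e,(e,d)}\|$ by the defining property \eqref{3.2} of $e$, in either case $\A(J)\gtrsim \widehat\A(J)\approx \|T_{e,J}\|$, which closes the chain in \eqref{3.5}. The only point requiring care is the admissibility of the one-sided test functions in the definition of $\A(J)$, but this causes no real obstacle since for $f$ supported in a single half-interval the relevant restricted norms coincide with their $E(J)$ counterparts.
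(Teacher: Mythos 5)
Your proof is correct, and it reaches the lower bound $\A(J)\gtrsim\|T_{e,J}\|$ by a genuinely different route from the paper. The paper's argument is built on strict convexity: it invokes Lemmas \ref{strictly} and \ref{strictlycont} to get a unique, continuously varying best-approximation coefficient $c_{f}$, picks a positive near-extremal $f_{1}$ supported in $(c,e)$ and a negative near-extremal $f_{2}$ supported in $(e,d)$, and uses an intermediate-value argument in $\lambda$ to produce $g=\lambda f_{1}+(1-\lambda)f_{2}$ with $c_{vFg}=0$, so that the infimum over $\alpha$ is attained at $\alpha=0$ and equals $\|T_{e,J}g\|_{E(J)}$. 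You instead exploit the exact identity $T_{c,J}f=T_{e,(c,e)}f+S_{f}v$ for one-sided test functions, split the $E(J)$-norm over the two halves of $J$ using only the lattice monotonicity axiom and the triangle inequality, and minimize the resulting maximum in $\gamma$ explicitly; the case analysis on which of $\|v\|_{E((c,e))}$, $\|v\|_{E((e,d))}$ dominates, combined with $\|T_{e,(c,e)}\|=\|T_{e,(e,d)}\|$ from \eqref{3.2}, then yields $\A(J)\geq\tfrac12\|T_{e,(c,e)}\|$ in either case. What your approach buys is that it dispenses with strict convexity of $E$ entirely (that hypothesis is then only needed elsewhere in the paper, e.g.\ for the duality map in the Bernstein/isomorphism-number estimates), it gives an explicit constant $\tfrac12$ up to the equivalence constant of \eqref{3.3}, and it avoids any appeal to sign or continuity considerations for the best approximant; what the paper's approach buys is a single test function $g$ realizing the bound, which is structurally closer to the construction reused in Lemma \ref{upper estimate}. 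Your closing remark about admissibility of one-sided test functions is the right point to flag, and it is indeed harmless since $\|f\|_{E(J)}=\|f\|_{E((c,e))}$ for $f$ supported in $(c,e)$ and $T_{e,(c,e)}$ annihilates functions supported in $(e,d)$, so the supremum over such $f$ recovers the full operator norm.
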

\begin{proof}
  Note that (using \eqref{3.3} and \eqref{3.4})
\begin{align}
\|T_{e,(c,e)}|E(J)\rightarrow E(J)\|&=\|T_{e,(e,d)}|E(J)\rightarrow E(J)\| \notag\\
&\leq\|T_{e,J}|E(J)\rightarrow E(J)\| \notag \\
&\leq C_{1} \|T_{e,(c,e)}|E(J)\rightarrow E(J)\|.\label{3.6}
\end{align}

Let $\alpha<\|T_{e,J}\|.$ Set $T_{e,J}=vF,$ where,
$$
Ff(x)=F_{e,J}f(x)=\chi_{J}(x)\int_{e}^{x}f(t)u(t)\chi_{J}(t)dt.
$$
By \eqref{3.6} it follows that  there exists $f_{i},\,i=1,2,$ supported in $(c,e)$ and $(e,d),$
respectively, such that $\|f_{i}\|_{E}=1,\,\,\|T_{e,J}f_{i}\|_{E(J)}>\alpha/C_{1}$ and $f_{1}$ positive, $f_{2}$ negative.  Note
that the same is true of the signs of the corresponding values of $c_{vFf_{1}},c_{vFf_{2}},$ with $e=v$ (see Lemma \ref{strictly}-\ref{strictlycont}). Hence by the continuity
established in Lemma \ref{strictlycont}, there is a $\lambda\in(0,1)$ such that $c_{vFg}=0$ for $g=\lambda f_{1}+(1-\lambda)f_{2}.$

We have
\begin{align*}
\|T_{e,J}g\|_{E(J)}&\geq C_{2}\max\{\|\lambda T_{e,(c,e)}f_{1}\|_{E((c,e))},\|(1-\lambda) T_{e,(e,d)}f_{2}\|_{E((e,d))}\}\\
&\hskip+1cm\geq C_{3}\alpha\|g\|_{E(J)}.
\end{align*}
We have
$$
\A(J)\geq \inf\limits_{\alpha\in\mathbb{R}}\|vFg-\alpha v\|_{E(J)}/\|g\|_{E(J)}=
\|vFg\|_{E(J)}/\|g\|_{E(J)}\geq C_{3}\alpha.
$$
Since $\alpha< \|T_{e,J}\|$ is arbitrary, $\A(J)\geq C_{3}\|T_{e,J}\|.$   and the first equivalence follows  from  \eqref{3.1}. 
Using \eqref{3.4}, we obtain the second equality of  \eqref{3.5}.
\end{proof}

\begin{lem}\label{approx1}
Let $J=(c,d)$  be a subinterval of  $I$, and suppose that $u_{1}, \, u_{2}$ belong to $E'(J)$ and  $v\in E(J)$. 
Then
$$
|\A(J,u_{1},v)-\A(J,u_{2},v)|\leq \|u_{1}-u_{2}\|_{E'(J)}\|u\|_{E(J)}.
$$
\end{lem}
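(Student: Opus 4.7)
The plan is to reduce the estimate to a bound on the difference of the two Hardy-type operators $T^{(1)}_{c,J}$ and $T^{(2)}_{c,J}$ associated with $u_1$ and $u_2$, where
$$T^{(i)}_{c,J}f(x) = v(x)\chi_J(x)\int_c^x f(t)\, u_i(t)\chi_J(t)\,dt, \qquad i=1,2.$$
For any $f \in E(J)$ the function $T^{(1)}_{c,J}f - T^{(2)}_{c,J}f$ equals $v(x)\chi_J(x)\int_c^x f(t)(u_1-u_2)(t)\chi_J(t)\,dt$, and by H\"older's inequality the inner integral is bounded in absolute value by $\|f\|_{E(J)}\|u_1-u_2\|_{E'(J)}$. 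Hence $\|T^{(1)}_{c,J}f - T^{(2)}_{c,J}f\|_{E(J)} \leq \|v\|_{E(J)}\|u_1-u_2\|_{E'(J)}\|f\|_{E(J)}$ (note: the statement is evidently meant with $\|v\|_{E(J)}$ in place of the mis-typed $\|u\|_{E(J)}$).

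Next I would insert this estimate into the definition of $\A$. By the triangle inequality, for every $\alpha \in \mathbb{R}$ and every nonzero $f \in E(J)$,
$$\|T^{(1)}_{c,J}f - \alpha v\|_{E(J)} \leq \|T^{(2)}_{c,J}f - \alpha v\|_{E(J)} + \|v\|_{E(J)}\|u_1-u_2\|_{E'(J)}\|f\|_{E(J)}.$$
Taking the infimum over $\alpha$ on both sides, dividing by $\|f\|_{E(J)}$, and then taking the supremum over $f \neq 0$ yields
$$\A(J,u_1,v) \leq \A(J,u_2,v) + \|u_1-u_2\|_{E'(J)}\|v\|_{E(J)}.$$

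Finally, the roles of $u_1$ and $u_2$ are symmetric in the argument, so the reverse inequality holds as well, and the claim follows. There is no genuine obstacle here; the only thing to watch is that the constant $\alpha$ is the same on both sides of the triangle inequality step, so that after taking $\inf_\alpha$ the estimate is preserved uniformly in $f$, and that $v \in E(J)$ together with $u_1-u_2 \in E'(J)$ makes the Hardy-type perturbation well-defined via H\"older.
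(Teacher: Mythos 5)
Your proposal is correct and follows essentially the same route as the paper: decompose $u_{1}=(u_{1}-u_{2})+u_{2}$, bound the perturbation term by H\"older's inequality as $\|u_{1}-u_{2}\|_{E'(J)}\|v\|_{E(J)}$, pass the estimate through $\inf_{\alpha}$ and $\sup_{f}$, and symmetrize in $u_{1},u_{2}$. You are also right that the factor $\|u\|_{E(J)}$ in the statement is a typo for $\|v\|_{E(J)}$.
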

\begin{proof}
\begin{align*}
\A(J,u_{1},v)&=\sup\limits_{\|f\|_{E(J)}=
1}\inf\limits_{\alpha\in\mathbb{R}}\left\|v(x)\left(\int_{a}^{x}f(t)(u_{1}(t)-u_{2}(t)+u_{2}(t))dt-\alpha\right)\right\|_{E(J)}\\
&\leq\sup\limits_{\|f\|_{E(J)}=1}\inf\limits_{\alpha\in\mathbb{R}}\left[\left\|v(x)\int_{a}^{x}f(t)(u_{1}(t)-u_{2}(t))dt\right\|_{E(J)}\right.\\
&\hskip+1cm\left.
+\left\|v(x)\int_{a}^{x}f(t)u_{2}(t)dt-\alpha v(x)\right\|_{E(J)}\right]\\
&\leq\sup\limits_{\|f\|_{E(J)}=1}\inf\limits_{\alpha\in\mathbb{R}}\left[\|u_{1}-u_{2}\|_{E'(J)}\|u\|_{E(J)}\right.\\
&\hskip+1cm\left. +
\left\|v(x)\int_{a}^{x}f(t)u_{2}(t)dt-\alpha v(x)\right\|_{E(J)}\right]\\
&\leq \|u_{1}-u_{2}\|_{E'(J)}\|u\|_{E(J)}+\A(J,u_{2},v).
\end{align*}
The same holds with $u_{1}$ and $u_{2}$ interchanged, and  the result follows.
\end{proof}

\begin{lem}\label{approx2}
Let $J=(c,d)$  be a subinterval of  $I$, and suppose that   $u\in E'(I)$  and $v_{1}, \, v_2\in E(I)$. 
Then
$$
|\A(J,u,v_{1})-\A(J,u,v_{2})|\leq 3\|v_{1}-v_{2}\|_{E(J)}\|u\|_{E'(J)}.
$$
\end{lem}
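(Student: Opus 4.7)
The plan is to proceed by the same template as Lemma \ref{approx1}, but with one extra input: the uniform bound on the relevant $\alpha$ provided by Lemma \ref{normequal}, which is crucial here because the perturbation affects not just the operator $T$ but also the subtracted multiple $\alpha v$. By Lemma \ref{normequal}, for both weights we may restrict to $|\alpha|\le 2\|u\|_{E'(J)}$, and this bound depends only on $u$, so it is the same set of $\alpha$'s for $v_1$ and for $v_2$.

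First I would write, for $\|f\|_{E(J)}=1$ and such an $\alpha$, the decomposition
\begin{equation*}
T_{c,J,u,v_1}f(x)-\alpha v_1(x)=\bigl(T_{c,J,u,v_2}f(x)-\alpha v_2(x)\bigr)+(v_1(x)-v_2(x))\,H_{f,\alpha}(x),
\end{equation*}
where $H_{f,\alpha}(x)=\chi_J(x)\int_c^x f(t)u(t)\chi_J(t)\,dt-\alpha$. Hölder's inequality on $J$ gives
\begin{equation*}
\left|\chi_J(x)\int_c^x f(t)u(t)\chi_J(t)\,dt\right|\le\|f\|_{E(J)}\|u\chi_J\|_{E'(J)}=\|u\|_{E'(J)},
\end{equation*}
and combining with $|\alpha|\le 2\|u\|_{E'(J)}$ yields $\|H_{f,\alpha}\|_{L^\infty(J)}\le 3\|u\|_{E'(J)}$. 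Since $E$ is a Banach function space, the ideal property gives $\|(v_1-v_2)H_{f,\alpha}\|_{E(J)}\le 3\|u\|_{E'(J)}\|v_1-v_2\|_{E(J)}$.

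Applying the triangle inequality in $E(J)$, then taking $\inf_{|\alpha|\le 2\|u\|_{E'(J)}}$ and $\sup_{\|f\|_{E(J)}=1}$, and invoking Lemma \ref{normequal} on both sides (this is where the uniformity in $\alpha$ matters: the restricted infimum on the right still equals $\A(J,u,v_2)$), one obtains
\begin{equation*}
\A(J,u,v_1)\le \A(J,u,v_2)+3\|u\|_{E'(J)}\|v_1-v_2\|_{E(J)}.
\end{equation*}
Swapping the roles of $v_1$ and $v_2$ gives the reverse inequality, and the lemma follows. The only step that is not completely routine is keeping the $\alpha$-range fixed throughout, so that the same $\alpha$ can be used for both operators; this is handled by the observation above that the bound $2\|u\|_{E'(J)}$ supplied by Lemma \ref{normequal} is independent of the weight $v$.
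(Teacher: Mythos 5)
Your proposal is correct and follows essentially the same route as the paper: the paper likewise splits $T_{J,u,v_1}f-\alpha v_1$ into $(T_{J,u,v_2}f-\alpha v_2)$ plus the term $(v_1-v_2)\bigl(\int f u\,dt-\alpha\bigr)$, bounds the latter by $3\|v_1-v_2\|_{E(J)}\|u\|_{E'(J)}$ using H\"older and $|\alpha|\le 2\|u\|_{E'(J)}$, and invokes Lemma \ref{normequal} to restrict the infimum to that $v$-independent range of $\alpha$. Your explicit remark that the $\alpha$-range supplied by Lemma \ref{normequal} depends only on $u$ is exactly the point the paper's computation relies on.
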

\begin{proof}
Let
\begin{align*}
T_{J}^{1}f(x)&=v_{1}(x)\chi_{J}(x)\int_{a}^{x}f(t)u(t)dt,\\
T_{J}^{2}f(x)&=v_{2}(x)\chi_{J}(x)\int_{a}^{x}f(t)u(t)dt,\\
T_{I}^{3}f(x)&=(v_{1}(x)-v_{2}(x))\chi_{J}(x)\int_{a}^{x}f(t)u(t)dt
\end{align*}
Suppose that $\A(J,u,v_{1})>\A(J,u,v_{2}).$ By Lemma \ref{normequal} we have
\begin{align*}
\A(J,u,v_{1})&-\A(J,u,v_{2})\\
&=\sup\limits_{\|f\|_{E(J)}=1}\inf\limits_{\alpha\in\mathbb{R}}\|T_{J}^{1}f-\alpha v_{1}\|_{E(J)}-\A(J,u,v_{2})\\
&=
\sup\limits_{\|f\|_{E(J)}=1}\inf\limits_{|\alpha|2\leq\|u\|_{E(J)}}\|T_{J}^{1}f-\alpha v_{1}\|_{E(J)} -\A(J,u,v_{2})\\
&\leq\sup\limits_{\|f\|_{E(J)}=1}\inf\limits_{|\alpha|\leq2\|u\|_{E(J)}}\left(\|T_{J}^{3}f-\alpha(v_{1}-v_{2})\|_{E(J)}
+\|T_{J}^{2}f-\alpha v_{2}\|_{E(J)}\right)\\
&\hskip+1cm -\A(J,u,v_{2})\\
&\leq\sup\limits_{\|f\|_{E(J)}}\inf\limits_{|\alpha|\leq\|u\|_{E(J)}}
\left(3\|v_{1}-v_{2}\|_{E(J)}\|u\|_{E'(J)}+\|T_{J}^{2}-\alpha v_{2}\|_{E(J)}\right)\\
&\hskip+1cm -\A(J,u,v_{2})\\
&\le 3\|v_{1}-v_{2}\|_{E(J)}\|u\|_{E'(J)}+\A(J,u,v_{2})-\A(J,u,v_{2})\\
&=3\|v_{1}-v_{2}\|_{E(J)}\|u\|_{E'(J)}.
\end{align*}
The proof is complete.
\end{proof}

Note that in Lemma \ref{approx1}-\ref{approx2} we can replace $\A(J)$ by $\|T_{a,J}\|.$

\begin{lem}\label{localestimate}
Let $E\in\mathcal{M}$ be a strictly convex BFS and suppose that $E'$ has AC-  norm. Let $u$ and $v$ be constant over an interval $J=(c,d).$
 Then $\A(J,u,v)\approx uv|J|.$
\end{lem}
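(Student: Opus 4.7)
The plan is to reduce $\A(J,u,v)$ to estimating the norm of a Hardy-type operator on $E(J)$ and then to extract matching upper and lower bounds of order $uv|J|$ via the Muckenhoupt condition. Throughout set $L = |J|$ and observe that since $u, v$ are constants, $T_{c,J}f(x) = uv\chi_J(x)\int_c^x f(t)\chi_J(t)\,dt$.

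For the upper bound, I would apply H\"older's inequality followed by the Muckenhoupt condition to get
\[
\|T_{c,J}f\|_{E(J)} \leq uv\|\chi_J\|_{E'}\|f\|_{E(J)}\|\chi_J\|_E \leq CuvL\|f\|_{E(J)}.
\]
Taking $\alpha = 0$ in the definition of $\A(J,u,v)$ then yields $\A(J,u,v) \leq \|T_{c,J}\| \leq CuvL$.

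For the lower bound I would test with the single function $f_0 = \chi_J/\|\chi_J\|_E$, which satisfies $\|f_0\|_{E(J)} = 1$ and $T_{c,J}f_0(x) = uv(x-c)\chi_J(x)/\|\chi_J\|_E$. After factoring the constants and substituting $\gamma = \alpha/u$, it suffices to show
\[
\inf_{\gamma\in\mathbb{R}}\|g - \gamma\chi_J\|_{E(J)} \gtrsim L,
\]
where $g(x) = (x-c)\chi_J(x)/\|\chi_J\|_E$. Given $\gamma$, I would set $y = c+\gamma\|\chi_J\|_E$ and $S = \{x\in J : |x-y| \geq L/4\}$. An elementary case analysis yields $|S| \geq L/2$ uniformly in $\gamma$, and on $S$ one has $|g(x)-\gamma| = |x-y|/\|\chi_J\|_E \geq L/(4\|\chi_J\|_E)$. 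The Muckenhoupt condition then supplies the crucial lower bound
\[
\|\chi_S\|_E \geq \frac{|S|}{\|\chi_S\|_{E'}} \geq \frac{L/2}{\|\chi_J\|_{E'}} \gtrsim \|\chi_J\|_E,
\]
which combined with the pointwise estimate delivers $\|g-\gamma\chi_J\|_{E(J)} \gtrsim L$ and hence $\A(J,u,v) \gtrsim uvL$, as required.

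The main obstacle is checking the measure estimate $|S| \geq L/2$ uniformly in $\gamma$, particularly when $y$ lies outside $J$ (that is, when $\gamma \notin [0, L/\|\chi_J\|_E]$). In that regime $g - \gamma$ has constant sign on $J$, but $|g-\gamma|$ remains bounded below on the sub-interval of $J$ furthest from $y$, which has length at least $3L/4$, so the estimate still goes through. I note in passing that the strict convexity and AC-norm hypotheses in the statement of the lemma are not actually needed for this particular argument; membership of $E$ in $\mathcal{M}$ suffices.
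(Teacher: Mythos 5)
Your proof is correct, but it follows a genuinely different and more elementary route than the paper. The paper does not use a test function at all: it invokes the machinery of Section 4, namely Lemma \ref{strictlyestimate} ($\A(J)\approx\|T_{e,J}\|$ for the equalizing point $e$) together with Theorem \ref{t1.2} ($\|T_J\|\approx A_J$), and then shows via the Muckenhoupt condition that $A_{(c,e)}$ and $A_{(e,d)}$ are both $\approx|J|$, concluding $\A(J,1,1)\approx|J|$ and finishing by the homogeneity $\A(J,u,v)=uv\,\A(J,1,1)$. That is precisely where the strict convexity of $E$ and the AC-norm of $E'$ enter (they are hypotheses of Lemma \ref{strictlyestimate}), which explains their presence in the statement. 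Your argument replaces all of this with the explicit test function $f_0=\chi_J/\|\chi_J\|_E$: the key point, that any constant must miss the linear function $g(x)=(x-c)/\|\chi_J\|_E$ by at least $L/(4\|\chi_J\|_E)$ on a set $S\subset J$ with $|S|\ge L/2$, is sound (indeed the case analysis you worry about is not needed, since $\{x:|x-y|<L/4\}$ has measure at most $L/2$ wherever $y$ sits), and the step $\|\chi_S\|_E\ge|S|/\|\chi_S\|_{E'}\gtrsim\|\chi_J\|_E$ is a correct use of H\"older plus the Muckenhoupt condition and the lattice property. What your approach buys is a self-contained proof with explicit constants depending only on the Muckenhoupt constant, valid without strict convexity, without the AC-norm assumption, and in fact without condition \eqref{min}; what the paper's approach buys is uniformity with the surrounding lemmas, since $\A(J)\approx\|T_{e,J}\|$ is needed elsewhere anyway.
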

\begin{proof}
 From the Muckenhaupt condition we deduce that if $\widetilde{J}\subset J$  and $|\widetilde{J}|/|J|\geq1/2,$ then $\|\chi_{\widetilde{J}}\|_{E}\approx\|\chi_{I}\|_{E}$ and
 $\|\chi_{\widetilde{J}}\|_{E'}\approx\|\chi_{I}\|_{E'}.$
 Let $e\in(c,d),$ we have
$$
\max\left\{\sup\limits_{t\in(c,e)}\|\chi_{(c,t)}\|_{E'}\|\chi_{(t,e)}\|_{E},
\sup\limits_{t\in(e,d)}\|\chi_{(e,t)}\|_{E'}\|\chi_{(t,d)}\|_{E}\right\}\approx|J|.
$$
Using Theorem \ref{boundinterval} and Lemma \ref{strictlyestimate} we obtain
$$
\A(I,1,1)\approx|J|.
$$
Consequently
\begin{align*}
\A(J,u,v)&=\sup\limits_{\|f\|_{E(J)}=1}\inf\limits_{\alpha\in\mathbb{R}}\left\|v\left(\int_{a}^{x}f(t)u)dt-\alpha\right)\right\|_{E(J)}\\
&=uv\sup\limits_{\|f\|_{E(J)}=1}\inf\limits_{c\in\mathbb{R}}\left\|\left(\int_{a}^{x}f(t)dt-c\right)\right\|_{E(J)}\\
&=uv\A(J,1,1)\approx uv|J|.
\end{align*}
\end{proof}

\section{Estimates of $s$-numbers for $T$} \label{s5}

Throughout this section we view $T$  as a map from a BFS $E$ to itself.

\begin{lem}\label{lower estimate}
Let $E $  be a strictly convex  BFS  space that fulfills condition \eqref{min}, let $E'$ have AC-norm,  and suppose that  $u\in E'(I)$  and $v\in E(I)$ .
Let $a=a_{0}<a_{1}<...<a_{N}=b$ be a sequence such that $\mathcal{A}(a_{i-1},a_{i})\leq\varepsilon$ for $i=2,...,N$
and $\|T_{a,(a,a_{1})}\|\leq\varepsilon.$ Then
$$
a_{N}(T)\leq C\varepsilon.
$$
\end{lem}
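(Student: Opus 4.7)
My plan is to construct a bounded linear operator $P : E \to E$ of rank at most $N-1$ with $\|T - P\|_{E \to E} \leq C\varepsilon$, which immediately yields $a_N(T) \leq \|T-P\| \leq C\varepsilon$ by the definition of the approximation number. The key observation is that on each subinterval $J_i = (a_{i-1}, a_i)$ with $i \geq 2$, the hypothesis $\A(J_i) \leq \varepsilon$ combined with Lemma \ref{strictlyestimate} furnishes a point $e_i \in J_i$ such that
$$\|T_{e_i, J_i}\,|\,E(J_i) \to E(J_i)\| \leq C\,\A(J_i) \leq C\varepsilon.$$
For $x \in J_i$, splitting the integral defining $Tf(x)$ at $e_i$ gives
$$Tf(x) \;=\; v(x)\int_a^{e_i} f(t)u(t)\,dt + T_{e_i, J_i}f(x) \;=\; \gamma_i(f)\, v(x) + T_{e_i, J_i}f(x),$$
where $\gamma_i(f) := \int_a^{e_i} f(t)u(t)\,dt$ is a bounded linear functional on $E$ by H\"older's inequality and assumption \eqref{1.1}.

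Setting
$$Pf(x) \;:=\; \sum_{i=2}^{N} \gamma_i(f)\, v(x)\chi_{J_i}(x),$$
one sees that $P$ is bounded linear of rank at most $N-1$. By the decomposition above, $(Tf - Pf)\chi_{J_1} = T_{a, J_1}f$ on $J_1$ and $(Tf - Pf)\chi_{J_i} = T_{e_i, J_i}f$ on each $J_i$ for $i \geq 2$, so the two hypotheses of the lemma together give
$$\|(Tf - Pf)\chi_{J_i}\|_E \leq C\varepsilon\,\|f\chi_{J_i}\|_E \qquad \text{for every } i = 1,\dots,N.$$
Applying inequality \eqref{min} (available because $E \in \M$) both to $g := Tf - Pf$ and to $f$ over the partition $\mathcal{Q} = \{J_i\}_{i=1}^N$ then yields
$$\|Tf - Pf\|_E \leq C \left\|\sum_i \frac{\|(Tf-Pf)\chi_{J_i}\|_E}{\|\chi_{J_i}\|_E}\,\chi_{J_i}\right\|_E \leq C\varepsilon \left\|\sum_i \frac{\|f\chi_{J_i}\|_E}{\|\chi_{J_i}\|_E}\,\chi_{J_i}\right\|_E \leq C\varepsilon \|f\|_E,$$
completing the proof.

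The essential step is the invocation of Lemma \ref{strictlyestimate}: it converts the nonlinear infimum over $\alpha$ that appears in the definition of $\A(J_i)$ into a genuine operator-norm bound on $T_{e_i, J_i}$, which is what allows the coefficient $\gamma_i$ in the rank-one correction on $J_i$ to be an \emph{honest} bounded linear functional of $f$ rather than a best-approximation map (which, in a general strictly convex Banach space, would only be continuous, not linear). The main point I expect to require care is ensuring that the assembly of local estimates into a single global bound carries constants depending only on $E$ and not on $N$ or on the particular partition; this is exactly what the $\M$-condition guarantees through the two-sided inequality \eqref{min}.
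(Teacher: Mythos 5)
Your proposal is correct and follows essentially the same route as the paper: the identical rank-$(N-1)$ correction $Pf=\sum_{i=2}^{N}\bigl(\int_a^{e_i}fu\,dt\bigr)v\chi_{J_i}$ with $e_i$ supplied by Lemma \ref{strictlyestimate}, the same decomposition $(T-P)f=\chi_{J_1}T_{a,J_1}f+\sum_{i\ge2}\chi_{J_i}T_{e_i,J_i}f$, and the same reassembly of the local bounds into a global one --- the paper phrases this last step via the uniform upper/lower $l$-estimates while you use their consequence \eqref{min}, which is an equivalent formulation. The only cosmetic slip is attributing \eqref{min} to $E\in\M$; the lemma assumes \eqref{min} directly and does not need the Muckenhoupt part of $\M$.
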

\begin{proof}
 Set $I_{i}=(a_{i-1},a_{i})$ and $Pf=\sum_{i=2}^{N}P_{i}f,$
where
$$
P_{i}f(x)=v(x)\chi_{I_{i}}\int_{a}^{e_{i}}f(t)u(t)dt,
$$
and $e_{i}$ is a number obtained from Lemma \ref{strictlyestimate} for which
$$
\mathcal{A}(I_{i})=\min\limits_{x\in I_{i}}\|T_{x,I_{i}}|E(I_{i})\rightarrow E(I_{i})\|\approx\|T_{e_{i},I_{i}}|E(I_{i})\rightarrow E(I_{i})\|.
$$

Note that  $\mbox{rank} P\leq N-1$; using Lemma \ref{strictlyestimate} we obtain
\begin{align*}
\|(T-P)f\|_{E}&=\|\chi_{I_{1}}T_{a,I_{1}}f+\sum_{i=2}^{N}(Tf-P_{i}f)\chi_{I_{i}}\|_{E}\\
&=\|\chi_{I_{1}}T_{a,I_{1}}f+\sum_{i=2}^{N}\chi_{I_{i}}T_{e_{i},I_{i}}f\|_{E}\\
&\leq C\|\{\|\chi_{I_{1}}T_{a,I_{1}}f\|_{E},\|\chi_{I_{i}}T_{e_{i},I_{i}}f\|_{E}\}\|_{l}\\
&\leq C \max\{\|T_{a,I_{1}}\|,\,\A(I_{2}),...,\A(I_{N})\}\|\{\|f\chi_{I_{i}}\|_{E}\}\|_{l} \\
&\leq C_{1} \varepsilon\|f\|_{E}.
\end{align*}
\end{proof}

\begin{lem}\label{upper estimate}
Let $E $  be a strictly convex BFS  satisfying condition \eqref{min} . Let    $E^{\ast}$ be strictly convex  and suppose that $E'$ has AC-norm.  Let $u\in E'(I)$ and $v\in E(I)$. 
Let $a=a_{0}<a_{1}<...<a_{N}=b$ be a sequence such that $\mathcal{A}(a_{i-1},a_{i})\geq\varepsilon$ for $i=2,...,N$
and $\|T_{a,(a,a_{1})}\|\geq\varepsilon.$ Then
$$
i_{N}(T)\geq C\varepsilon.
$$
\end{lem}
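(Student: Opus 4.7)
The plan is to exhibit an $N$-dimensional Banach space $G$ together with bounded linear maps $B\colon G\to E$ and $A\colon E\to G$ satisfying $ATB=\mathrm{id}_G$ and $\|A\|\,\|B\|\le C/\varepsilon$; the definition of the isomorphism number then yields $i_N(T)\ge(\|A\|\,\|B\|)^{-1}\gtrsim\varepsilon$.

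\textbf{Witnessing functions and functionals.} For each $i\ge 2$, the hypothesis $\mathcal{A}(I_i)\ge\varepsilon$ on $I_i:=(a_{i-1},a_i)$ supplies (up to arbitrarily small loss) an $f_i\in E$ supported in $I_i$ with $\|f_i\|_E=1$ and $\inf_\alpha\|Tf_i-\alpha v\|_{E(I_i)}\ge c\varepsilon$. Strict convexity of $E$ (Lemma~\ref{strictly}) provides a unique optimizer $\alpha_i$; set $g_i:=(Tf_i-\alpha_i v)\chi_{I_i}$, whose distance in $E(I_i)$ from $\mathrm{span}(v\chi_{I_i})$ equals $\|g_i\|_{E(I_i)}\ge c\varepsilon$ by uniqueness. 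Hahn--Banach then produces $\psi_i\in E(I_i)^{\ast}$ with $\psi_i(g_i)=\|g_i\|_{E(I_i)}$, $\psi_i(v\chi_{I_i})=0$, and $\|\psi_i\|=1$. For $i=1$, use $\|T_{a,I_1}\|\ge\varepsilon$ to pick $f_1$ supported on $I_1$ with $\|Tf_1\|_{E(I_1)}\ge c\varepsilon$ and take $\psi_1\in E(I_1)^{\ast}$ norming $Tf_1$; here no $v$-orthogonality is needed since no $Tf_j$ with $j<1$ exists.

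\textbf{Construction and verification.} Let $G:=l_{\mathcal{Q}}$ for the partition $\mathcal{Q}=\{I_i\}_{i=1}^N$, an $N$-dimensional Banach space with basis $\{e_i\}$. Define $B(\beta):=\sum_i\beta_i f_i$; because the $f_i$ have disjoint supports and unit $E$-norm, the upper $l$-estimate yields $\|B\beta\|_E\le C\|\beta\|_{l_{\mathcal{Q}}}$. Define $A(h):=\sum_i\|g_i\|_{E(I_i)}^{-1}\,\psi_i(h\chi_{I_i})\,e_i$. The causal structure of $T$ makes $Tf_j\chi_{I_i}=0$ for $j>i$ and $Tf_j\chi_{I_i}=\bigl(\int_{I_j}f_j u\bigr)v\chi_{I_i}$ for $j<i$; in both cases $\psi_i(Tf_j\chi_{I_i})=0$ because $\psi_i$ annihilates $v\chi_{I_i}$. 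The diagonal term $\psi_i(Tf_i\chi_{I_i})=\psi_i(g_i+\alpha_i v\chi_{I_i})=\|g_i\|_{E(I_i)}$ then forces $A(Tf_j)=e_j$, so $ATB=\mathrm{id}_G$. For $\|A\|$, the bounds $|\psi_i(h\chi_{I_i})|\le\|h\chi_{I_i}\|_E$ and $\|g_i\|_{E(I_i)}\ge c\varepsilon$ combine with the lower $l$-estimate to give $\|A(h)\|_{l_{\mathcal{Q}}}\le(C/\varepsilon)\|h\|_E$. Multiplying the two norm bounds gives the desired $\|A\|\,\|B\|\le C'/\varepsilon$.

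\textbf{Main obstacle.} The cancellation of the tail contribution of $Tf_j$ on the later interval $I_i$ (for $j<i$) is the heart of the argument: without functionals $\psi_i$ that annihilate $v\chi_{I_i}$ while norming the ``proper'' part $g_i$, the matrix $[\psi_i(Tf_j\chi_{I_i})]$ would fail to be triangular with nonzero diagonal, and $ATB$ would not reduce to the identity. The existence of such $\psi_i$ with $\|\psi_i\|=1$ hinges on the minimizer $\alpha_i$ being unique and saturating the distance from $g_i$ to $\mathrm{span}(v\chi_{I_i})$, which is precisely what strict convexity of $E$ delivers via Lemma~\ref{strictly}. Strict convexity of $E^{\ast}$, though hypothesized, does not enter the skeleton above and presumably serves finer selection/continuity purposes in the formal write-up.
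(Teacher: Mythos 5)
Your proof is correct, but it takes a genuinely different route from the paper's. The paper first invokes Lemma \ref{strictlyestimate} to replace $\mathcal{A}(I_i)$ by the operator norm $\|T_{e_i,I_i}\|$ at the equilibrium point $e_i$ of \eqref{3.2}, picks norm-attaining functions $f_i^1,f_i^2$ of opposite sign on $(a_{i-1},e_i)$ and $(e_i,a_i)$, and assembles test functions $g_i$ on the \emph{shifted} intervals $J_i=(e_{i-1},e_i)$ with the coefficients $c_i,d_i$ tuned by a continuity/sign argument so that $\int_{J_i}g_iu\,dt=0$; this makes the images $Tg_i$ disjointly supported, after which the functionals defining $B$ are taken to be the G\^ateaux gradients $\widetilde{J}_{E}(Tg_i)$ --- the step where strict convexity of $E^{\ast}$ and the compactness (norm attainment) of the local operators are actually used. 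You instead keep the original intervals $I_i$, read $\mathcal{A}(I_i)$ directly as a distance from $Tf_i\chi_{I_i}$ to $\mathrm{span}(v\chi_{I_i})$, and kill the tail contributions $Tf_j\chi_{I_i}=\bigl(\int_{I_j}f_ju\,dt\bigr)v\chi_{I_i}$ for $j<i$ on the dual side, via Hahn--Banach functionals $\psi_i$ that vanish on $v\chi_{I_i}$ and norm the distance; causality handles $j>i$. This is cleaner and needs strictly less: a norm-one functional annihilating $\mathrm{span}(v\chi_{I_i})$ with $\psi_i(Tf_i\chi_{I_i})=\mathrm{dist}\bigl(Tf_i\chi_{I_i},\mathrm{span}(v\chi_{I_i})\bigr)\ge\varepsilon/2$ exists by Hahn--Banach alone, so neither strict convexity of $E$ nor of $E^{\ast}$ nor attainment of any suprema is required (your attribution of the existence of $\psi_i$ to the uniqueness in Lemma \ref{strictly} overstates what is needed --- existence of a minimizing $\alpha_i$ follows from coercivity and continuity, and even that is not essential). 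Both arguments end identically: $G=l_{\mathcal{Q}}$ of dimension $N$, $ATB=\mathrm{id}_G$, and \eqref{min} gives $\|B\|\leq C$ and $\|A\|\leq C/\varepsilon$, whence $i_N(T)\ge\|A\|^{-1}\|B\|^{-1}\ge C\varepsilon$. What the paper's heavier construction buys is a subspace $\mathrm{span}\{g_i\}$ on which $\|Tg\|_E\ge C\varepsilon\|g\|_E$ holds outright (directly usable for Bernstein numbers), but since by Theorem \ref{t1.4} the isomorphism numbers are the smallest of the $s$-numbers considered, your bound yields the same conclusions.
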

\begin{proof}
The argument here is similar to the proof of Lemma 6.13 of \cite{LE}
 (which dealt with the case when $ E$ is a Lebesgue space),  but we give full details for the convenience of the reader.
 Set $I_{i}=(a_{i-1},a_{i})\,\,(i=1,...,N).$ From Lemma \ref{strictlyestimate}  it follows that there is
 $e_{i}\in I_{i}$ such that
$$
\mathcal{A}(I_{i})=\min\limits_{x\in I_{i}}\|T_{x,I_{i}}|E(I_{i})\rightarrow E(I_{i})\|\approx\|T_{e_{i},I_{i}}|E(I_{i})\rightarrow E(I_{i})\|.
$$
Note also that (see Lemma \ref{equivalencenorms})
\begin{align*}
\|T_{e_{i},(a_{i-1},e_{i})}|E((a_{i-1},e_{i}))\rightarrow E((a_{i-1},e_{i}))\|&=\|T_{e_{i},(e_{i},a_{i})}|E((e_{i},a_{i}))\rightarrow E((e_{i},a_{i}))\|\\
&\approx\|T_{e_{i},I_{i}}|E(I_{i})\rightarrow E(I_{i})\|.
\end{align*}

Since $T_{e_{i},(a_{i-1},e_{i})}$ and $T_{e_{i},(e_{i},a_{i})}$ are compact operators there exist functions $f^{1}_{i},\,f^{2}_{i}$ such that 
\begin{align*}
\mbox{supp}\,f^{1}_{i}\subset (a_{i-1},e_{i}),\quad \mbox{supp}\,f^{2}_{i}\subset (e_{i},a_{i}),\quad&  \|f^{1}_{i}\|_{E}=\|f^{2}_{i}\|_{E}=1,\\
\|T_{e_{i},(a_{i-1},e_{i})}|E((a_{i-1},e_{i}))\rightarrow E((a_{i-1},e_{i}))\|&=\|T_{e_{i},(a_{i-1},e_{i})}f_{i}^{1}\|_{E(e_{i},(a_{i-1},e_{i}))}\\
\intertext{and}
|T_{e_{i},(e_{i},a_{i})}|E((e_{i},a_{i}))\rightarrow E((e_{i},a_{i}))\|&=|T_{e_{i},(e_{i},a_{i})}f_{i}^{2}\|_{E((e_{i},a_{i}))}.
\end{align*}

 Define $J_{1}=(a_{0},e_{1})=(e_{0},e_{1}),$  $J_{i}=(e_{i-1},e_{i})$ for $i=2,...,N$ and $J_{N+1}=(e_{N-1},b).$
We introduce functions 
\begin{align*}
g_{1}(x)&=f^{1}_{1}(x)\chi_{(e_{0},e_{1})}(x),\\
 g_{i}(x)&=(c_{i}f^{2}_{i-1}(x)\chi_{(e_{i-1},a_{i-1})}(x)+d_{i}f^{1}_{i}(x)\chi_{(a_{i-1},e_{i})}(x))\quad \text{ for } \quad i=2,...,N\\
\intertext{ and }
g_{N+1}(x)&=f^{2}_{N}(x)\chi_{J_{N}}(x).
\end{align*}
For these functions we have
\begin{align*}
\frac{\|T_{e_{i-1},J_{i}}g_{i}\|_{E((e_{i-1},a_{j-1}))}}{\|g_{i}\|_{E((e_{i-1},a_{j-1}))}}&\geq C\varepsilon\\
 \intertext{and}
\frac{\|T_{e_{i},J_{i}}g_{i}\|_{E((a_{i-1},e_{j}))}}{\|g_{i}\|_{E((a_{i-1},e_{j}))}}&\geq C\varepsilon \quad 
\text{for}\quad  i=1,....,N+1.
\end{align*}

We can see that $T_{e_{i-1},J_{i}}g_{i}$ and $T_{e_{i},J_{i}}g_{i}$ do not change sign on $(e_{i-1},a_{i-1})$ and $(a_{i-1},e_{i})$ respectively. Since $T_{e_{i-1},J_{i}}g_{i}(x)$ and $T_{e_{i},J_{i}}g_{i}(x)$ are continuous function we can choose constants $c_{i}$ and $d_{i}$ such that
$$
T_{e_{i-1},J_{i}}g_{i}(a_{i-1})=T_{e_{i},J_{i}}g_{i}(a_{i-1})>0
$$
and $\|g_{j}\|_{E(J_{i})}=1.$ Then we can see that $\mbox{supp}(Tg_{i})\subset J_{i},\,i=2,...,N.$

Note that
\begin{align}
\frac{\|Tg_{i}\|_{E(J_{i})}}{\|g\|_{E(J_{i})}}&=
\frac{\|T_{e_{i-1},(e_{i-1},a_{i-a})}g_{i}\chi_{(e_{i-1},a_{i-a})}+
T_{e_{i},(a_{i-1},e_{i})}g_{i}\chi_{(a_{i-1},e_{i})}\|_{E(J_{i})}}{\|g\|_{E(J_{i})}}  \notag \\
&\approx\frac{\|\{\|T_{e_{i-1},(e_{i-1},a_{i-a})}g_{i}\|_{E((e_{i-1},a_{i-a}))},
\|T_{e_{i},(a_{i-1},e_{i})}g_{i}\|_{E((a_{i-1},e_{i})}\}\|_{l}}{\|g\|_{E(J_{i})}}\notag\\
&\geq C_{1}\varepsilon\quad  \mbox{for}\quad  i=2,...,N.\label{8}
\end{align}

Since $E$ and $E^{\ast}$ are strictly convex BFS,
given any $x\in E\backslash \{0\},$ there is a unique element of $E^{\ast},$ here written as $\widetilde{J}_{E}(x),$
such that $\|\widetilde{J}_{X}(x)\|_{X^{\ast}}=1$  and $<x,\widetilde{J}_{E}(x)>\,=\|x\|_{E}.$ Note that for all
 $x\in E\backslash \{0\},$  $\widetilde{J}_{E}(x)=\mbox{grad}\|x\|_{E},$ where $\mbox{grad}\|x\|_{E}$ denotes the G\^{a}teaux
 derivative of $\|\cdot\|_{E}$ at $x$ (see \cite{LE}).

Denote by $l$ the discrete Banach function space corresponding to the partition $J_{i},\,i=1,...,N+1$ of the interval $I.$
The maps $A:l\rightarrow E$ and $B: E\rightarrow l$ are defined by:
\begin{align*}
A(\{d'_{i}\}_{i=1}^{N})&=\sum_{i=1}^{N+1}d'_{i}g_{i}(x)\\
Bg(x)&=\left\{\frac{<g\chi_{J_{i}},\widetilde{J}_{E}(Tg_{i})>}{\|Tg_{i}\|_{E(J_{i})}}\right\}_{i=1}^{N+1}.
\end{align*}
Since $<Tg_{i},\widetilde{J}_{E}(Tg_{i})>\,=\|Tg_{i}\|_{E},$
$$
BTA(\{d_{i}\}_{i=1}^{N+1})=\{d_{i}\}_{1}^{N+1}.
$$
Observe that $\|B:E\rightarrow l\|$ is attained only for functions of the form
$$
g(x)=\sum_{i=1}^{N+1}c_{i}'Tg_{i}(x),
$$
Using \eqref{8} we obtain
$$
\|g\|_{E}\geq C_{2}\varepsilon\|\{c_{i}'\}_{i=1}^{N+1}\|_{l}
$$
and then
$$
\sup\limits_{\|f\|_{E}\leq1}\|Bf\|_{l}=\sup\limits_{\|g\|_{E}\leq1}\|B(\sum_{i=1}^{N+1}c_{n}'Tg_{i}(x))\|_{l}=
\sup\limits_{\|g\|_{E}\leq1}\|\{c_{i}'\}_{i=1}^{N+1}\|_{l}\leq C_{2}/\varepsilon.
$$
From
$$
\|A(\{d_{i}'\})_{i=1}^{N+1}\|_{E}\approx\|\{\|d_{i}'g_{i}\|_{E(J_{i})}\}\|_{l}=\|\{d_{i}'\}\|_{l}
$$
it follows that $\|A:l\rightarrow E\|\approx 1.$ Thus
$$
i_{N}(T)\geq\|A\|^{-1}\|B\|^{-1}\geq C_{3}\varepsilon.
$$
\end{proof}

Note that in the formulation of Lemmas \ref{lower estimate} and \ref{upper estimate}  instead $\A$ we may use $\widehat{\A}.$

Let  $E $ be a BFS satisfying condition \eqref{min}, let $E'$ have AC-norm, and suppose that $u\in E'(I) $ and $v\in E(I).$
Note that for sufficiently small $\varepsilon>0$ there are $c,d\in (a,b)$ for
which $\widehat{\mathcal{A}}(c,b)=\varepsilon$ and $\|T_{a,(a,d)}\|=\varepsilon.$ Indeed, since $T$ is compact, there
exists a positive integer $N(\varepsilon)$ and points $a=a_{0}<a_{1}<....<a_{N(\varepsilon)}=b$ with $\widehat{\A}(a_{i-1},a_{i})=\varepsilon$ for $i=2,...,N(\varepsilon)-1,\,\,\widehat{\A}(a_{N(\varepsilon)-1},b)\leq\varepsilon$ and
$\|T_{a,(a,a_{1})}\|=\varepsilon.$ The intervals $I_{i}=(a_{i)-1},a_{i}),\,i=1,...,N(\varepsilon)$ form
a partition of $I.$

\begin{lem}\label{large integer values}
Let  $E $  be a BFS satisfying condition \eqref{min}, let $E'$ have AC-norm, and suppose that  $u\in E'(I)$ and $v\in E(I)$. Then the number $N(\varepsilon)$ is a non-increasing function of $\varepsilon$ that takes on every sufficiently large integer value.
\end{lem}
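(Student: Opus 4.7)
The plan is to view the points $a_i = a_i(\varepsilon)$ in the construction as continuous, strictly increasing functions of $\varepsilon$, deduce from this that $N(\varepsilon)$ is non-increasing, and then show that jumps of $N$ have size exactly one; combined with $N(\varepsilon)\to\infty$ as $\varepsilon\to 0^+$, this will yield the claim. First, I will show by induction on $i$ that, whenever $a_i(\varepsilon)<b$, the map $\varepsilon\mapsto a_i(\varepsilon)$ is continuous and strictly increasing. For $i=1$ this follows from the continuity (Lemma~\ref{equivalencenorms}) and strict monotonicity (Lemma~\ref{strictly decreasing and increasing}) of $x\mapsto\|T_{a,(a,x)}\|$. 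For $i\geq 2$, the equation $\widehat{\A}(a_{i-1}(\varepsilon),a_i(\varepsilon))=\varepsilon$ determines $a_i(\varepsilon)$ uniquely via the strict monotonicity of $\widehat{\A}$ (strictly increasing in the right endpoint, strictly decreasing in the left, by Lemma~\ref{strictly decreasing and increasing}) together with the continuity of $\widehat{\A}$ in both arguments (noted at the end of the proof of that lemma) and the inductive hypothesis.

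The non-increasing property of $N$ then follows directly: if $\varepsilon_1<\varepsilon_2$ and $a_i(\varepsilon_1)=b$, i.e.\ $\widehat{\A}(a_{i-1}(\varepsilon_1),b)\leq\varepsilon_1$, then the previous step gives $a_{i-1}(\varepsilon_2)\geq a_{i-1}(\varepsilon_1)$, and the decreasing property of $\widehat{\A}(\cdot,b)$ yields $\widehat{\A}(a_{i-1}(\varepsilon_2),b)\leq\varepsilon_1<\varepsilon_2$, so $a_i(\varepsilon_2)=b$ and $N(\varepsilon_2)\leq N(\varepsilon_1)$. For the unboundedness $N(\varepsilon)\to\infty$ as $\varepsilon\to 0^+$, I argue by contradiction: if $N(\varepsilon_k)\leq M$ along some sequence $\varepsilon_k\to 0^+$, diagonal extraction of the $a_i(\varepsilon_k)\in[a,b]$ produces limit points $a=\tilde a_0\leq\cdots\leq\tilde a_M=b$ containing at least one non-degenerate sub-interval on which $\widehat{\A}$ vanishes in the limit, contradicting the strict positivity of $\widehat{\A}$ on non-degenerate sub-intervals (a consequence of $|\{u=0\}|=|\{v=0\}|=0$, used already in Lemma~\ref{strictly decreasing and increasing}).

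To see that each jump of $N$ has size exactly one, fix $\varepsilon_0$ with $N(\varepsilon_0)=n$. If $\widehat{\A}(a_{n-1}(\varepsilon_0),b)<\varepsilon_0$ strictly, continuity forces $N$ to be locally constant at $\varepsilon_0$. Otherwise $\widehat{\A}(a_{n-1}(\varepsilon_0),b)=\varepsilon_0$, and then for $\varepsilon$ just below $\varepsilon_0$ the point $a_n(\varepsilon)<b$ is defined with $a_n(\varepsilon)\to b$ as $\varepsilon\to\varepsilon_0^-$. Since $\widehat{\A}(x,b)\leq\|T_{(x,b)}\|\leq\|u\chi_{(x,b)}\|_{E'}\|v\|_E\to 0$ as $x\to b^-$ (by $u\in E'(I)$ and the AC-norm of $E'$), we deduce $\widehat{\A}(a_n(\varepsilon),b)<\varepsilon$ for all $\varepsilon$ sufficiently close to $\varepsilon_0$ from the left, so $a_{n+1}(\varepsilon)=b$ and $N(\varepsilon)=n+1$. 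The main delicate step is this jump analysis, which has to combine continuity of $a_n(\varepsilon)$ at the left limit with the endpoint vanishing of $\widehat{\A}$. Being non-increasing, integer-valued, unbounded as $\varepsilon\to 0^+$, and having only jumps of size one, $N(\varepsilon)$ attains every sufficiently large integer value.
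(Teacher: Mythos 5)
Your proof is correct, but it is organized quite differently from the paper's. The paper (following Lemma 6.11 of \cite{LE}) first obtains monotonicity of $N$ by moving the first breakpoint and invoking Lemma \ref{strictly decreasing and increasing}, and then proves that every sufficiently large integer is attained by a supremum-and-perturbation argument: it sets $\varepsilon_1=\sup\{\varepsilon<\varepsilon_0:\,N(\varepsilon)\geq N(\varepsilon_0)+1\}$ and shows $N(\varepsilon_1)=N(\varepsilon_0)+1$ by assuming $N(\varepsilon_1)\geq N(\varepsilon_0)+2$ and perturbing the whole partition near $b$ to manufacture an $\varepsilon_2>\varepsilon_1$ with $N(\varepsilon_2)\geq N(\varepsilon_0)+2$, contradicting the supremum. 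You instead treat the breakpoints $a_i(\varepsilon)$ as implicitly defined functions of $\varepsilon$, prove they are continuous and strictly increasing, and then read off the monotonicity of $N$ together with a local jump analysis showing each discontinuity of $N$ has size one. Both arguments rest on the same ingredients: strict monotonicity and continuity of $\widehat{\A}$ in each endpoint (Lemmas \ref{equivalencenorms} and \ref{strictly decreasing and increasing}) and the vanishing of $\widehat{\A}(x,b)$ as $x\to b^-$ coming from the AC-norm of $E'$. What your version buys is a cleaner and more quantitative picture (right-continuity of $N$, unit jumps, and an explicit compactness proof that $N(\varepsilon)\to\infty$, which the paper leaves implicit); what it costs is that you need \emph{joint} (not merely separate) continuity of $\widehat{\A}(c,d)$ in $(c,d)$ in order to pass to the limit in $\widehat{\A}(a_{n-1}(\varepsilon),a_n(\varepsilon))=\varepsilon$. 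This does hold, because the modulus-of-continuity bounds in the proof of Lemma \ref{equivalencenorms} are uniform in the other endpoint, but you should state it explicitly, since the paper only records continuity in each variable separately.
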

\begin{proof} As in the proof of Lemma 6.11 of \cite{LE},  fix   $c,\,\,a<c<b.$  We have $\|T_{a,(a,c)}\|=\varepsilon_{0}>0$ and there is a positive integer $N(\varepsilon_{0})$ and a partition
$a=a_{0}<a_{1} <...<a_{N(\varepsilon_{0})}=b$ such that $\|T_{a,(a,a_{1})}\|=\varepsilon_{0},$
$\widehat{\A}(a_{i-1},a_{i})=\varepsilon_{0}$ for $i=2,...,N(\varepsilon_{0})-1,\,\,\widehat{\A}(a_{N(\varepsilon_{0})-1},b)\leq\varepsilon_{0}.$ Let $d\in (a,c).$
According to Lemma \ref{strictly decreasing and increasing},  $\widehat{\A}(a,d)=\varepsilon'_{0}<\varepsilon_{0}$ and the procedure
outlined above applied with $\varepsilon'_{0}$ gives $\infty>N(\varepsilon'_{0})\geq N(\varepsilon_{0}).$
By continuity of $\widehat{\A}(c,\cdot)$ and $\|T_{a,(a,\cdot)}\|,$ there exists $d\in(a,c)$ such that $N(\varepsilon'_{0})>N(\varepsilon_{0}).$ If $N(\varepsilon'_{0})=N(\varepsilon_{0})+1,$ stop.
Otherwise, define
$$
\varepsilon_{1}=\sup\{\varepsilon:\,\,\,0<\varepsilon<\varepsilon_{0}\,\,\,\mbox{and}\,\,N(\varepsilon)\geq N(\varepsilon_{0})+1\}.
$$
We claim $N(\varepsilon_{1})=N(\varepsilon_{0})+1.$  Indeed suppose $N(\varepsilon_{1})\geq N(\varepsilon_{0})+2$
and the partition $a=a_{0}<...<a_{N(\varepsilon_{1})}=b$ satisfies $\|T_{a,(a,a_{1})}\|=\varepsilon_{1}$   and $\widehat{\A}(a_{i},a_{i+1})=\varepsilon_{1}$ $i=1,2....,N(\varepsilon_{1})-1$ and $\widehat{\A}(a_{N(\varepsilon_{1})-1},a_{N(\varepsilon_{1})})\leq \varepsilon_{1}.$
Decrease $a_{N(\varepsilon_{1})-1}$ slightly to  $a'_{N(\varepsilon_{1})-1}$ so that $\widehat{\A}(a'_{N(\varepsilon_{1})-1},b)<\varepsilon_{1}$  and
$\widehat{\A}(a_{N(\varepsilon_{1})-2},a'_{N(\varepsilon_{1})-1})>\varepsilon_{1},$ continuing the process to get a partition of
$(a,b)$ having $N(\varepsilon_{1})$ intervals such that $\|T_{a,(a,a_{1})}\|>\varepsilon_{1},\,\,\,\widehat{\A}(a'_{i-1},\,a'_{i})>\varepsilon,\,\,i=2,...,N(\varepsilon_{1})-1$ and
$\widehat{\A}(a_{N(\varepsilon_{1})-1},b)<\varepsilon_{1}.$ Taking $\varepsilon_{2}\leq \min\{\|T_{a,(a,a_{1})}\|,\widehat{\A}(a'_{i-1},\,a'_{i});\,\,i=2,...,N(\varepsilon_{1}-1)\}$ we obtain $\varepsilon_{2}>\varepsilon_{1}$ and $N(\varepsilon_{2})\geq N(\varepsilon_{0})+2$, a contradiction. An inductive argument completes the proof.
\end{proof}

From Lemma \ref{large integer values}, Lemma \ref{strictly decreasing and increasing} and continuity of $\widehat{\A}(c,\cdot)$ and
$\|T_{a,(c,\cdot)}\|$ the next lemma follows.

\begin{lem}\label{there exist varepsilon}
Let  $E $  be a BFS satisfying condition \eqref{min} , let $E'$ have AC-norm, and suppose that  $u\in E'(I)$ and $v\in E(I)$ . Then for each $N>1$ there exist $\varepsilon_{N}$ and a sequence
 $a=a_{0}<a_{1}<....<a_{N}=b$ such that $\widehat{\A}(a_{i-1},a_{i})=\varepsilon_{N}$ for $i=2,...,N$ and $\|T_{a,(a,a_{1})}\|=\varepsilon_{N}.$
\end{lem}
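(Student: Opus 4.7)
The plan is to apply the intermediate value theorem to a fixed-point equation. Fix $N > 1$ and, using Lemma \ref{large integer values}, choose $\varepsilon_0 > 0$ small enough that the iterative construction starting at $a$ produces strictly more than $N$ intervals. For each $\varepsilon \in (0, \varepsilon_0]$, I define points $a_0(\varepsilon) = a < a_1(\varepsilon) < \cdots < a_{N-1}(\varepsilon) < b$ inductively by requiring $\|T_{a,(a,a_1(\varepsilon))}\| = \varepsilon$ and $\widehat{\A}(a_{i-1}(\varepsilon), a_i(\varepsilon)) = \varepsilon$ for $i = 2, \ldots, N-1$. Existence, uniqueness, and continuous dependence on $\varepsilon$ of each $a_i(\varepsilon)$ follow, by induction on $i$, from the strict monotonicity and continuity of $x \mapsto \widehat{\A}(c,x)$ in Lemma \ref{strictly decreasing and increasing} and of $x \mapsto \|T_{a,(a,x)}\|$ in Lemma \ref{equivalencenorms}.

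Next I define $G(\varepsilon) = \widehat{\A}(a_{N-1}(\varepsilon), b) - \varepsilon$ on the subinterval of $(0, \varepsilon_0]$ on which $a_{N-1}(\varepsilon) < b$; the map $G$ is continuous by composition. As $\varepsilon \to 0^+$, every $a_i(\varepsilon)$ must collapse to $a$ (since each subinterval absorbs $\varepsilon$-worth of $\widehat{\A}$ and $\widehat{\A}(c,\cdot)$ is strictly increasing), so $\widehat{\A}(a_{N-1}(\varepsilon), b) \to \widehat{\A}(a,b) > 0$ and hence $G(\varepsilon) > 0$ for all small $\varepsilon$. As $\varepsilon$ increases toward the supremum $\varepsilon^*$ of values for which $a_{N-1}(\varepsilon) < b$, the point $a_{N-1}(\varepsilon)$ is pushed to $b$; combined with the AC-norm of $E$ and $E'$ and the bound $\widehat{\A}(J) \leq \|u\chi_J\|_{E'}\|v\chi_J\|_{E}$ arising from Lemma \ref{equivalencenorms}, this forces $\widehat{\A}(a_{N-1}(\varepsilon), b) \to 0$, so $G(\varepsilon) < 0$ near $\varepsilon^*$.

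Applying the intermediate value theorem to the continuous function $G$ produces a zero $\varepsilon_N$. Setting $a_i = a_i(\varepsilon_N)$ for $i = 1, \ldots, N-1$ and $a_N = b$ gives the required sequence: by construction $\|T_{a,(a,a_1)}\| = \varepsilon_N$ and $\widehat{\A}(a_{i-1}, a_i) = \varepsilon_N$ for $i = 2, \ldots, N-1$, while $\widehat{\A}(a_{N-1}, a_N) = \widehat{\A}(a_{N-1}(\varepsilon_N), b) = \varepsilon_N$ by the fixed-point relation $G(\varepsilon_N) = 0$.

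The main obstacle is carrying continuity through the inductive definition of the $a_i(\varepsilon)$ and rigorously justifying the two limit assertions above. Both tasks require the strict monotonicity statements of Lemma \ref{strictly decreasing and increasing} to show that small changes in $\varepsilon$ produce small displacements of each $a_i(\varepsilon)$, and the AC-norm hypothesis to ensure that $\widehat{\A}$ of a shrinking interval tends to $0$. Once these are in place, the IVT argument is routine.
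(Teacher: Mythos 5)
Your argument is correct, but it is organized differently from the paper's. The paper obtains this lemma as a corollary of Lemma \ref{large integer values}: one studies the counting function $N(\varepsilon)$ (non-increasing, attaining every sufficiently large integer), and at the transition value of $\varepsilon$ where $N(\varepsilon)$ first equals $N$ the slack inequality $\widehat{\A}(a_{N-1},b)\leq\varepsilon$ on the last interval is upgraded to equality by the sup-and-perturb argument built into the proof of Lemma \ref{large integer values}. You instead fix $N$, parametrize the partition by $\varepsilon$ through the shooting map $\varepsilon\mapsto(a_1(\varepsilon),\dots,a_{N-1}(\varepsilon))$, and apply the intermediate value theorem to the defect $G(\varepsilon)=\widehat{\A}(a_{N-1}(\varepsilon),b)-\varepsilon$ of the last interval. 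Both routes rest on exactly the same ingredients (strict monotonicity and continuity of $\widehat{\A}$ and of $\|T_{a,(a,\cdot)}\|$ from Lemmas \ref{equivalencenorms} and \ref{strictly decreasing and increasing}, and the vanishing of $\widehat{\A}$ on shrinking intervals via the AC-norm of $E'$), but yours is self-contained and in fact delivers the conclusion for every $N\geq 2$, whereas the counting-function lemma as stated only guarantees sufficiently large integer values; the paper's route has the advantage of reusing a lemma it needs anyway. Three small points to tighten: (i) continuity of $\varepsilon\mapsto a_i(\varepsilon)$ for $i\geq 2$ needs continuity of $\widehat{\A}(c,x)$ in the \emph{left} endpoint $c$ as well, which is available from Lemma \ref{strictly decreasing and increasing} but should be cited explicitly; (ii) the domain of $G$ could a priori terminate because some intermediate $a_j(\varepsilon)$ cannot be placed, not only because $a_{N-1}(\varepsilon)$ reaches $b$ --- this is repaired by observing that each $a_i(\varepsilon)$ is increasing in $\varepsilon$, so the first obstruction forces $a_{N-1}(\varepsilon)\to b$ anyway; (iii) only the AC-norm of $E'$ is hypothesized (and only it is needed, since $\widehat{\A}(x,b)\leq K\|u\chi_{(x,b)}\|_{E'}\|v\|_{E}\to 0$), so the appeal to an AC-norm of $E$ should be dropped.
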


Combining Lemmas \ref{lower estimate}-\ref{there exist varepsilon} we obtain the following theorem.

\begin{thm}\label{number equivalent}
Let   $E $ be a strictly convex  BFS  satisfying condition \eqref{min}, let  $E^{\ast}$  be   strictly convex  and $E'$ have AC-norm, and suppose that  Let $\|u\chi_{I}\|_{E'(I)}\|v\chi_{I}\|_{E(I)}<\infty.$ Then for each $N>1$ there exist $\varepsilon_{N}$ and a sequence $a=a_{0}<a_{1}<....<a_{N}=b$ such that $\A(a_{i-1},a_{i})=\varepsilon_{N}$ for $i=2,...,N$ and $\|T_{a,(a,a_{1})}\|=\varepsilon_{N}$
and
$$
a_{N}(T)\approx i_{N}(T)\approx\varepsilon_{N}.
$$
\end{thm}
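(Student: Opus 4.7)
The plan is to synthesise the three immediately preceding lemmas via the ordering between $s$-numbers recorded in Theorem~\ref{t1.4}. Concretely, for each $N>1$ I would begin by applying Lemma~\ref{there exist varepsilon} to extract a number $\varepsilon_N>0$ together with points $a=a_0<a_1<\cdots<a_N=b$ that satisfy $\widehat{\A}(a_{i-1},a_i)=\varepsilon_N$ for $i=2,\ldots,N$ and $\|T_{a,(a,a_1)}\|=\varepsilon_N$. Before proceeding I would invoke the remark immediately following Lemma~\ref{upper estimate} (or, alternatively, Lemma~\ref{strictlyestimate} together with \eqref{3.2} and \eqref{3.3}, which yields $\A(J)\approx\widehat{\A}(J)$ uniformly in $J$) to justify that both Lemma~\ref{lower estimate} and Lemma~\ref{upper estimate} may be applied with $\widehat{\A}$ in place of $\A$.

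With the partition in hand I would feed it into Lemma~\ref{lower estimate}, which despite its name produces an upper bound on the approximation numbers: the hypotheses $\widehat{\A}(a_{i-1},a_i)\leq\varepsilon_N$ (equality in fact) and $\|T_{a,(a,a_1)}\|\leq\varepsilon_N$ are satisfied with $\varepsilon=\varepsilon_N$, so one obtains $a_N(T)\leq C_1\varepsilon_N$. Symmetrically, the very same partition satisfies the reverse inequalities $\widehat{\A}(a_{i-1},a_i)\geq\varepsilon_N$ and $\|T_{a,(a,a_1)}\|\geq\varepsilon_N$ required by Lemma~\ref{upper estimate}, which then yields $i_N(T)\geq C_2\varepsilon_N$.

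Finally, invoking Theorem~\ref{t1.4} to get $a_N(T)\geq i_N(T)$ closes the sandwich
\[
C_2\,\varepsilon_N\;\leq\;i_N(T)\;\leq\;a_N(T)\;\leq\;C_1\,\varepsilon_N,
\]
which is precisely the asserted $a_N(T)\approx i_N(T)\approx\varepsilon_N$. No step in this assembly is genuinely delicate, since the substantive work has been buried in the preceding lemmas; the only feature on which the argument really hinges — and the feature that makes the conclusion non-vacuous — is that a \emph{single} partition can simultaneously realise equalities of all the $\widehat{\A}$ values and of $\|T_{a,(a,a_1)}\|$, so that the same $\varepsilon_N$ appears in both the upper bound on $a_N(T)$ and the lower bound on $i_N(T)$. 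That calibration is exactly the content of Lemma~\ref{there exist varepsilon}, so the potential obstacle has already been dispatched, and what remains is pure bookkeeping.
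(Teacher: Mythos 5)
Your proposal is correct and follows essentially the same route as the paper, which simply states that the theorem is obtained by combining Lemmas \ref{lower estimate}--\ref{there exist varepsilon} (together with the remark that $\widehat{\A}$ may replace $\A$ in those lemmas and the ordering $a_N(T)\geq i_N(T)$ from Theorem \ref{t1.4}). Nothing in your assembly deviates from the paper's intended argument.
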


\section{Asymptotic results} \label{s6}

\begin{thm}\label{epslon  equivalent}
Let  $E $  be a strictly convex BFS satisfying condition \eqref{min} and suppose it has AC-norm. Let  $E^{\ast}$  be strictly convex, let $E'$  has  AC-norm, and suppose that   $u\in E'(I)$ and $v\in E(I)$. Then there exist constants $C_{1}=C_{1}(E),C_{2}=C_{2}(E)>0$ such that  for the map $T:E\to E$
$$
C_{1}\int_{a}^{b}u(x)v(x)dx \leq\limsup\limits_{n\rightarrow\infty}N\varepsilon_{N}
 \leq \limsup\limits_{n\rightarrow\infty}N\varepsilon_{N}\leq  C_{2}\int_{a}^{b}u(x)v(x)dx
$$
\end{thm}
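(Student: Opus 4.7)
The plan is to reduce the general case to that of step-function weights via the AC-norm hypotheses, then apply the local estimate $\widehat{\A}(J,u,v)\approx uv|J|$ of Lemma \ref{localestimate} on each piece of constancy, and control the passage back to $(u,v)$ by means of the perturbation Lemmas \ref{approx1}--\ref{approx2}.

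\textbf{Step 1 (step-function approximation).} Since $E'$ has AC-norm and $u\in E'$, and analogously for $v\in E$, given $\delta>0$ one can choose a partition $\mathcal{P}_\delta=\{J_k\}_{k=1}^{M_\delta}$ of $I$ and simple functions $u_\delta=\sum_k u_k\chi_{J_k}$, $v_\delta=\sum_k v_k\chi_{J_k}$ with $\|u-u_\delta\|_{E'}<\delta$ and $\|v-v_\delta\|_E<\delta$.

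\textbf{Step 2 (compare partitions).} For each $N$ let $a=a_0<a_1<\cdots<a_N=b$ be the adaptive partition of Lemma \ref{there exist varepsilon}, so $\widehat{\A}(I_i,u,v)=\varepsilon_N$ for $i\geq 2$ and $\|T_{a,I_1}\|=\varepsilon_N$, with $I_i=(a_{i-1},a_i)$. Compactness of $T$ together with Theorem \ref{number equivalent} forces $\varepsilon_N\to 0$, so by the strict monotonicity of $\widehat{\A}$ in Lemma \ref{strictly decreasing and increasing} at most $M_\delta-1$ intervals $I_i$ cross a $J_k$-boundary (call these \emph{bad}); every remaining \emph{good} $I_i$ lies in a unique $J_{k(i)}$.

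\textbf{Step 3 (perturbation to step weights).} On a good $I_i\subset J_{k(i)}$, Lemma \ref{localestimate} applied with the constants $u_{k(i)},v_{k(i)}$ yields $\widehat{\A}(I_i,u_\delta,v_\delta)\approx u_{k(i)}v_{k(i)}|I_i|$. From Lemmas \ref{approx1}--\ref{approx2} (transferred from $\A$ to $\widehat{\A}$ via the equivalence in Lemma \ref{strictlyestimate}),
$$
\bigl|\widehat{\A}(I_i,u,v)-\widehat{\A}(I_i,u_\delta,v_\delta)\bigr|\leq C\bigl(\|u-u_\delta\|_{E'(I_i)}\|v\|_{E(I_i)}+\|v-v_\delta\|_{E(I_i)}\|u\|_{E'(I_i)}\bigr).
$$
Summing over $i$, the upper/lower $l$-estimates for $E$ and the corresponding $l'$-estimates for $E'$ (as noted beneath Definition \ref{def1.1}) provide a H\"older-type inequality for the two sequences $\{\|u-u_\delta\|_{E'(I_i)}\}$ and $\{\|v\|_{E(I_i)}\}$, yielding
$$
\sum_i\|u-u_\delta\|_{E'(I_i)}\|v\|_{E(I_i)}\leq C\|u-u_\delta\|_{E'}\|v\|_E\leq C\delta\|v\|_E,
$$
and symmetrically for the other term. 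The aggregate perturbation is $O(\delta)$, uniformly in $N$.

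\textbf{Step 4 (summation and limits).} Combining,
\begin{align*}
N\varepsilon_N &= \sum_{i\text{ good}}\widehat{\A}(I_i,u,v)+\sum_{i\text{ bad}}\varepsilon_N \\
&\leq \sum_{i\text{ good}}\widehat{\A}(I_i,u_\delta,v_\delta)+O(\delta)+M_\delta\varepsilon_N \\
&\leq C_2\sum_k u_k v_k|J_k|+O(\delta)+M_\delta\varepsilon_N \\
&= C_2\int_a^b u_\delta v_\delta\,dx+O(\delta)+M_\delta\varepsilon_N.
\end{align*}
Taking $\limsup_{N\to\infty}$ (so $M_\delta\varepsilon_N\to 0$) and then $\delta\to 0$ delivers the upper bound; the lower bound follows by reversing each estimate (using the lower half of $\approx$ in Lemma \ref{localestimate}).

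\textbf{Main obstacle.} The delicate issue is the uniform-in-$N$ control of the cumulative perturbation errors in Step 3. The H\"older-type summation on the dual sequence spaces $l$ and $l'$ is exactly what the joint upper/lower $l$-estimates on $E$ (with the induced estimates on $E'$) supply, which is precisely why the membership $E\in\mathcal{M}$ was imposed. Without that structure, the per-interval errors could accumulate with $N$ and wreck the asymptotic equivalence.
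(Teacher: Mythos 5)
Your upper bound follows the paper's own route almost exactly: step-function approximation of $u,v$ using the AC-norms, the adaptive partition of Lemma \ref{there exist varepsilon}, discarding the at most $M_\delta-1$ intervals that straddle a boundary of the partition of constancy, the perturbation Lemmas \ref{approx1}--\ref{approx2} together with Lemma \ref{localestimate} on the remaining intervals, and a H\"older-type summation of the error terms over the partition (a point you in fact make more explicit than the paper does). That half is sound.

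The problem is the lower bound, which you dispose of in a single clause (``the lower bound follows by reversing each estimate''). Reversing your chain only yields $N\varepsilon_N\geq c\sum_{i\ \mathrm{good}}u_{k(i)}v_{k(i)}|I_i|-O(\delta)=c\int_{\bigcup_{\mathrm{good}}I_i}u_\delta v_\delta\,dx-O(\delta)$, and this is \emph{not} $c\int_a^b u_\delta v_\delta\,dx-O(\delta)$: the contribution of the bad intervals to $\int u_\delta v_\delta$ is unaccounted for, and Lemma \ref{localestimate} cannot be invoked on a bad interval because $u_\delta,v_\delta$ are not constant there. To salvage your route you would need $\sum_{i\ \mathrm{bad}}\int_{I_i}u_\delta v_\delta\,dx\to0$ as $N\to\infty$ for fixed $\delta$, which in turn requires $\max_i|I_i|\to0$ --- plausible, but nowhere established in your argument or in the lemmas you cite. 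The paper avoids this entirely by a different device for the lower half: it \emph{refines} the adaptive partition $\{a_k\}$ by adjoining the endpoints of the intervals of constancy $W(j)$, obtaining $n\leq N+1+m$ intervals $J_i$, each of which lies inside a single $W(j)$ \emph{and} inside a single $I_k$. Then Lemma \ref{localestimate} applies to every $J_i$, giving $\int_I u_\delta v_\delta\,dx\leq C\sum_i\A(J_i,u_\delta,v_\delta)$; the perturbation lemmas pass back to $(u,v)$ at a total cost of $O(\delta)$; and the monotonicity of $\A$ under inclusion of intervals gives $\A(J_i,u,v)\leq C\varepsilon_N$ (respectively $\|T_{a,J_i}\|\leq\varepsilon_N$ for $J_i\subset I_1$), whence $\int_I uv\,dx\leq C\bigl((N+1+m)\varepsilon_N+O(\delta)\bigr)$. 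You need this refinement step, or an explicit substitute for it; as written, the left-hand inequality of the theorem is not proved.
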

\begin{proof} As in the proof of Theorem 6.3 of \cite{LE} we observe that for each  $\eta>0$ there exist nonnegative  step functions $u_{\eta},\,v_{\eta}$ on $I$ such that
$$
\|u-u_{\eta}\|_{E'(I)}<\eta,\,\,\,\|u-v_{\eta}\|_{E(I)}<\eta.
$$
We may suppose that
$$
u_{\eta}=\sum_{j=1}^{m}\xi_{j}\chi_{W(j)},\,\,\,v_{\eta}=\sum_{j=1}^{m}\eta_{j}\chi_{W(j)}
$$
where  $W(j)$ are closed subintervals of $I$ with disjoint interiors and $I=\cup_{j=1}^{m}W(j).$

Let $N$ be an integer greater than $1.$ By Lemma \ref{there exist varepsilon} there exist $\varepsilon_{N}>0$ and a sequence
$a_{k},\,k=0,1,...,N,$ such that $a_{0}=a,\,\,a_{N}=b$ and
$$
\widehat{\A}(I_{i})=\varepsilon=\varepsilon_{N}\,\,\,
\mbox{for}\,\,\,i=2,...,N\,\,\mbox{and}\,\,\|T_{a,I_{1}}\|=\varepsilon\,\,\mbox{where}\,\,I_{k}=(a_{k-1},a_{k}).
$$

We have
\begin{align}
\left|\int_{I}u_{\eta}(t)v_{\eta}(t)dt-\int_{I}uv\right |&\leq\int_{I}u(t)|v(t)-v_{\eta}(t)|dt+\int_{I}|u(t)-u_{\eta}(t)|v_{\eta}(t)dt \notag \\
&\leq\|u\|_{E'}\|v-v_{\eta}\|_{E}+\|u-u_{\eta}\|_{E'}\|v\|_{E} \notag \\
&\leq \eta(\|u\|_{E'}+\|v\|_{E}+\eta).\label{5.1}
\end{align}

Let $K=\{k>1:\,\,\mbox{there exists}\,\,j\,\,\mbox{such that}\,\,I_{k}\subset W(j)\}.$  Then $\# K\geq N-1-m,$ and by Lemmas \ref{approx1}-\ref{localestimate},

\begin{align*}
(N-1-m)\varepsilon &\leq C_{1}\sum_{k\in K}\widehat{\A}(I_{k},u,v)\\
&\leq C_{2}\sum_{k\in K}\A(I_{k},u,v)\\
&\leq C_3\sum_{k\in K}\Big\{\A(I_{k},u_{\eta},v_{\eta})\\
&\hskip+1cm+(\A(I_{k},u,v)-\A(I_{k},u_{\eta},v))\\
&\hskip+1cm+ (\A(I_{k},u_{\eta},v)- \A(I_{k},u_{\eta},v_{\eta}))\Big\}\\
&\leq C_{4}\sum_{j}\Big\{|\xi_{j}||\eta_{j}||W(j)|\\
&\hskip+1cm +\|u-u_{\eta}\|_{E'(W(j))}\|v\|_{E(W(j))}\\
&\hskip+1.5cm + \|v-v_{\eta}\|_{E(W(j))}\|u_{\eta}\|_{E'(W(j))}\Big\}\\
&\leq C _{4}\Big(\int_{I}u_{\eta}(t)v_{\eta}(t)dt+\eta\|v\|_{E}+\eta(\|u\|_{E'}+\eta)\Big).
\end{align*}
 
By \eqref{5.1}we conclude that

$$
\limsup\limits_{N\rightarrow\infty}N\varepsilon_{N}\leq C_{4}\Big (\int_{I}u(t)v(t)dt+2\eta\|v\|_{E}+2\eta\|u\|_{E'} +\eta^2\Big)
$$
and then
$$
\limsup\limits_{n\rightarrow\infty}N\varepsilon_{N}\leq C_{4}\int_{I}u(t)v(t)dt.
$$

To prove the opposite inequality we add the end-points of the intervals $W(j),$ $j=1,2,...,m$ to the $a_{k},\,k=0,1,...,N,$ to form
the partition $a=e_{0}<...<e_{n}=b,$ say, where $n\leq N+1+m.$ Note that each interval $J_{i}=(e_{k},e_{k+1})$ is a subinterval of some
$W(j)$ and hence $u_{\eta},v_{\eta}$ have constant values on each $J_{i}.$  Thus
\begin{align*}
\int_{I}u_{\eta}v_{\eta}dt&=\int_{I_{1}}u_{\eta}v_{\eta}dt+\int_{I\backslash I_{1}}u_{\eta}v_{\eta}dt\\
&\le C_{5}\Big(\sum\limits_{J_{i}\subset I_{i}}\|T_{a,J_{i},u_{\eta},v_{\eta}}\|+\sum\limits_{J_{i}\not \subset  I_{i}}\A(J_{i},u_{\eta},v_{\eta})\Big).
\end{align*}

We obtain
\begin{align*}
\sum\limits_{J_{i}\not \subset I_{i}}&\A(J_{i},u_{\eta},v_{\eta}) \\
&\le \sum\limits_{J_{i}\not \subset  I_{i}}\Big\{\A(J_{i},u.v)+(\A(J_{i},u_{\eta}.v)-\A(J_{i},u.v))\\
&\hskip+1cm +
(\A(J_{i},u_{\eta}.v_{\eta})-\A(J_{i},u_{\eta}.v))\Big\}\\
&\leq\sum\limits_{J_{i}\not \subset  I_{i}}\Big \{\A(J_{i},u.v)+\|u-u_{\eta}\|_{E'}\|v\|_{E}+\|u_{\eta}\|_{E'}\|v_{\eta}-v\|_{E}\Big\};
\end{align*}

analogously for $\|T_{a,J,u_\eta},v_{\eta}\|$ we have
\begin{align*}
\sum\limits_{J_{i}\subset I_{i}}&\|T_{a,J_{i},u_{\eta},v_{\eta}}\|\\
&\leq \sum\limits_{J_{i}\subset I_{i}}\Big\{\|T_{a,J_{i},u,v}\|+
(\|T_{a,J_{i},u_{\eta},v}\|-\|T_{a,J_{i},u,v}\|)\\
&\hskip+1cm +(\|T_{a,J_{i},u_{\eta},v_{\eta}}\|-\|T_{a,J_{i},u_{\eta},v}\|)\Big\}\\
&\le \sum\limits_{J_{i}\subset I_{i}}\Big\{\|T_{a,J_{i},u,v}\|+\|u-u_{\eta}\|_{E'}\|v\|_{E}+
\|u_{\eta}\|_{E'}\|v_{\eta}-v\|_{E}\Big\}.
\end{align*}
Hence, from $\|T_{a,J,u,v}\|\leq \varepsilon$ and $\A(J_{i},u,v)\leq C_{5}\varepsilon$
$$
\int_{I}u(t)v(t)dt\leq C_{6}((N+1+m)\varepsilon+3\eta\|v\|_{E}+\eta(3\|u\|_{E'}+\eta))
$$
and since $\eta>0$ is arbitrary the theorem follows.\end{proof}

{\it Proof of Theorem \ref{main result}} 
Combining Theorem \ref{number equivalent} and Theorem \ref{epslon  equivalent}  we obtain  the proof of Theorem \ref{main result}.
\hfill $\square$

\end{document}